\def\l{\leftarrow}
\def\Id{\operatorname{id}}
\let\cal\mathcal
\def\cD{{\cal D}}
\def\cF{{\cal F}}
\def\cK{{\cal K}}
\def\cM{{\cal M}}
\def\cP{{\cal P}}
\def\cS{{\cal S}}
\def\cW{{\cal W}}
\let\blb\mathbb
\def \PP{{\blb P}}
\newcommand{\iso}{{\;\stackrel{_\sim}{\to}\;}}
\newcommand{\se}[1]{\begin{equation*}\begin{split}#1\end{split}\end{equation*}}
\newcommand{\onto}{\twoheadrightarrow}
\newcommand{\C}{\mathbb{C}}
\newcommand{\N}{\mathbb{N}}
\newcommand{\Z}{\mathbb{Z}}
\newcommand{\R}{\mathbb{R}}
\newcommand{\Tr}{\textrm{Tr}}
\newcommand{\cycle}{\circlearrowright}
\newcommand{\<}{\langle}
\renewcommand{\>}{\rangle}
\newcommand{\spl}{\mathsf{split}}
\newcommand{\coev}{\mathsf{coev}}
\newcommand{\Aut}{\ensuremath{\mathsf{Aut}}}
\newcommand{\Out}{\ensuremath{\mathsf{Out}}}
\newcommand{\Inn}{\ensuremath{\mathsf{Inn}}}
\newcommand{\Mod}{\ensuremath{\mathsf{Mod}}}
\newcommand{\Bimod}{\ensuremath{\mathsf{Bimod\,}}}
\newcommand{\Span}{\ensuremath{\mathsf{Span}}}
\newcommand{\Ext}{\mathsf{Ext}}
\font\ef = eufb10
\newcommand{\ideal}[1]{\ensuremath{\text{\ef{#1}}}}
\newcommand{\Symm}{\ideal{S}}
\newcommand{\Sym}{\operatorname{Sym}}
\newtheorem{lemma}{Lemma}[section]
\newtheorem{proposition}[lemma]{Proposition}
\newtheorem{theorem}[lemma]{Theorem}
\newtheorem{corollary}[lemma]{Corollary}
\theoremstyle{definition}
\newtheorem{example}[lemma]{Example}
\newtheorem{definition}[lemma]{Definition}
\newtheorem{claim}{Claim}
\theoremstyle{remark}
\newtheorem{remark}[lemma]{Remark}
\newtheorem{notation}{Notation$\!\!$}
\newcommand{\Mat}{\texttt{Mat}}
\newcommand{\Hom}{\textrm{Hom}}
\newcommand{\Image}{\textrm{Im}}
\newcommand{\SL}{\ensuremath{\mathsf{SL}}}
\newcommand{\id}{\mathbf{1}}
\newcommand{\e}{\varepsilon}
\renewcommand{\t}[1]{\textnormal{#1}}
\numberwithin{equation}{section}
\title{Superpotentials and Higher Order Derivations}
\author{Raf Bocklandt}
\address{Raf Bocklandt\\University of Antwerp\\Middelheimlaan 1\\B-2020 Antwerpen (Belgium)}
\email{rafael.bocklandt@ua.ac.be}
\thanks{The first author is a Postdoctoral Fellow of the Fund for Scientific Research - Flanders (Belgium)}
\author{Travis Schedler}
\address{Travis Schedler\\Department of Mathematics\\ 5734 S University Ave\\ Chicago, IL 60637}
\email{trasched@math.uchicago.edu}
\author{Michael Wemyss}
\address{Michael Wemyss\\Department of Mathematics\\University Walk\\Bristol BS8 1TW (UK)}
\email{M.Wemyss@bristol.ac.uk}
\begin{document}
\begin{abstract}
  We consider algebras defined from quivers with relations that are
  $k$-th order derivations of a superpotential, generalizing results
  of Dubois-Violette to the quiver case.  We give a construction
  compatible with Morita equivalence, and show that many important
  algebras arise in this way, including McKay correspondence algebras
  for $GL_n$ for all $n$, and four-dimensional Sklyanin algebras.
  More generally, we show that any $N$-Koszul, (twisted) Calabi-Yau
  algebra must have a (twisted) superpotential, and construct its
  minimal resolution in terms of derivations of the (twisted)
  superpotential.  This yields an equivalence between $N$-Koszul
  twisted Calabi-Yau algebras $A$ and algebras defined by a
  superpotential $\omega$ such that an associated complex is a
  bimodule resolution of $A$.  Finally, we apply these results to
  give a description of the moduli space of four-dimensional Sklyanin
  algebras using 
  the Weil representation of $\SL_2(\Z/4)$.
\end{abstract}

\maketitle

\section{Introduction}\label{intro}

Let $Q$ be a quiver (with set of edges also denoted by $Q$), and
$\C Q$ its path algebra.  This means that $Q$ is an oriented graph,
and $\C Q$ is the algebra with $\C$-linear basis given by paths in the
graph, with multiplication given by concatenation of paths (setting
$p \cdot q = 0$ if $p$ and $q$ cannot be concatenated).

If $p$ and $q$ are paths we define the partial
derivative of $q$ with respect to $p$ as
\[
 \partial_p q := \begin{cases}
                  r &\text{if $q=pr$,}\\
          0 &\text{otherwise.}
                 \end{cases}
\]
We can extend this operation linearly to get a map
$\partial_p:\C Q \to \C Q$.  Note that if $p=e$ is a trivial path
(i.e. a vertex) then taking the derivative is the same as
multiplication on the left: $\partial_e q=eq$.

Similarly to \cite{dbvfirst, dbv}, we define the
\emph{derivation-quotient algebra} of $\omega\in \C Q$ of order $k$ as
the path algebra modulo the derivatives of $\omega$ by paths with
length $k$:
\begin{equation} \label{cdokdfn}
\cD(\omega,k) := \C Q/\<\partial_p \omega; |p|=k\>.
\end{equation}
  We are particularly interested in $\omega$ which
are super-cyclically symmetric, i.e., are a sum of elements of the
form
\begin{equation} \label{scsymm}
\sum_{i=1}^n (-1)^{(i-1)(n+1)} a_i a_{i+1} \cdots a_n a_1 a_2 \cdots a_{i-1}, \quad a_i \in Q.
\end{equation}
Such $\omega$ will be called \emph{superpotentials}.  We will also consider
\emph{twisted superpotentials}. These are elements in $(\C Q)_n$ that are invariant under the $\C$-linear map
\begin{equation}
a_1\cdots a_n \mapsto (-1)^{n-1}\sigma(a_n)a_1\cdots a_{n-1}.
\end{equation}
where $\sigma$ is a $\C$-algebra automorphism of $\C Q$ which fixes
the trace function
$\displaystyle \C Q \mathop{\twoheadrightarrow}^{\varepsilon} \C^I
\mathop{\rightarrow}^{\Tr} \C$.
(If $\sigma$ is trivial one recovers the notion of a superpotential. We
need twisted superpotentials to address the McKay correspondence for
$GL_n$, as opposed to $\SL_n$.)

For the case $k=1$ in \eqref{cdokdfn}, algebras defined by
superpotentials have been greatly studied.  Examples include
3-dimensional Sklyanin algebras \cite{AS}, algebras coming from the
$3$-dimensional McKay correspondence \cite{GB, CT}, and algebras
derived from exceptional collections on Calabi-Yau varieties
\cite{aspin}.  The fact that all these algebras have a superpotential
can be traced back to a common homological property: the Calabi-Yau
property. In one of its forms, this property states that an algebra is
CY-$n$ if it has a resolution $\cP^\bullet$ as bimodule over itself
that is self-dual:
\begin{equation} \label{cycond}
 \Hom_{A-A}(\cP^\bullet, A\otimes_\C A) \cong \cP^{n-\bullet}.
\end{equation}
Similarly, one has the twisted Calabi-Yau property, where the
resolution is self-dual to a twist of itself by an automorphism $\sigma$ of $A$:
\[
 \Hom_{A-A}(\cP^\bullet, A\otimes_\C A) \cong \cP^{n-\bullet} \otimes_A A^{\sigma},
\]
where $A^{\sigma}$ is the bimodule obtained from $A$ by twisting the
right multiplication by $\sigma$ ($a \cdot x \cdot b = ax\sigma(b)$,
for $x \in A^{\sigma}$, $a,b \in A$). 

It is known that graded
3-dimensional Calabi-Yau algebras always derive from a superpotential
\cite{bocklandt}, i.e., are of the form \eqref{cdokdfn} with $k=1$.
Also, in \cite[Theorem 3.6.4]{GB}, a wide class of Calabi-Yau algebras
of any dimension are shown to arise from a much more general type of
superpotential.

In \cite{dbvfirst,dbv}, in the one-vertex case (working over a field), these
results were generalized to higher-order derivations.  
In particular, \cite[Theorem 11]{dbv} implies that any AS-Gorenstein algebra
over a field which is also Koszul is equal to $\cD(\omega, k)$ for
some $\omega, k$ (more generally, this is shown replacing Koszul with
$N$-Koszul, a generalization to the case of algebras is presented by
homogeneous relations of degree $N$ rather than two \cite{berger}).
We recall that a graded algebra over a field $k$ is AS-Gorenstein if
\[
\Ext_A^i(k,k)\cong \begin{cases}
                    0 & i\ne n\\
		    k &	i=n
                   \end{cases}
\]
It is clear by extending the proof of proposition 4.3 in
\cite{taillefer} that graded twisted Calabi-Yau algebras over a field
are AS-Gorenstein.  In the other direction, the result of
Dubois-Violette implies that, in the Koszul case, AS-Gorenstein
algebras are twisted Calabi-Yau.  If we work over a general semisimple
algebra $S$ instead of $k$, then the same relation holds between
twisted Calabi-Yau and the AS-Gorenstein property with $k$ replaced by $S$.
In this paper, we will therefore use the twisted Calabi-Yau condition.

One of the main goals of this paper is to generalize \cite{dbv} to the
several-vertex case.  Precisely, we give a Morita-invariant
construction of algebras $\cD(\omega, k)$ over any semisimple
$\C$-algebra (\S \ref{cofree}).  Using this, we show that algebras
which occur in the higher-dimensional McKay correspondence also derive
from a superpotential (\S \ref{mckay}).  We give a method to compute
the superpotential for the path algebra with relations which is Morita
equivalent to $\C[V]\# G$ and illustrate this with some examples.
These results generalize those of Crawley-Boevey and Holland
\cite{CBHolland} \cite{dmvlect} and Ginzburg \cite{GB} in the cases
$G=GL_2, GL_3$.  We then prove (\S \ref{koszul}) that any $N$-Koszul,
(twisted) Calabi-Yau algebra over a semisimple algebra is of the form
$\cD(\omega, k)$, where $\omega$ is a (twisted) superpotential.  This
last theorem generalizes \cite[Theorem 11]{dbv} to the quiver case,
and gives another proof of the fact that McKay correspondence algebras
are given by a (twisted) superpotential.  More generally, we show
that $N$-Koszul twisted Calabi-Yau algebras are equivalent to algebras
$A = \cD(\omega, k)$ such that an associated complex \eqref{ucplx} yields
a bimodule resolution of $A$.

We end by illustrating this theorem in the case of Sklyanin algebras
of dimension 4 (\S \ref{sklyanin}), which was the main motivating
example behind Section \ref{koszul}. We give a formula for the
superpotential (which was done in \cite[\S 6.4]{dbv}, see also
\cite{dbvfirst}, in different language and over $\R$). In \S
\ref{modskly}, we describe the twisted superpotentials associated to
the algebras from \cite{Staff} related to the Sklyanin algebras. We
explain that the Sklyanin McKay-correspondence algebras involve
subgroups of the Heisenberg group over $\Z/4$.

As an application of our results, we give a simple
representation-theoretic computation of the moduli space of Sklyanin
algebras of dimension 4 (Theorem \ref{sklymod}).
This description involves considering the projective space of
superpotentials. Since the automorphism group of a generic Sklyanin is
a form of the Heisenberg group over $\Z/4$ equipped with the
Heisenberg representation (which is uniquely determined by the action
of its center), we are able to find a version of the Weil
representation acting on superpotentials.  Pulling this back, we
obtain a description of the moduli space in terms of the original
parameters for the Sklyanin algebras.

We remark that, while it is probably possible to obtain this result
using the geometry associated to Sklyanin algebras (an elliptic curve
and a point of that curve), and our result can also be deduced from,
e.g., \cite[Proposition 3.1, \S 9]{con2}, it is interesting that the
theorem follows purely from representation-theoretic consequences of
the action of the Heisenberg group by automorphisms on the Sklyanin
algebra.

\subsection{Acknowledgements}
We thank M. Dubois-Violette for kindly pointing out to us his paper
\cite{dbv} and other references, for suggesting to consider
$N$-Koszulity, and answering many questions.  We also thank R. Hadani
for useful discussions about the Weil representation.  The second and
third authors would like to thank the University of Antwerp for
hospitality while part of this work was done.

\section{Coordinate-free potentials}\label{cofree}
In this section we formulate potentials, derivations, and
$\cD(\omega, k)$ in a categorical way for a tensor algebra
over a semisimple algebra.
\subsection{Duals, Duals, Duals...}\label{duals}
Let $S$ be a finite-dimensional semisimple algebra over $\C$ and let $V$ be an $S$-bimodule.
There are at least 4 distinct way to construct a dual bimodule to $V$:
\begin{itemize}
 \item The space of linear morphisms to $\C$: $V^* := \Hom_{\C}(V,\C)$ with bimodule action is $(s_1\psi s_2)(w)= \psi(s_2 w s_1)$.
 \item The space of right-module morphisms to $S$: $V^{*R} := \Hom_{\Mod S}(V,S)$ with  bimodule action is
$(s_1\psi s_2)(w)= s_1\psi(s_2 w )$.
 \item The space of left-module morphisms to $S$: $V^{*L} := \Hom_{S-\Mod}(V,S)$ with bimodule action is
$(s_1\psi s_2)(w)= \psi(w s_1)s_2$.
 \item The space of bimodule morphisms to $S \otimes_{\C} S$: $V^{*B} := \Hom_{\Bimod S}(V,S \otimes_\C S)$. Using Sweedler notation, we write
$\psi \in V^{*B}$ as $\psi_1 \otimes \psi_2$, with bimodule action
$(s_1\psi s_2)_1(w)\otimes (s_1\psi s_2)_2(w)=\psi_1(w)s_2\otimes s_1\psi_2(w)$.
\end{itemize}
These duals extend all to $4$ contravariant functors $*,*R,*L,*B:\Bimod S \to \Bimod S$.
All these different constructions are not canonically isomorphic in the category of $S$-bimodules, so
in order to identify them we need an extra datum. This extra datum is a nondegenerate trace function
on $S$. This function $\Tr: S \to \C$ allows us define natural isomorphisms $L,R,B$ from the complex dual to the
the $3$ other duals by demanding that for $\psi \in V^*$
\[
\forall w \in V:  \psi(w) = \Tr R\psi (w) = \Tr L\psi (w) = \Tr((B\psi)_1(w))\Tr((B\psi)_2(w)).
\]
Moreover, these identifications are compatible with Morita
equivalence: if $e \in S$ is an idempotent such that $SeS = S$, then
the trace on $S$ restricts to a nondegenerate trace on $eSe$.  The
images of the identification maps under the Morita equivalence
$\cM: \Bimod S \to \Bimod eSe $ are precisely the identification maps
of the restricted trace.  

From now on we will fix a trace on $S$ and omit the functors.  For
$\psi \in V^*$ and $w \in V$, we will denote the canonical pairing by
\[
 [\psi w] = [w \psi] = \psi(w).
% \<\psi,x\> = \<x,\psi\> = \psi(x) \text{ while }[\psi x]:= R\psi(x)\text{ and }[x\psi]:= L\psi(x)
\]

This yields $S$-bimodule morphisms $[]: V^* \otimes_S V \to S$ and $[]: V \otimes_S V^* \to S$ called the evaluation maps. The duals of these maps are called the coevaluation maps:
\[
 \coev_R: S \to V\otimes_S V^* \text{ and } \coev_L: S \to V^*\otimes_S V
\]
Ve will write the image of $1$ under the coevaluation as formally as
\[
\coev_R(1) = \sum_{Rx} x \otimes_S x^* \text{ and }\coev_L(1) = \sum_{Lx} x^* \otimes_S x
\]
These elements satisfy the following standard evaluation-coevaluation identities:
\se{
\forall \zeta \in V^*: \zeta &= \sum_{Rx} [\zeta x]x^* = \sum_{Lx} x^*[x\zeta]\\
\forall u \in V: u &= \sum_{Rx} x[x^*u] = \sum_{Lx} [ux^*]x.
}

The bracket notation can be extended to tensor products of $V$ and $V^*$ to obtain maps $[]:V^{*\otimes k} \times V^{\otimes l} \to V^{\otimes l-k}$ (for $l \geq k$)
such that
\[
[\phi_1\otimes \dots \otimes \phi_k~w_1\otimes \dots \otimes w_l]=[\phi_1[\phi_2 \dots [\phi_k w_1] \dots w_{k-1}]w_k]\cdot w_{k+1}\otimes \dots \otimes w_{l},
\]
and similarly $[]: V^{\otimes l} \times V^{*\otimes k}  \to V^{\otimes l-k}$.
If $k=l$ we end up with an element in $S$ and we can take the trace to obtain a pairing $\<,\>$ between $V^{*\otimes k}$ and $V^{\otimes k}$.
For $k>l$, we may replace the image $V^{\otimes l-k}$ by $V^{*\otimes k-l}$. 
These satisfy associativity identities, e.g., 
$[(\phi \otimes \psi)x] = [\phi [\psi  x]]$ and $[[\phi x]\psi]=[\phi[x\psi]]$
if $\psi \in V^{*\otimes k}$, $\phi \in V^{*\otimes l}$ and $x \in V^{\otimes n}$ with $n\ge k+l$.

\subsection{Potentials}
A \emph{weak potential} of degree $n$ is an element of degree $n$ in the tensor algebra $T_SV$ that commutes with the $S$-action:
\[
\omega \in  V^{\otimes n} \text{ such that }\forall s \in S: s\omega=\omega s.
\]

A weak potential is called a \emph{superpotential} if
\[
 \forall \psi \in V^*: [\psi \omega] = (-1)^{n-1}[\omega\psi].
\]

Let $\tau$ be a graded $\C$-algebra automorphism of $T_SV$ that keeps the trace invariant. This gives us an automorphism of $S$ as a $\C$-algebra, and we can define for any bimodule $M$ the left twist ${}_\tau M$ to be the vector space $M$ equipped with the bimodule action $s_1\cdot x \cdot s_2 := s_1^\tau x s_2$. The right twist $M_\tau$ is defined analogously.  We obtain isomorphisms ${}_{\tau^{-1}} S \cong S_\tau, {}_{\tau^{-1}} V \cong V_{\tau}$ using $\tau$, and
${}_{\tau^{-1}} V^* \cong (V^*)_{\tau}$ using $\tau^*$.

We then define a \emph{twisted weak potential} of degree $n$ to be an element
\[
\omega \in V^{\otimes n}  \text{ such that }\forall s \in S: s^\tau\omega=\omega s.
\]

A twisted superpotential is an element $\omega$ satisfying
\[
 \forall \psi \in V^*: [\psi^{\tau^*} \omega] = (-1)^{n-1}[\omega\psi].
\]

For every (twisted) weak potential $\omega$ and every $k$, we can define a bimodule morphism
\[
\Delta_k^\omega: (V^{\otimes k})^* \otimes S_\tau \to V^{\otimes n-k}: \psi \otimes x \to [\psi \omega x].
\]
We will denote the image of $\Delta_k^\omega$ by $W_{n-k} \subset V^{\otimes n-k}$.

\begin{definition} \label{cddfn}
  We define the derivation-quotient algebra of $\omega$ of order $k$ to be the
path algebra modulo the ideal generated by the $S$-bimodule $W_{n-k}$:
\[
\cD(\omega,k) := \C Q/\<\Image \Delta^\omega_k\> =\C Q/\<W_{n-k}\>.
\]
Here, $\<M\>$ stands for the smallest two-sided ideal containing $M$.
\end{definition}

\subsection{Path Algebras and Quivers}\label{quivers}
Let us look at all these concepts in case of a path algebra of a quiver.
A quiver $Q$ consists of a set of vertices $Q_0$ a set of arrows $Q_1$ and two maps $h,t: Q_1\to Q_0$ assigning to every
arrow its head and tail. We define $S=\C^{Q_0}$ where the vertices form a basis of idempotents, we equip it with a trace $\Tr$ such that all vertices have trace $1$.
We construct the $S$-bimodule  $V = \C^{Q_1}$ such that for every arrow $a$ we have the identity $a=h(a)at(a)$.
The path algebra can now be seen as $\C Q := T_SV$. Note that with this notation, the composition of the arrows is given by
\[
ab = \stackrel{a}{\l}\stackrel{b}{\l}.
\]

The basis $\{a\}$ gives us a dual basis $\{a^*\}$ for $V^*$, and these bases can be tensored to get dual basis for the space of (co)paths of length $k$: $\C Q_k = V^{\otimes k}$ and $V^{*\otimes k}$. The brackets have the following form:
\se{
\<a^*,b\> = \delta_{ab}, ~[a^*b] = \delta_{ab}t(b), \text{ and }[ba^*] = \delta_{ab}h(b).
}
More generally, if $p,q$ are paths, then we obtain that bracketing corresponds to taking partial derivatives:
\[
 \partial_p q = [p^*q].
\]
A weak potential is an element in $\C Q_k$ that consists only of closed paths (i.e. $h(p)=t(p)$) and $\Delta_k^\omega$ corresponds to the map $(\C Q_k)^* \to \C Q_{d-k}: p^* \to \partial_{p}\omega$.
It is a superpotential if $[a^*\omega]=(-1)^{n-1}[\omega a^*]$ which is the same as saying that $\vec\omega = (-1)^{n-1}\omega$, where  $\vec\omega$ denotes the cyclic shift: $\vec{a_1\dots a_n}=a_na_1\dots a_{n-1}$.

If $\tau$ is an automorphism of $\C Q$ then a twisted weak potential consist of a linear combination of paths $p$ that
satisfy $h(p)=\tau(t(p))$. It is a twisted superpotential if $[a^{\tau *}\omega]=(-1)^{n-1}[\omega a^{*}]$ which is the same as saying that $\vec\omega^\tau = (-1)^{n-1}\omega$, where $\vec\omega^\tau$ is the twisted cyclic shift: $\vec{a_1\dots a_n}^\tau=a_n^{\tau}a_1\dots a_{n-1}$.

\subsection{Morita Equivalence}\label{morita}
The new formulation has the advantage that it is compatible with standard Morita equivalence:
\begin{lemma}
Let $e \in S$ be an idempotent such that $SeS=S$. If $M \subset T_SV$ is an $S$-bimodule then there is a Morita equivalence between $A = T_S V/\<M\>$ and
\[
T_{eSe} eVe/\<eMe\>
\]
and if $\omega$ is a (twisted) weak potential and $e^\tau=e$ then we have that
\[
e\cD(\omega,k)e = \cD(e\omega e,k)
\]
\end{lemma}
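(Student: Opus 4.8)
The plan is to reduce everything to a single monoidal statement about the Morita functor $\cM\colon \Bimod S \to \Bimod eSe$, $M \mapsto eMe$, and then track what it does to tensor algebras, ideals, and the contraction brackets. First I would record the basic monoidal property: because $SeS = S$ one may write $1 = \sum_i s_i e t_i$ in $S$, and this expression produces mutually inverse bimodule maps between $e(V \otimes_S W)e$ and $eVe \otimes_{eSe} eWe$ for all $S$-bimodules $V,W$. Thus $\cM$ is monoidal, and applying it degreewise to $T_S V = \bigoplus_n V^{\otimes_S n}$ yields an algebra isomorphism $e(T_S V)e \cong T_{eSe}(eVe)$, under which the degree-$(n-k)$ piece $eV^{\otimes (n-k)}e$ is identified with $(eVe)^{\otimes(n-k)}$.

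Next I would handle the ideals, which is the first half of the lemma. Since $SeS = S$ we have $AeA = A$ for $A = T_S V$, so $e$ is a full idempotent of $A$ (and of any quotient), and standard Morita theory gives the equivalence between $A/\<M\>$ and $e(A/\<M\>)e$ once the latter algebra is identified. It therefore suffices to prove the ideal identity $e\<M\>e = \<eMe\>$ inside $eAe$. One inclusion is immediate from $eMe \subseteq M$ (valid since $M$ is an $S$-subbimodule): $\<eMe\> = eAe(eMe)eAe \subseteq eAe\,M\,eAe \subseteq e(AMA)e = e\<M\>e$. For the reverse inclusion the point is that $M \subseteq S(eMe)S$, by writing each $m = \sum_{i,j} s_i\bigl(e\,t_i m s_j\,e\bigr)t_j$ using $1 = \sum s_i e t_i$ and observing $e\,t_i m s_j\,e \in eMe$. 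Hence $\<M\> = AMA \subseteq A(eMe)A$, and the flanking $e$'s of $eMe$ combine with the outer $A$'s to give $e\<M\>e \subseteq eAe(eMe)eAe = \<eMe\>$. Combined with the identification $eAe \cong T_{eSe}(eVe)$, this proves the first assertion.

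For the second assertion I would apply the first with $M = W_{n-k} = \Image \Delta_k^\omega$, so that $e\cD(\omega,k)e \cong T_{eSe}(eVe)/\<eW_{n-k}e\>$, and then identify $eW_{n-k}e$ with the corresponding object $\Image\Delta_k^{e\omega e}$ computed over $eSe$. This rests on the compatibility of the four duals and the evaluation/coevaluation maps with Morita equivalence recorded in \S\ref{duals}: under $\cM$ the dual $V^*$ goes to $(eVe)^*$ and the bracket $[\,]$ intertwines with the bracket over $eSe$, so $\cM(\Delta_k^\omega) = \Delta_k^{e\omega e}$ and the images correspond. I would also check that $e\omega e$ is again a (twisted) weak potential, resp. (twisted) superpotential: the $S$-centrality $s^\tau\omega = \omega s$ descends to $eSe$-centrality precisely because $e^\tau = e$, which is exactly what makes $\tau$ restrict to $eSe$ and carries the twist $S_\tau$ to $(eSe)_\tau$; the (super)symmetry condition $[\psi^{\tau^*}\omega] = (-1)^{n-1}[\omega\psi]$ then transports through the compatible brackets.

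I expect the main obstacle to be the compatibility of the contraction brackets with $\cM$ in the twisted setting: verifying that $\cM$ intertwines $\Delta_k^\omega$ with $\Delta_k^{e\omega e}$, and that the hypothesis $e^\tau = e$ is exactly what is needed for $\tau$, the twist $S_\tau$, and the twisted (super)potential conditions to descend to $eSe$. The purely ring-theoretic ideal identity and the monoidality of $\cM$ are routine once $1 = \sum s_i e t_i$ is exploited; the care lies in checking that all the dualities of \S\ref{duals} pass through the idempotent compression consistently with the $\tau$-twist.
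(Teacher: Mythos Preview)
Your proposal is correct and follows essentially the same approach as the paper: use that the Morita functor $M\mapsto eMe$ is monoidal (compatible with tensor products, direct sums, and duals) to transport the tensor algebra, the ideal $\langle M\rangle$, and the map $\Delta_k^\omega$ to the $eSe$-side. The paper's argument is terser, simply asserting the functorial compatibilities and writing out $e\Delta_k^\omega e = \Delta_k^{e\omega e}$ directly from the formula $[\,e\phi e\,\omega\,exe\,] = [\,e\phi\,e\omega e\,xe\,]$, whereas you supply the explicit $1=\sum s_i e t_i$ verification of $e\langle M\rangle e = \langle eMe\rangle$ and a more careful discussion of the role of $e^\tau=e$; these are exactly the details implicit in the paper's appeal to standard Morita equivalence.
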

\begin{proof}
By standard Morita equivalence between $S$ and $eSe$, we have a functor
\[
\cF:\Bimod S \to \Bimod eSe
\]
 which maps $M$ to $eMe$.
This functor commutes with tensor products
$\cF(M \otimes_S N) \cong  \cF(M) \otimes_{eSe} \cF(N)$ where $e(m\otimes_S n)e \mapsto eme\otimes_{eSe} ene$ is the natural isomorphism.
The same holds for duals and direct sums. This
implies that $\cF(T_S V)=eT_SVe \cong T_{eSe}eVe$ and if we have an $S$-sub-bimodule $M \subset  T_S V$ we also have that $\cF(M) \subset \cF(T_SV)$
and $\cF(\<M\>) = \<\cF(M)\>$. This gives us an isomorphism between
$T_{eSe} eVe/\<eMe\>$ and $eT_SV/\<M\>e$ which is Morita equivalent to $T_S V/\<M\>$.

Note that if we have a left $S$-module morphism between two bimodules $f:V_1 \to V_2$ we can consider this as an element in the bimodule $V_1^* \otimes_S V_2$. The map $\cF(f)$ can be identified with $efe \in \cF(V_1^* \otimes_S V_2)= \cF(V_1)^*\otimes \cF(V_2)$.
In the case of superpotentials we get $M = \Image \Delta_k^\omega $ and $\cF(M)= \cF(\Image \Delta_k^\omega) = \Image e \Delta_k^\omega e$
but
\[
\Delta_k^\omega : \phi \otimes x \mapsto [\phi \omega x] \text{ so }e\Delta_k^\omega e : e\phi e \otimes exe \mapsto [e\phi  e\omega e  x e]
\]
and hence $e \Delta_k^\omega e= \Delta_k^{e\omega e}$.
\end{proof}

\section{McKay correspondence and potentials}\label{mckay}

Let $G$ be any finite group, and let $V$ be an arbitrary finite
dimensional representation. We can look at the tensor algebra
$T_\C V^*$ and the ring of polynomial functions on $V$: $\C[V]$.
This last ring can be seen as the $n-2^{nd}$-derived algebra
coming from the superpotential:
\[
\omega = \sum_{\sigma \in \Symm_n} (-1)^\sigma
x_{\sigma(1)}\otimes \dots  \otimes x_{\sigma(n)} \in T_\C V^*.
\]
where $x_1\dots x_n$ form a basis for $V^*$. Indeed for every path
$p=x_{i_1}\dots x_{i_{n-2}}$ we get that $\partial_p \omega$ is
zero if some of the $x_{i_{\cdots}}$ are identical and otherwise
it is equal to the commutator between the two basis elements that
do not occur in $p$. We conclude
\[
 \C[V] \cong \cD(\omega, n-2).
\]

If $R$ is a ring with $G$ acting as automorphisms we can construct
the smash product $R \# G$. As a vector space this ring is
isomorphic to $R \otimes_\C \C G$ and the product is given by
\[
(r_1 \otimes g_1)\cdot (r_2 \otimes g_2) = r_1(g_1\cdot r_2)
\otimes g_1g_2.
\]

For the tensor algebra $TV^*$ we can rewrite the smash product as
a tensor algebra over the group algebra $\C G$. Let us define $U =
V^* \otimes_\C\C G$. The $\C G$-bimodule action on it is given by
\[
 g(v \otimes x)h := gv \otimes gxh.
\]
It is easy to see that for every $k$ we have \se{
 (T_\C V^* \# G)_k &\cong  V^* \otimes_\C \dots \otimes_\C V^* \otimes_\C \C G \\
&\cong (V^* \otimes_{\C} \C G) \otimes_{\C G} \dots \otimes_{\C G}
(V^* \otimes_{\C} \C G)= (T_{\C G} U)_k. } The special bimodule
action on $U$ makes the identifications also compatible with the
product, so that $T_\C V^* \# G \cong T_{\C G} U$. So the smash of
the tensor algebra is again a tensor algebra but now over the
semisimple algebra $\C G$. This algebra is isomorphic to
\[
 \bigoplus_{S_i} \Mat_{\dim S_i \times \dim S_i}(\C),
\]
where we sum over all simple representations of $G$. The standard
traces of this matrix algebras provide us a trace on $\C G$.

\begin{lemma}
If $R \cong T_\C V^*/\<M\>$ where $M$ is a vector space of
relations which is invariant under the $G$-action on $T_\C V^*$
then
\[
R \# G \cong T_{\C G} U / \< M \otimes_\C \C G\>.
\]
\end{lemma}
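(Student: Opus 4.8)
The plan is to deduce the statement from the algebra isomorphism $T_\C V^* \# G \cong T_{\C G} U$ established above, by observing that forming a smash product commutes with passing to quotients by $G$-stable ideals, and then matching the two ideals of relations across this identification.

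First I would record the general fact that if $R'$ is any $\C$-algebra carrying a $G$-action and $I \subset R'$ is a two-sided ideal stable under $G$, then $I \otimes_\C \C G$ is a two-sided ideal of $R' \# G$ and there is a natural algebra isomorphism $(R'/I)\# G \cong (R'\# G)/(I \otimes_\C \C G)$. This is immediate from the multiplication rule $(r_1 \otimes g_1)(r_2 \otimes g_2) = r_1(g_1\cdot r_2) \otimes g_1 g_2$: the $G$-stability of $I$ guarantees that $g_1 \cdot I \subseteq I$, so $I \otimes_\C \C G$ absorbs products on both the left and the right, and the vector-space identification $(R'/I) \otimes_\C \C G \cong (R' \otimes_\C \C G)/(I \otimes_\C \C G)$ is then compatible with the smash multiplication.

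Next I apply this with $R' = T_\C V^*$ and $I = \langle M\rangle$. Since $M$ is $G$-invariant by hypothesis, the two-sided ideal $\langle M\rangle$ it generates is again $G$-stable, so the previous observation yields
\[
R \# G = (T_\C V^*/\langle M\rangle)\# G \cong (T_\C V^* \# G)/(\langle M\rangle \otimes_\C \C G).
\]
Transporting this across $T_\C V^* \# G \cong T_{\C G} U$, it remains only to identify the image of $\langle M\rangle \otimes_\C \C G$ with the two-sided ideal $\langle M \otimes_\C \C G\rangle$ of $T_{\C G} U$.

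The step requiring the most care will be this final matching of ideals, though it reduces to a short computation in the smash product. One inclusion is clear: $M \otimes_\C \C G$ lies inside the two-sided ideal $\langle M\rangle \otimes_\C \C G$, whence $\langle M \otimes_\C \C G\rangle \subseteq \langle M\rangle \otimes_\C \C G$. For the reverse inclusion, a spanning element $pmq \otimes g$ of $\langle M\rangle \otimes_\C \C G$ (with $p,q \in T_\C V^*$, $m \in M$, $g \in G$) factors as $(p \otimes 1)(m \otimes 1)(q \otimes g)$ via the smash multiplication, where $1 \in G$ is the identity; as $m \otimes 1 \in M \otimes_\C \C G$, this exhibits $pmq \otimes g$ inside $\langle M \otimes_\C \C G\rangle$. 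Thus the two ideals coincide and the desired isomorphism follows. The only real subtlety is keeping track of the $G$-action through the bimodule structure on $U$, but this bookkeeping is precisely what is already encoded in the identification $T_\C V^* \# G \cong T_{\C G} U$.
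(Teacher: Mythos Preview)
Your proof is correct and follows essentially the same approach as the paper: both arguments use that a $G$-stable ideal $\mathfrak{i}$ in $T_\C V^*$ yields the ideal $\mathfrak{i}\otimes_\C \C G$ in the smash product, identify $\langle M\rangle \otimes_\C \C G$ with $\langle M\otimes_\C \C G\rangle$, and then read off the quotient through the identification $T_\C V^*\#G\cong T_{\C G}U$. Your version simply spells out in more detail the factorization $pmq\otimes g=(p\otimes 1)(m\otimes 1)(q\otimes g)$ that the paper leaves implicit.
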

\begin{proof}
If $M$ is a $G$-invariant vector space in $T_\C V^*$ then
$M\otimes_\C \C G$ can be considered as a $\C G$-subbimodule of
$T_\C V^* \# G$. This means that if $\ideal i \lhd T_\C V^*$ is a
$G$-invariant ideal then $\ideal i \otimes_\C \C G$ is an ideal of
$T_\C V^* \# G$. Moreover if $\ideal i =\<M\>$ with $M$ a
$G$-invariant subspace of $T_\C V^*$ then $\ideal i \otimes \C G
=\<M \otimes_\C \C G\>$. So
\[
\frac{T_{\C G} (V^*\otimes_\C \C G)}{\<M \otimes_\C \C G\>} =
\frac{(T_{\C} V^*) \otimes_\C \C G}{\<M\> \otimes_\C \C G}=
\frac{T_\C V^*}{\<M\>}\otimes_\C \C G = R \# G. \qedhere
\]
\end{proof}
Suppose $R=\C[V]$ with its action of $G$. Now $\C \omega \cong \wedge^n
V^*$ is a one-dimensional $G$-representation. This means that
$\wedge^n V^* \otimes_\C \C G$ is a bimodule of the form $\C
G^\tau$ where $\tau(g) = (\det g) g$ and hence the element $\omega
\otimes_\C 1$ is a twisted weak potential. It is easy to check
that
\[
(\Image\Delta_k^\omega) \otimes \C G = \Image ((\Delta_k^\omega)
\otimes \Id_{\C G}) = \Image(\Delta_k^{(\omega \otimes 1)}).
\]
Furthermore we see that the $\tau$ changes the blocks in $\C G$
coming from simple representations $S_i$ and $\wedge^n V^* \otimes
S_i$, therefore it is easy to find an $e=\sum e_i$ such that
$e^\tau=\tau$. Also $\Tr(e_i)=1$ just as we want it to be for a
path algebra.

Finally the twisted weak potential is a superpotential because the
original $\omega$ is, and the property of being a superpotential is
preserved by taking $\omega$ to $\omega \otimes_\C 1$.

We deduce the following main result:
\begin{theorem}\label{maintheorem}
The algebra $\C[V]\#G$ is a derivation-quotient algebra of order $n-2$ with
a (twisted if $G \not \subset \SL_n$) superpotential of degree $n$.
The same is true for the corresponding Morita equivalent path
algebra with relations.
\end{theorem}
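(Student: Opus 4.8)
The plan is to assemble the ingredients developed above into the two claimed isomorphisms, starting from the description $\C[V] \cong \cD(\omega, n-2)$. First I would record that $\C[V] = T_\C V^*/\langle M\rangle$, where $M = \Image \Delta_{n-2}^\omega$ is exactly the degree-two space of commutators identified in the discussion preceding the statement. Since $\omega$ is the antisymmetriser, it spans the one-dimensional representation $\wedge^n V^*$, i.e.\ it is a $\det$-semi-invariant for the $G$-action on $T_\C V^*$; as a subspace $\Image \Delta_{n-2}^\omega$ is therefore $G$-stable, because $G$-equivariance of the bracket gives $g\cdot\Image\Delta_{n-2}^\omega = \Image\Delta_{n-2}^{g\cdot\omega} = \Image\Delta_{n-2}^{(\det g)\omega} = \Image\Delta_{n-2}^\omega$ (the scalar $\det g$ does not move the image). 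Thus $M$ is a $G$-invariant space of relations and the smash-product lemma applies, yielding
\[
\C[V] \# G \;\cong\; T_{\C G} U / \langle M \otimes_\C \C G\rangle, \qquad U = V^* \otimes_\C \C G.
\]

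Next I would identify the right-hand side as a derivation-quotient algebra over the semisimple base $\C G$. Using the computation $(\Image \Delta_{n-2}^\omega) \otimes \C G = \Image \Delta_{n-2}^{\omega \otimes 1}$ recorded above, the ideal of relations becomes $\langle \Image \Delta_{n-2}^{\omega\otimes 1}\rangle$, so that
\[
\C[V]\# G \;\cong\; \cD(\omega \otimes 1,\, n-2)
\]
as algebras over $\C G$, with $\omega\otimes 1$ a weak potential of degree $n$ and derivation order $n-2$. It then remains to check the potential is of the claimed type. The bimodule $\wedge^n V^* \otimes_\C \C G$ is isomorphic to $\C G^\tau$ with $\tau(g) = (\det g)g$, which shows $\omega\otimes 1$ is a \emph{twisted} weak potential; and the super-cyclic symmetry of $\omega$ (equivalently $\vec\omega = (-1)^{n-1}\omega$) is preserved under $\omega \mapsto \omega\otimes 1$, so $\omega\otimes 1$ is a twisted superpotential. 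When $G \subset \SL_n$ one has $\det g = 1$ for all $g$, hence $\tau = \Id$ and the superpotential is untwisted; this accounts for the parenthetical in the statement.

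For the final sentence I would invoke the Morita lemma. I need an idempotent $e = \sum_i e_i \in \C G$ that is a sum of rank-one idempotents, one in each simple block, with $\Tr(e_i) = 1$ and $e^\tau = e$. The one real point to verify is the last condition: since $\tau$ amounts to tensoring by the character $\det$, it permutes the simple blocks of $\C G$ by $S_i \mapsto S_i \otimes \det$, a permutation of finite order; choosing the $e_i$ compatibly along the orbits of this permutation yields $e^\tau = e$. The Morita lemma then gives $e\,\cD(\omega\otimes 1, n-2)\,e = \cD(e(\omega\otimes 1)e, n-2)$, realising the Morita-equivalent path algebra with relations as the derivation-quotient algebra of the restricted (twisted) superpotential $e(\omega\otimes 1)e$, again of degree $n$ and order $n-2$. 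I expect the construction of this $\tau$-invariant $e$ --- tracking how $\det$-twisting permutes the blocks of $\C G$ so that the chosen minimal idempotents remain invariant --- to be the only step requiring genuine care; everything else is an assembly of the preceding lemmas and computations.
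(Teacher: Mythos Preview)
Your proposal is correct and follows essentially the same route as the paper: the paper's proof is precisely the discussion preceding the theorem, assembling the identification $\C[V]\cong\cD(\omega,n-2)$, the smash-product lemma, the identity $(\Image\Delta_{n-2}^\omega)\otimes\C G=\Image\Delta_{n-2}^{\omega\otimes 1}$, the twisted-superpotential check, and the Morita lemma for a $\tau$-invariant $e$. You actually supply slightly more detail than the paper in two places---the explicit reason $M$ is $G$-stable (via $g\cdot\omega=(\det g)\omega$) and the orbit-wise construction of $e$ under the permutation $S_i\mapsto S_i\otimes\det$---both of which the paper leaves as one-line assertions.
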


How do we work out the potential in terms of paths in the path
algebra? If $G$ is a finite group acting on $V$ then the quiver
underlying $e(\C[V]\#G) e$ is called the McKay quiver. Its vertices
$e_i$ are in one to one correspondence to the simple
representations $S_i$ of $G$. We can consider the $e_i$ as minimal
idempotents sitting in $\C G$ such that $e=\sum e_i$ and $S_i
\cong \C G e_i$. The trace function on $\C G$ then allows us to
identify $\C G^*$ with $\C G$ as $\C G$-bimodules: $\C G \to \C
G^*: g \mapsto \Tr(g\cdot -)$. Therefore $S_i^*$ is isomorphic to
$e_i\C G$ as a right module.

The number of arrows from $e_i$ to $e_j$ is equal to the dimension
of
\[
e_j V^*\otimes \C G e_i = \Hom_{\C G}(\C G e_j, (V^*\otimes \C
G)e_i)= \Hom_{\C G}(S_j, (V^*\otimes S_i)).
\]
This means that we can (and do) identify each arrow
$a:e_i\rightarrow e_j$ with a certain intertwiner morphism
$\psi_a: S_{h(a)} \to V^* \otimes S_{t(a)}$.

The set of arrows gives a basis of these intertwiner maps and we
have a dual basis $a^*$,  which can be interpreted as a collection of maps
\[
 \psi_{a^*}: S_{t(a)} \to V \otimes S_{h(a)},
\]
using the natural pairing between $\Hom_{\C G}(S_j, (V^*\otimes
S_i))$ and  $\Hom_{\C G}(S_i, (V\otimes S_j))$.

If we have a nontrivial twist, $\tau$, we make sure that the basis
we choose for the arrows is closed under the twist. We can do this
by tensoring the $\psi$-maps with the representation
$\wedge^nV^{(*)}$: \se{ \psi_{a^\tau}=\id_{\wedge^nV}
\otimes_\C\psi_{a^\tau}: \id_{\wedge^nV} \otimes_\C S_{h(a)} \to
V^* \otimes_\C \id_{\wedge^nV} \otimes S_{t(a)}. }

The composition of two arrows $ab$ gives rise to a composition
\[
\xymatrix@C=40pt{S_{h(ab)}\ar[r]^(0.4){\psi_b}&V^*\otimes
S_{t(b)=h(a)}\ar[r]^{\Id_{V^*}\otimes\psi_a}&V^*\otimes V^*\otimes
S_{t(ab)}}
\]
and
\[
\xymatrix@C=40pt{S_{t(ab)}\ar[r]^(0.4){\psi_{a^*}}&V\otimes
S_{h(a)=t(b)}\ar[r]^{\Id_{V}\otimes\psi_{b^*}}&V\otimes V\otimes
S_{h(ab)}}.
\]
In this way we can assign to every path $p$ of length $k$ a map
$\psi_p : S_{h(p)} \to V^{*\otimes k} \otimes S_{t(p)}$ and a map
$\psi_{p^*} : S_{t(p)} \to V^{\otimes k} \otimes S_{h(p)}$.

For every $k\leq n$ we have an antisymmetrizer: $\alpha^k :=
V^{\otimes k} \to \wedge^k V : v_1 \otimes \dots \otimes v_k
\mapsto v_1 \wedge \dots \wedge v_k$. If $p$ is a path of length
$n$ consider the map
\[
\xymatrix@C=40pt{ S_{t(p)}\ar[r]^(0.4){\psi_{p^*}}& V^{\otimes k}
\otimes S_{h(p)}\ar[r]^{\alpha^n \otimes \Id_{S_{h(p)}}}& \wedge^n
V \otimes S_{h(p)}}.
\]
The first factor in the image is a one-dimensional
$G$-representation which we can denote by $\det_V$. Taking the
tensor product with $\det_V$ induces a permutation on the simple
representations and hence on the vertices of the McKay quiver:
\[
 e_i = \tau(e_j) \iff S_{e_i}\cong S_{e_j} \otimes {\det}_V.
\]
By Schur's lemma $(\alpha^n \otimes \Id_{S_{h(p)}})\psi_{p^*}$ is
zero if $\tau(h(p))\ne t(p)$ and else it is a scalar; in both
cases we denote it by $c_p \in \C$.

These scalars allow us to write down an explicit form of the
superpotential. The weak potential $\omega\otimes 1$ in $T_{\C G}
(V^* \otimes \C G)$ acts as a linear function on $( \C G^* \otimes
V )^{\otimes_{\C G} n}=V^{\otimes n}\otimes \C G$: $(\omega\otimes
1)(v\otimes x)=\Tr(\omega(v)x)$. But, because $\omega \in \wedge^n
V^*$, we see that $\omega(v) = \alpha^n (v)$. Hence, if we apply
$\psi_{p^*}$, we get $(\omega\otimes
1)(\psi_{p^*})=c_p\,\, \t{dim\ }h(p)$. Because the Morita equivalence
between $\C G$ and $e\C G e$ is compatible with taking the dual, we
see that
\[
 e(\omega \otimes 1)e = \sum_{|p|=n }e(\omega \otimes 1)e(\psi_{p^*})p = \sum_{|p|=n }(c_p\,\, \t{dim\ }h(p))
 p:=\Phi
\]
and so $\C [V]\# G$ is Morita equivalent to
\[
\frac{T_{eSe} eUe}{\<\Image(\Delta_{n-2}e(\omega \otimes 1)e)\>}
\cong \frac{\C Q}{\<
\partial_q\Phi: q \t{ is path of length } n-2\>}.
\]

\section{Corollaries and Remarks}\label{corols}
In this section we show how the main result of the last section
recovers several known results in the literature. In particular we
show that for a finite subgroup of $\SL(2,\C)$, we recover the
preprojective algebra; for a finite small subgroup of $GL(2,\C)$ we
recover the mesh relations; and for a finite subgroup of $\SL(3,\C)$ we
recover the superpotential in Ginzburg \cite{GB}.  Furthermore if the
group is abelian in $GL(n,\C)$ we can also recover the toric result.

Recall our convention that when referring to quivers, $xy$ means
$y$ followed by $x$.

We start with the toric case: suppose $G$ is a finite abelian
subgroup of $GL(n,\C)$.  Being abelian we may choose a basis
$e_1,\hdots,e_n$ of $V$ that diagonalizes the action of $G$ and
thus we get $n$ characters $\rho_1,\hdots,\rho_n$ defined by
setting $\rho_i(g)$ to be the $i^{th}$ diagonal element of $g$. It
is clear that $e_i$ is a basis for the representation $\rho_i$.

In what follows it is convenient to suppress tensor product signs
as much as possible, so we write $\rho_{i,j}\rho$ for
$\rho_i\otimes\rho_j\otimes\rho$.   In this notation
$det_V=\rho_{1,\hdots,n}$.   Denote the set of irreducible
representations by $Irr(G)$.

\begin{corollary}[\cite{CMT1}]
Let $G$ be a finite abelian subgroup of $GL(n,\C)$. Then the McKay
quiver is the directed graph with a vertex for each irreducible
representation $\rho$ and an arrow
\[
\xymatrix{\rho_{i}\rho\ar[r]^{x_i^{\rho}} & \rho}
\]
for all $1\leq i\leq n$ and $\rho\in Irr(G)$.   Furthermore, the
path algebra of the McKay quiver modulo the relations
\[
\{ x_j^{\rho}x_i^{\rho_j\rho}=x_i^{\rho}x_j^{\rho_i\rho}: \rho\in
Irr(G), 1\leq i,j\leq n \}
\]
is isomorphic to the skew group ring $\C [V]\#G$.
\end{corollary}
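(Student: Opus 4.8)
The plan is to specialize the explicit description of $\C[V]\#G$ obtained at the end of Section~\ref{mckay} to the abelian case, where every simple representation $S_i$ is one-dimensional, so that $\operatorname{Irr}(G)$ is the character group and $V\cong\rho_1\oplus\cdots\oplus\rho_n$, $V^*\cong\rho_1^{-1}\oplus\cdots\oplus\rho_n^{-1}$. Since each $\dim S_i=1$, the idempotent $e=\sum_i e_i$ equals $1$ and $e\C G e=\C G$, so no Morita reduction is needed and $\C[V]\#G$ is \emph{itself} the derivation-quotient algebra $\cD(\Phi,n-2)$ of the McKay quiver, where $\Phi$ is the superpotential computed there (Theorem~\ref{maintheorem}). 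It therefore suffices to identify the McKay quiver and to compute $\partial_q\Phi$ for all paths $q$ of length $n-2$.

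First I would pin down the quiver. The number of arrows from $S_i$ to $S_j$ is $\dim\Hom_{\C G}(S_j,V^*\otimes S_i)$; writing $V^*\otimes S_i\cong\bigoplus_{k=1}^n \rho_k^{-1}\otimes S_i$ and using that all summands are one-dimensional characters, this space is one-dimensional precisely when $S_i\cong\rho_k\otimes S_j$ for a (then unique) $k$, and is zero otherwise. Hence, writing the target as $\rho$, there is exactly one arrow $x_k^{\rho}\colon\rho_k\rho\to\rho$ for each $k$ and each $\rho\in\operatorname{Irr}(G)$ (so that $h(x_k^\rho)=\rho$, $t(x_k^\rho)=\rho_k\rho$), which is the asserted quiver.

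Next I would make $\Phi$ explicit. In the formula $\Phi=\sum_{|p|=n}(c_p\,\dim h(p))\,p$ every $\dim h(p)=1$, and $c_p$ is the scalar by which $\alpha^n\circ\psi_{p^*}$ acts. Since $\alpha^n$ is the antisymmetrizer, $c_p=0$ unless the $n$ arrows of $p$ use the directions $x_1,\dots,x_n$ each exactly once; in that case $p$ is determined by its head $\rho=h(p)$ together with the order $\pi\in\Symm_n$ in which the directions occur, and $c_p=\pm(-1)^{\pi}$ is the sign inherited from $\omega=\sum_{\sigma\in\Symm_n}(-1)^{\sigma}x_{\sigma(1)}\cdots x_{\sigma(n)}$. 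Thus
\[
\Phi=\sum_{\rho\in\operatorname{Irr}(G)}\ \sum_{\pi\in\Symm_n}(-1)^{\pi}\,x_{\pi(1)}^{\rho}\,x_{\pi(2)}^{\rho_{\pi(1)}\rho}\cdots x_{\pi(n)}^{\rho_{\pi(n-1)}\cdots\rho_{\pi(1)}\rho},
\]
a signed sum over those $n$-cycles whose accumulated shift $\rho_{\pi(1)}\cdots\rho_{\pi(n)}$ equals $\det_V$; this is exactly the relation between head and tail under the $\det_V$-twist $\tau$, i.e.\ the statement that these paths are (twisted) closed.

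Finally I would compute the relations. For a path $q$ of length $n-2$, $\partial_q\Phi$ is nonzero only if $q$ itself uses $n-2$ distinct directions; let $\{i,j\}$ be the complementary pair and $\sigma=h(x_{\pi(n-1)}x_{\pi(n)})$ the tail of $q$, which depends only on $\rho=h(q)$ and $\{i,j\}$ via $\sigma=\rho\,\prod_{l\notin\{i,j\}}\rho_l$. Exactly two terms of $\Phi$ extend $q$, namely those completing it by the two orderings of $\{i,j\}$, and they carry opposite signs, so
\[
\partial_q\Phi=\pm\bigl(x_i^{\sigma}x_j^{\rho_i\sigma}-x_j^{\sigma}x_i^{\rho_j\sigma}\bigr).
\]
As $q$ ranges over all length-$(n-2)$ paths, $\sigma$ ranges over all of $\operatorname{Irr}(G)$ and $\{i,j\}$ over all pairs (the internal ordering of $q$ being irrelevant), so $\langle\partial_q\Phi\rangle$ is generated exactly by the relations $x_j^{\sigma}x_i^{\rho_j\sigma}=x_i^{\sigma}x_j^{\rho_i\sigma}$ of the statement, giving the claimed isomorphism. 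The hard part will be the bookkeeping in the previous paragraph: correctly extracting the scalars $c_p$ (both the antisymmetrizer signs and the $\det_V$-twist that fixes the vertex labels) and checking that $\partial_q\Phi$ collapses to a single two-term commutator independent of the internal ordering of $q$; once the signs and twist are nailed down, the rest is formal.
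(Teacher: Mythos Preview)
Your proposal is correct and follows essentially the same route as the paper's own proof: specialize Theorem~\ref{maintheorem} to the abelian case (where $e=1$, so no Morita reduction), read off the quiver from $V^*\otimes\rho\cong\bigoplus_k\rho_k^{-1}\rho$, note that the antisymmetrizer kills any path repeating a direction so that $\Phi$ is a signed sum over $\Symm_n$ and over $\rho$, and then differentiate by length-$(n-2)$ paths to get the commutator relations. Your write-up is in fact more explicit than the paper's about the final differentiation step (the paper simply asserts ``it is easy to see that differentiating $\Phi$ with respect to paths of length $n-2$ gives the required relations''), so nothing is missing.
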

\begin{proof}
The first statement regarding the McKay quiver is trivial since
$V=\rho_1\oplus\hdots\oplus\rho_n$.  Furthermore since $G$ is
abelian the idempotent $e$ in \S 3 is the identity and so we
really are describing the skew group ring up to isomorphism, not
just Morita equivalence.

For the relations, we build a potential as follows:  first recall
we have a basis $e_1,\hdots,e_n$ of $V$ (from which $e_i$ is a
basis for each $\rho_i$).  Since the $\rho_i$ generate the group
of characters this gives a basis for every representation. Now if
we view the map $x_i^{\rho}$ as an intertwiner
$\rho_{i}\otimes\rho\rightarrow V\otimes\rho$ it is clear that it
can be represented as simply $e_i\otimes v_\rho\mapsto e_i\otimes
v_\rho$ where $v_\rho$ is the basis element of $\rho$.

This means that if a path $p:det_V\otimes\rho\rightarrow \rho$ of
length $n$ contains two $x$'s with the same subscript then
$c_p=0$. Consequently, for any given $\rho\in Irr(G)$, the only
non-zero contributions to the potential from paths
$det_V\otimes\rho \rightarrow\rho$ of length $n$ come from
\[
\xymatrix{det_V\otimes\rho\ar[r]^(0.65){x_{\sigma(1)}^{\rho_{\sigma(2),\hdots,\sigma(n)}\rho}}&\rho_{\sigma(2),\hdots,\sigma(n)}\rho
\ar[r]^(0.65){x_{\sigma(2)}^{\rho_{\sigma(3),\hdots,\sigma(n)}\rho}}&\rho_{\sigma(3),\hdots,\sigma(n)}\rho\ar[r]
& \hdots\ar[r]&
\rho_{\sigma(n)}\rho\ar[r]^(0.65){x_{\sigma(n)}^{\rho}}& \rho }
\]
where $\sigma \in \Symm_n$.  Thus for each $\rho\in Irr(G)$ we
obtain a contribution to the potential
\[ \Phi_\rho:=\sum_{\sigma \in
\Symm_n}\alpha^n(e_{\sigma(1)}\otimes\hdots\otimes
e_{\sigma(n)})x_{\sigma(n)}^{\rho}x_{\sigma(n-1)}^{\rho_{\sigma(n)}\rho}\hdots
x_{\sigma(2)}^{\rho_{\sigma(3),\hdots,\sigma(n)}\rho}
x_{\sigma(1)}^{\rho_{\sigma(2),\hdots,\sigma(n)}\rho}.
\]
Adding these contributions one obtains the potential
$\Phi=\sum_{\rho\in Irr(G)}\Phi_\rho$.  It is easy to see that
differentiating $\Phi$ with respect to paths of length $n-2$ give
the required relations.
\end{proof}
As another corollary to Theorem \ref{maintheorem} we have
\begin{corollary}[Reiten-Van den Bergh \cite{Reiten-VdB}]\label{mesh} Suppose $G$ is a finite
subgroup of $GL(2,\C)$ without pseudoreflections. Then the
relations on the McKay quiver which give a Morita equivalence with
$\C[x,y]\# G$ are precisely the mesh relations from AR theory on
$\C[[x,y]]^G$ and the superpotential is exactly the sum of all
mesh relations.

In particular for a finite subgroup of $\SL(2,\C)$, the
preprojective algebra of the corresponding extended Dynkin diagram
is Morita equivalent to $\C[x,y]\# G$.
\end{corollary}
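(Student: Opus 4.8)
The plan is to specialize the explicit computation of the superpotential at the end of Section \ref{mckay} to the case $n=2$ and then to recognize the resulting relations as the mesh relations of Auslander--Reiten theory. First I would set $n=2$, so that $V$ is two-dimensional and the derivation-quotient algebra is $\cD(\Phi,n-2)=\cD(\Phi,0)$; since $n-2=0$, the relations are obtained by differentiating the degree-two superpotential $\Phi$ by paths of length zero, i.e.\ by the vertices, so the defining ideal is generated by the components $e_i\Phi$. By Auslander's form of the McKay correspondence for a small $G\subset\GL(2,\C)$, the McKay quiver underlying $e(\C[x,y]\#G)e$ is the Auslander--Reiten quiver of the category of maximal Cohen--Macaulay modules over $\C[[x,y]]^G$, and the vertex permutation $\tau$ induced by $-\otimes\det_V$ is the Auslander--Reiten translation; this is the framework in which the mesh relations live.

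Next I would compute $\Phi$ explicitly. For $n=2$ the map $\alpha^2\colon V\otimes V\to\wedge^2V$ is ordinary antisymmetrization, so by Schur's lemma the scalar $c_p$ attached to a length-two path $p=ab$ is nonzero only when $\tau(h(p))=t(p)$, that is when $S_{t(p)}\cong\det_V\otimes S_{h(p)}$. Tracking the duality identifications of Section \ref{duals} together with the coevaluation pairing between $\Hom_{\C G}(S_j,V^*\otimes S_i)$ and $\Hom_{\C G}(S_i,V\otimes S_j)$, one finds that antisymmetrization pairs each arrow $a$ with a distinguished partner running in the opposite sense, and that (up to the nonzero normalizations $\dim h(p)$) $\Phi$ is the sum of the products of these pairs. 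The twisted superpotential condition $\vec\Phi^{\,\tau}=-\Phi$ then forces the two orderings of each pair to enter with opposite signs.

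Having this form of $\Phi$, I would match $e_i\Phi$ with the mesh relation at $e_i$. Under Auslander's correspondence the arrows through $e_i$ are the irreducible morphisms of the almost split sequence
\[
0\to\det_V\otimes S_i\to V\otimes S_i\to S_i\to0,
\]
which is obtained by smashing with $G$ the degree-two Koszul relation $\omega\in V^*\otimes_\C V^*$ (the image of $\wedge^2V^*$) and applying the Morita equivalence of Section \ref{morita}. The mesh relation at $e_i$ is the vanishing of the composite from $\det_V\otimes S_i=\tau S_i$ to $S_i$ through the middle term $V\otimes S_i$, and comparing with the explicit $\Phi$ shows this composite is exactly $e_i\Phi$; summing over $i$ identifies $\Phi$ with the sum of all mesh relations. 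Finally, for $G\subset\SL(2,\C)$ the twist is trivial ($\det_V$ is the trivial character, so $\tau=\Id$), the McKay graph is the extended Dynkin diagram by McKay's theorem, the two arrows of each pair become the self-dual partners $a,a^*$ under $V\cong V^*$, and the mesh relations reduce to the preprojective relations $\sum_a(aa^*-a^*a)$; hence $\cD(\Phi,0)$ is the preprojective algebra of the extended Dynkin diagram.

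The step I expect to be the main obstacle is the bookkeeping in the second paragraph: one must show that the scalars $c_p$ produced by Schur's lemma, after unwinding the four duality functors of Section \ref{duals} and the normalization by $\dim h(p)$, match the structure constants of the Auslander--Reiten sequences exactly, including the relative signs dictated by antisymmetrization. In particular one must verify that the doubling of arrows in the McKay quiver corresponds to the $a\leftrightarrow a^*$ pairing used in the mesh and preprojective relations, rather than merely counting dimensions, and this is where the compatibility of the coevaluation pairing with the twist $\tau$ must be used carefully.
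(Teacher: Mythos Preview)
Your overall strategy is sound and overlaps with the paper's, but the route you take diverges in an important way and carries an acknowledged gap.

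The paper does \emph{not} attempt any direct computation of the scalars $c_p$. Instead it passes to the completion $R=\C[[x,y]]$, smashes the Koszul resolution $0\to R\otimes\det_V\to R\otimes V\to R\to\C\to0$ with $\C G$, and decomposes it according to the simples $\sigma_i$. Then it invokes the equivalence $\mathfrak{proj}\,R\#G\simeq\mathfrak{CM}\,R^G$ given by $M\mapsto M^G$ (Yoshino, \cite[10.9, 10.13]{Yoshino1}) to identify each resulting short exact sequence with the Auslander--Reiten sequence ending at $(R\otimes\sigma_i)^G$. The relations on $e(R\#G)e$ are therefore the mesh relations \emph{by definition}, because the composition of the two maps in an AR sequence is zero. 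Finally, since the mesh relations are graded, taking the associated graded transports the conclusion back from the completion to $\C[x,y]$.

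Your second paragraph --- the explicit pairing of arrows via antisymmetrization and the tracking of signs through the duality functors --- is exactly the step the paper's argument is designed to avoid, and you rightly flag it as the main obstacle. Your third paragraph is very close to the paper's argument, but you assert without proof that the smashed Koszul sequence $0\to\det_V\otimes S_i\to V\otimes S_i\to S_i\to0$ \emph{is} the almost split sequence; this is precisely the content of Yoshino's result and requires the passage to the complete local ring, which you omit. If you strengthen paragraph~3 by inserting the completion step and the citation to Yoshino, you can drop paragraph~2 entirely: the identification of $e_i\Phi$ with the mesh relation then follows from the fact that both encode the same vanishing composite, with no need to match the individual $c_p$ by hand.
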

\begin{proof}
We will work out the proof in the completed case and then go back
by taking the associated graded ring. Denote by $R=\C[[x,y]]$ the
ring of formal power series in two variables and consider the
Koszul complex over R
\[
\xymatrix{0\ar[r]&R\otimes det_V\ar[r]& R\otimes V\ar[r]& R\ar[r]&
\C\ar[r]&0}.
\]
We know this comes from a superpotential.  We proved that the
algebra obtained by smashing with a group $G$ also comes from a
(possibly twisted) superpotential, so
\[
\xymatrix{0\ar[r]&R\otimes det_V\otimes \C G\ar[r]& R\otimes
V\otimes \C G\ar[r]& R\otimes \C G\ar[r]& \C G\ar[r]&0}
\]
(which is the minimal projective resolution of the $R\# G$ module
$\C G$) arises from a superpotential, i.e. the relations on $R\#
G$ can be read off from the fact that the composition of the first
two maps is zero.

For convenience label the members of $Irr(G)$ by
$\sigma_0,\sigma_1,\hdots,\sigma_n$ where $\sigma_0$ corresponds
to the trivial representation.  Since $\C
G=\oplus_{i=0}^{n}\sigma_i^{\oplus dim(\sigma_i)}$ the above exact
sequence decomposes into
\[
\oplus_{i=0}^{n}(\xymatrix@C=15pt{0\ar[r]&R\otimes det_V\otimes
\sigma_i\ar[r]& R\otimes V\otimes \sigma_i\ar[r]& R\otimes
\sigma_i\ar[r]& \sigma_i\ar[r]&0})^{\oplus dim(\sigma_i)}
\]
so really the relations on $R\# G$ can be read off from the fact
that the composition of the first two maps in each summand is
zero.  But now \cite{Yoshino1}[10.9]
\[
\begin{array}{rcl}
\mathfrak{proj} R\# G & \approx & \mathfrak{CM} R^G\\
M&\mapsto& M^G
\end{array}
\]
is an equivalence of categories, where $\mathfrak{proj} R\# G$ is
the category of finitely generated projective $R\# G$ modules, and
$\mathfrak{CM} R^G$ is the category of maximal Cohen-Macaulay
modules for $R^G$. Thus taking $G$-invariants of the above exact
sequence, the relations on $R\# G$ can be read off from the fact
that the composition of the first two maps in each summand of
\[
\oplus_{i=0}^{n}(\xymatrix@C=10pt{0\ar[r]&(R\otimes det_V\otimes
\sigma_i)^G\ar[r]& (R\otimes V\otimes \sigma_i)^G\ar[r]& (R\otimes
\sigma_i)^G\ar[r]& \sigma_i^G\ar[r]&0})^{\oplus dim(\sigma_i)}
\]
is zero. It is clear that $\sigma_i^G=0$ for $i\neq 0$ whilst
$\sigma_0^G=\C$.  But now by \cite{Yoshino1}[10.13] for $i\neq 0$
the summands above are precisely the AR short exact sequences, and
for $i=0$ the sequence has the appropriate AR property. Thus the
relations on $eR\# Ge$ are precisely the mesh relations.

Because the mesh relations are graded and taking the associated
graded is compatible with the Morita equivalence we can conclude
that the relations of $e\C[x,y]\# G e$ are also given by the mesh
relations and the superpotential will be the sum of all mesh
relations.
\end{proof}

Because we work with superpotentials there is a redundancy in the
coefficients of the potential:
\begin{lemma}\label{cylic_invariant}
Chose a basis for the arrows in $\C Q=e(T_\C V \# G)e$ that is
closed under the application of the twist $\tau$. Then the
coefficients of  $e(\omega \otimes 1)e=\sum_{|p|=n} c_p p$ have
the following property: if $p=p_1\hdots p_n$ is a path of length
$n$ then
\[
c_{p_1\hdots p_n}=(-1)^{n-1} c_{p_n^\tau p_1p_2\hdots p_{n-1}}
\]
\end{lemma}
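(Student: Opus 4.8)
The plan is to deduce the coefficient identity from the single structural fact that $e(\omega\otimes 1)e$ is a \emph{twisted superpotential} in the path algebra $\C Q = e(T_\C V\# G)e$, and then to read the symmetry of the $c_p$ off directly from the defining twisted cyclic identity recorded in \S\ref{quivers}. So the argument splits into a conceptual step (identifying $e(\omega\otimes 1)e$ as a twisted superpotential) and a bookkeeping step (extracting coefficients from the cyclic identity).

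First I would establish that $e(\omega\otimes 1)e$ is a twisted superpotential. In \S\ref{mckay} it is shown that $\omega\otimes 1$ is a twisted weak potential for the twist $\tau(g)=(\det g)g$, and that it is in fact a twisted superpotential, since $\omega$ itself is a superpotential and that property survives the passage $\omega\mapsto\omega\otimes 1$. It then remains to check that the twisted superpotential property is preserved under the Morita operation $M\mapsto eMe$. This is exactly where the condition $e^\tau=e$ (guaranteed by the choice of $e$ in \S\ref{mckay}) is used: the functor $\cF=e(-)e$ of \S\ref{morita} is a tensor functor compatible with duals and evaluation, and because $e^\tau=e$ it also commutes with the twist $\tau$. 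Hence it carries the defining relation $[\psi^{\tau^*}(\omega\otimes 1)]=(-1)^{n-1}[(\omega\otimes 1)\psi]$ to the corresponding relation for $e(\omega\otimes 1)e$; equivalently, since $\cF$ intertwines the twisted cyclic shift with itself, it sends $\vec{(\omega\otimes 1)}^\tau=(-1)^{n-1}(\omega\otimes 1)$ to $\vec{e(\omega\otimes 1)e}^\tau=(-1)^{n-1}e(\omega\otimes 1)e$. I expect this verification to be the main obstacle, as it requires checking that every bracket operation used to phrase the (twisted) superpotential condition commutes with $\cF$; once this is granted the rest is purely formal.

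Granting this, the characterization from \S\ref{quivers} applies verbatim. Writing $\omega':=e(\omega\otimes 1)e=\sum_{|p|=n}c_p\,p$, the twisted superpotential condition is $\vec{\omega'}^\tau=(-1)^{n-1}\omega'$, where $\vec{a_1\cdots a_n}^\tau=a_n^\tau a_1\cdots a_{n-1}$. Substituting the expansion yields the identity
\[
\sum_{|p|=n} c_p\, p_n^\tau p_1\cdots p_{n-1}=(-1)^{n-1}\sum_{|p|=n} c_p\, p
\]
in $\C Q$, which is the only input needed for the final step.

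Finally I would compare coefficients. Because the basis of arrows is chosen closed under $\tau$, the automorphism $\tau$ merely permutes the arrows, so the map $p=p_1\cdots p_n\mapsto \vec{p}^\tau=p_n^\tau p_1\cdots p_{n-1}$ is a bijection on the set of length-$n$ paths: from $\vec p^\tau$ one recovers $p_1,\dots,p_{n-1}$ directly and $p_n=(p_n^\tau)^{\tau^{-1}}$. Thus each path $\vec p^\tau$ arises on the left-hand side only from $p$, and equating the coefficient of $\vec p^\tau$ on the two sides gives $c_p=(-1)^{n-1}c_{p_n^\tau p_1\cdots p_{n-1}}$, which is the claimed relation. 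The composability of $\vec p^\tau$ (i.e.\ that it is again a genuine closed-up path appearing in a twisted weak potential) is automatic since $\tau$ respects heads and tails, so no separate check is required there.
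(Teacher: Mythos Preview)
Your proof is correct and follows essentially the same route as the paper: the paper's proof is the one-liner ``This follows immediately from Theorem~\ref{maintheorem} and the discussion in subsection~\ref{quivers},'' and you have simply unpacked those two citations---Theorem~\ref{maintheorem} supplies that $e(\omega\otimes 1)e$ is a twisted superpotential (your first step, including the Morita-compatibility check via $e^\tau=e$), and \S\ref{quivers} gives the twisted-cyclic-shift characterization from which you read off the coefficient identity (your second and third steps). Your explicit verification that Morita equivalence preserves the twisted superpotential property is a welcome elaboration of something the paper leaves implicit.
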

\begin{proof}
This follows immediately from Theorem \ref{maintheorem} and the
discussion in subsection \ref{quivers}.
\end{proof}

Note that if $G\leq \SL(n,\C)$, the twist is trivial so we can work
with any basis for the arrows. In this case, not only does the
above lemma simplify the calculation of the $c_p$, but it also
tells us that we can write our superpotential up to cyclic
permutation. This generalizes a result of Ginzburg \cite{GB} for
$\SL(3,\C)$.

Note that care has to be taken when translating between our fully
written superpotentials and the more compact versions in terms of
cyclic notation.  For example if $u$ is a non-trivial path of
length 1 which forms a cycle at some vertex, then if in our
potential we have $c_{u\hdots u}u\hdots u$ (where there are $n \in
2\N+1$ $u$'s) then in cyclic notation this should be written as
$\frac{c_{u\hdots u}}{n}(u\hdots u)$ since the cycle counts the
element $n$ times.

The superpotential highly depends on the representatives we chose
for the arrows in $Q$. From the point of view of the quiver we
have an action of
\[
Aut_{\C Q_0} \C Q = \prod_{i,j \in Q_0} GL(i(\C Q)_1j).
\]
on the space $(\C Q)_n$ and all potentials that give an isomorphic
derivation-quotient algebra are in the same orbit. An interesting open
question is whether there one can always find a nice
representative for the superpotential.

\section{Examples of McKay Correspondence Superpotentials}\label{examples}
In this section we illustrate Theorem \ref{maintheorem} by computing
examples.  We first illustrate that our theorem does not depend on
whether or not $G$ has pseudoreflections by computing an example
of a non-abelian group $G\leq GL(2,\C)$ where $\C^2/G$ is smooth:

\begin{example}\label{D8} Consider the dihedral group  $D_8=\langle g,h: g^4=h^2=1,
h^{-1}gh=g^{-1} \rangle$ viewed inside $GL(2,\C)$ as
\[
g=\begin{pmatrix}\e_4&0\\0&\e_4^{-1}\end{pmatrix},
h=\begin{pmatrix}0&1\\1&0\end{pmatrix}
\]
It is clear that the invariant ring is $\C[xy,x^4+y^4]$ and so is
smooth.  Denoting the natural representation by $V$, the character
table for this group is
\[
\begin{array}{c|rrrrr}
&  1 & g^2 & g & h & gh\\ \hline
 V_{0} & 1& 1&1 &1 &1\\
 V_{1}& 1& 1& 1&-1 &-1\\
 V_{2}& 1& 1&-1&1 &-1\\
 V_{3}& 1& 1&-1&-1&1\\
 V  & 2&-2&0 &0 &0\\
\end{array}
\]
and so the McKay quiver has the shape
\[
\xymatrix@C=20pt@R=20pt{ &V_{3}\ar@/^0.5pc/[d]&&\\
V_{2}\ar@/^0.5pc/[r]&
V\ar@/^0.5pc/[d]\ar@/^0.5pc/[u]\ar@/^0.5pc/[l]\ar@/^0.5pc/[r]&
 V_{1}\ar@/^0.5pc/[l] \\
&V_{0}\ar@/^0.5pc/[u]&&}
\]
We shall show that the
algebra Morita equivalent to the skew group ring is:
\[
\begin{array}{cc}\begin{array}{c}
\xymatrix@C=20pt@R=20pt{ &\bullet\ar@/^0.35pc/[d]|{c}&&\\
\bullet\ar@/^0.35pc/[r]|{b}&
\bullet\ar@/^0.35pc/[d]|{A}\ar@/^0.35pc/[u]|{C}\ar@/^0.35pc/[l]|{B}\ar@/^0.35pc/[r]|{D}&
 \bullet\ar@/^0.35pc/[l]|{d} \\
&\bullet\ar@/^0.35pc/[u]|{a}&&}\end{array} &
\begin{array}{c}\begin{array}{cc}Da=0 & Cb=0 \\Ad=0&Bc=0
\end{array}\\
aA+dD=bB+cC
\end{array}
\end{array}
\]
Note that $\tau(a)=d, \tau(d)=a, \tau(A)=D, \tau(D)=A$ and
likewise with the $c$'s and $D$'s.   Notice also that there are 5
relations, which coincides with the number of paths of length 0
(i.e. the number of vertices).  We now check that the relations
guessed above are correct:

Take the following $G$-equivariant basis:
\begin{alignat*}{2}
V_0 \otimes V&=\C(v_0\otimes e_1) + \C(v_0\otimes e_2)    &\qquad &(V\sim A)\\
V_1 \otimes V&=\C(v_1\otimes -e_1) + \C(v_1\otimes e_2) &\qquad &(V\sim D)\\
V_2 \otimes V&=\C(v_2\otimes e_2) + \C(v_2\otimes e_1)   &\qquad &(V\sim B)\\
V_3 \otimes V&=\C(v_3\otimes -e_2) + \C(v_3\otimes e_1)   &\qquad &(V\sim C)\\
V\otimes V&=\C\left(e_1 \otimes e_2 + e_2\otimes e_1 \right)&\qquad &(V_0\sim a)\\
& \quad +\C\left( -e_1 \otimes e_2 +  e_2 \otimes e_1 \right)& \qquad &(V_1\sim d)\\
& \quad +\C\left( e_1 \otimes e_1 +  e_2 \otimes  e_2 \right)&\qquad &(V_2\sim b)\\
& \quad +\C\left( -e_1 \otimes e_1 +  e_2 \otimes e_2
\right)&\qquad &(V_3\sim c)
\end{alignat*}
Since the determinant representation is $V_1$, if we consider
paths of length 2 ending at a given vertex $\rho$, the only
possible ones with non-zero $c_p$ must start at $\rho\otimes V_1$.
Consequently our search for non-zero $c_p$ restricts to the
following cases:
\[
\begin{array}{cc|c}
\t{start vertex} & \t{isomorphism} & \t{end vertex}\\ \hline
V_1\cong V_0\otimes V_1 & v_1\mapsto v_0\otimes v_1 &V_0\\
V_0\cong V_1\otimes V_1 & v_0\mapsto v_1\otimes v_1 &V_1\\
V_3\cong V_2\otimes V_1 & v_3\mapsto v_2\otimes v_1 &V_2\\
V_2\cong V_3\otimes V_1 & v_2\mapsto v_3\otimes v_1 &V_3\\
V\cong V\otimes V_1 & \begin{array}{rcl} e_1&\mapsto&e_1\otimes-v_1\\
e_2&\mapsto&e_2\otimes v_1\end{array} &V
\end{array}
\]
With this information the intertwiners are easy to compute: for
example
\[
\xymatrix{ V_1\ar[r]^(0.4)d&V\otimes V\ar[r]^(0.4){A\otimes 1}&
V_0\otimes V\otimes V\ar[r]^(0.6){1\otimes\alpha^2} & V_0\otimes
V_1\ar[r]^{\cong}& V_1}
\]
takes
\[
v_1\mapsto -e_1\otimes e_2+e_2\otimes e_1\mapsto -v_0\otimes
e_1\otimes e_2+v_0\otimes e_2\otimes e_1\mapsto -2v_0\otimes
v_1\mapsto -2v_1
\]
and so $c_{Ad}=-2$.  Continuing in this fashion our potential
(after dividing through by 2) is
\[
-Da+aA-Ad+dD+Cb-bB+Bc-cC
\]
which in compact form may be written as $-(Da)^\tau+(Cb)^\tau$.
Since $n-2=0$ we don't differentiate and so these are precisely
the relations, thus we obtain the relations guessed above.
\end{example}
\begin{remark} Taking a different $G$-equivariant basis may lead to a
potential which is not invariant under twisted cyclic permutation.
\end{remark}
\begin{remark} In the above example if we change $h$ slightly and so our group is
now the binary dihedral group $\mathbb{D}_{3,2}$ generated by
\[
a=\begin{pmatrix}\e_4&0\\0&\e_4^{-1}\end{pmatrix},
b=\begin{pmatrix}0&1\\-1&0\end{pmatrix}
\]
inside $\SL(2,\C)$, then although the character table and so shape
of the McKay quiver is the same, the relations differ.  Indeed, by
Lemma~\ref{mesh} the relations are now the preprojective
relations. This can also be verified directly by choosing an
appropriate $G$-equivariant basis.
\end{remark}

We now illustrate Lemma \ref{mesh} with an example of a finite
small subgroup of $GL(2,\C)$:
\begin{example}
Take $G=\mathbb{D}_{5,2}$, i.e. the group inside $GL(2,\C{})$
generated by
\[
G=\langle \begin{pmatrix} \e_4 & 0 \\ 0 & \e_4^{-1}
\end{pmatrix},\begin{pmatrix} 0 & \e_4 \\ \e_4 & 0
\end{pmatrix},\begin{pmatrix} \e_6 & 0 \\ 0 & \e_6
\end{pmatrix} \rangle
\]
The McKay quiver is
\[
\xymatrix@C=30pt@R=15pt{\rho_0\ar[3,1]^(0.3){a_0}&&\bullet\ar[3,1]\ar[3,1]^(0.3){b_0}&&det_V\ar[3,1]\ar[3,1]^(0.3){c_0}&&\rho_0\\
&&&&&&\\
\bullet\ar[1,1]^(0.35){a_1}&&\bullet\ar[1,1]\ar[1,1]^(0.35){b_1}&&\bullet\ar[1,1]\ar[1,1]^(0.35){c_1}&&\bullet\\
&\bullet\ar[-3,1]^(0.7){x_0}\ar[-1,1]^(0.65){x_1}\ar[1,1]^{x_2}\ar[3,1]^{x_3}&&\bullet\ar[-3,1]^(0.7){y_0}\ar[-1,1]^(0.65){y_1}\ar[1,1]^{y_2}\ar[3,1]^{y_3}&&V\ar[-3,1]^(0.7){z_0}\ar[-1,1]^(0.65){z_1}\ar[1,1]^{z_2}\ar[3,1]^{z_3}&&\\
\bullet\ar[-1,1]^{a_2}&&\bullet\ar[-1,1]\ar[-1,1]^{b_2}&&\bullet\ar[-1,1]\ar[-1,1]^{c_2}&&\bullet\\
&&&&&&\\
\bullet\ar[-3,1]^{a_3}&&\bullet\ar[-3,1]\ar[-3,1]^{b_3}&&\bullet\ar[-3,1]\ar[-3,1]^{c_3}&&\bullet}
\]
where the trivial, determinant and natural representations are
illustrated, and the ends of the two sides are identified. Note
that the permutation $\tau$ induced by tensoring with the
determinant representation rotates this picture to the left, and
so the fact that the permutation coincides with the AR translate
is implicit. The mesh relations are
\[
\begin{array}{ccccc}
x_0a_0=0 && y_0b_0=0 && z_0c_0=0\\
x_1a_1=0 && y_1b_1=0 && z_1c_1=0\\
x_2a_2=0 && y_2b_2=0 && z_2c_2=0\\
x_3a_3=0 && y_3b_3=0 && z_3c_3=0
\end{array}
\qquad
\begin{array}{rcl}
b_0x_0+b_1x_1+b_2x_2+b_3x_3&=&0\\
c_0y_0+c_1y_1+c_2y_2+c_3y_3&=&0\\
a_0z_0+a_1z_1+a_2z_2+a_3z_3&=&0
\end{array}
\]
and so we have 15 relations, matching the number of paths of
length 0 (i.e. the number of vertices).
\end{example}

\begin{example}
Take $G=\frac{1}{7}(1,2,4)\ltimes \langle \tau \rangle$, i.e. the
group inside $\SL(3,\C{})$ generated by
\[
G=\langle \begin{pmatrix} \e & 0 & 0\\ 0 & \e^2 &0\\0&0&\e^4
\end{pmatrix},\begin{pmatrix} 0& 1 & 0\\ 0 & 0 &1\\1&0&0
\end{pmatrix} \rangle
\]
where $\e^7=1$.  The McKay quiver is
\[
\xymatrix@C=20pt@R=20pt{&&L_1\ar[2,-1]_B&&\\
&&&&\\
&V_3\ar@(ld,lu)^{v}\ar@/^0.5pc/@<0.75ex>[0,2]|x\ar@/^0.5pc/[0,2]|y & & V\ar@/^0.5pc/[0,-2]|z\ar[-2,-1]_b\ar[2,1]^c\ar[2,-3]|a\ar@(dr,ur)_{u} &\\
&&&&\\
L_0\ar[-2,1]^A&&&&L_1\ar[-2,-3]|C}
\]
Denote the basis of $L_i$ by $l_i$ for $1\leq i\leq 3$, the basis
$V$ by $e_1,e_2,e_3$ and the basis of $V_3$ by $j_1,j_2,j_3$.
Taking the following $G$-equivariant basis:
\begin{alignat*}{2}
L_0 \otimes V&=\C(l_0\otimes e_1) + \C(l_0\otimes e_2) + \C(l_0\otimes e_3) &\qquad &(V\sim a)\\
L_1 \otimes V&=\C(l_1\otimes \rho e_1) + \C(l_1\otimes \rho^2 e_2) + \C(l_1\otimes e_3) &\qquad &(V\sim b)\\
L_2 \otimes V&=\C(l_2\otimes \rho^2 e_1) + \C(l_2\otimes \rho e_2) + \C(l_2\otimes e_3) &\qquad &(V\sim c)\\
V\otimes V&=\C(e_3 \otimes e_3) + \C(e_1\otimes e_1)+\C(e_2\otimes e_2) &\qquad &(V\sim u)\\
& \quad +\C(e_1 \otimes e_2) + \C(e_2 \otimes e_3) +\C(e_3\otimes e_1)& \qquad &(V_3\sim x)\\
& \quad +\C(e_2 \otimes e_1) + \C(e_3 \otimes e_2) +\C(e_1\otimes e_3)&\qquad &(V_3\sim y)\\
V_3\otimes V&=\C\left(j_1 \otimes e_3 + j_2\otimes e_1 + j_3\otimes e_2 \right)&\qquad &(L_0\sim A)\\
& \quad +\C\left(j_1 \otimes \rho^2 e_3 + j_2\otimes \rho e_1 + j_3\otimes e_2 \right)& \qquad &(L_1\sim B)\\
& \quad +\C\left(j_1 \otimes \rho e_3 + j_2\otimes \rho^2 e_1 + j_3\otimes e_2 \right)&\qquad &(L_2\sim C)\\
& \quad +\C(j_2 \otimes e_2) + \C(j_3 \otimes e_3) +\C(j_1\otimes e_1)& \qquad &(V\sim z)\\
& \quad +\C(j_2 \otimes e_3) + \C(j_3 \otimes e_1) +\C(j_1\otimes
e_2)&\qquad &(V_3\sim v)
\end{alignat*}
a calculation shows that the superpotential can be written as
\[
\cycle\quad a(x-y)A+b(x-\rho y)B+c(x-\rho^2
y)C-zux+vzy+\frac{1}{3}uuu-\frac{1}{3}vvv
\]
where $\rho$ is a cube root of unity.  Differentiating with
respect to the paths of length $3-2=1$ gives the relations
\[
\begin{array}{cc}
\partial_A & ax=ay\\
\partial_B & bx=\rho by\\
\partial_C & cx=\rho^2 cy\\
\partial_a & xA=yA\\
\partial_b & xB=\rho yB\\
\partial_c & xC=\rho^2 yC\\
\partial_x & Aa+Bb+Cc=zu\\
\partial_y & Aa+\rho Bb+\rho^2 Cb=vz\\
\partial_u & xz=u^2\\
\partial_v & zy=v^2
\end{array}
\]
\end{example}

\begin{example}
As in Example \ref{D8} consider the group $D_8$, but now acting on
the representation $V\oplus V$.  Since $D_8$ is generated inside
$V$ by pseudoreflections it follows that inside $V\oplus V$ it is
generated by symplectic reflections, thus in this case $\C[V \oplus V]\# G$
is the undeformed symplectic reflection algebra.  The McKay quiver
is now
\[
\xymatrix@C=40pt@R=40pt{ &\bullet\ar@/^0.35pc/[d]|{c}\ar@<0.5ex>@/^0.55pc/[d]|{\tt{c}}&&\\
\bullet\ar@/^0.35pc/[r]|{b}\ar@<0.5ex>@/^0.55pc/[r]|{\tt{b}}&
\bullet\ar@/^0.35pc/[d]|{A}\ar@<0.5ex>@/^0.55pc/[d]|{\tt{A}}\ar@/^0.35pc/[u]|{C}\ar@<0.5ex>@/^0.55pc/[u]|{\tt{C}}
\ar@/^0.35pc/[l]|{B}\ar@<0.5ex>@/^0.55pc/[l]|{\tt{B}}\ar@/^0.35pc/[r]|{D}\ar@<0.5ex>@/^0.55pc/[r]|{\tt{D}}&
 \bullet\ar@/^0.35pc/[l]|{d}\ar@<0.5ex>@/^0.55pc/[l]|{\tt{d}} \\
&\bullet\ar@/^0.35pc/[u]|{a}\ar@<0.5ex>@/^0.55pc/[u]|{\tt{a}}&&}
\]
The superpotential is given in compact form by {\tiny{
\[
\begin{array}{rrrrrrrr}
({\tt{A}}a{\tt{A}}a)& -(A{\tt{a}\tt{A}}a)& -2({\tt{A}\tt{d}}Da)
&({\tt{A}}d{\tt{D}}a) &(A{\tt{d}}{\tt{D}}a)& ({\tt{A}}b{\tt{B}}a)&
-(A{\tt{b}}{\tt{B}}a)&-({\tt{A}}c{\tt{C}}a)\\
(A{\tt{c}}{\tt{C}}a)& (A{\tt{a}}A{\tt{a}})& ({\tt{A}}dD{\tt{a}})&
(A{\tt{d}}D{\tt{a}})& -2(Ad{\tt{D}\tt{a}})& -({\tt{A}}bB{\tt{a}})&
(A{\tt{b}}B{\tt{a}})&
({\tt{A}}cC{\tt{a}})\\
-(A{\tt{c}}C{\tt{a}})& ({\tt{D}}d{\tt{D}}d)&
-(D{\tt{d}}{\tt{D}}d)& -({\tt{D}}b{\tt{B}}d)&
(D{\tt{b}}{\tt{B}}d)& ({\tt{D}}c{\tt{C}}d)& -(D{\tt{c}}{\tt{C}}d)&
(D{\tt{d}}D{\tt{d}})\\
({\tt{D}}bB{\tt{d}})& -(D{\tt{b}}B{\tt{d}})&
-({\tt{D}}cC{\tt{d}})& (D{\tt{c}}C{\tt{d}})& ({\tt{B}}b{\tt{B}}b)&
-(B{\tt{b}}{\tt{B}}b)& -2({\tt{B}}{\tt{c}}Cb)&
({\tt{B}}c{\tt{C}}b)\\
(B{\tt{c}}{\tt{C}}b)& (B{\tt{b}}B{\tt{b}})& ({\tt{B}}cC{\tt{b}})&
(B{\tt{c}}C{\tt{b}})& -2(Bc{\tt{C}}{\tt{b}})&
 {\tt{C}}c{\tt{C}}c)& -(C{\tt{c}}{\tt{C}}c)& (C{\tt{c}}C{\tt{c}})
\end{array}
% writing xy=x then y the superpotential is written as follows:
%\begin{array}{rrrrrrrr}
%(a{\tt{A}}a{\tt{A}})& -(a{\tt{A}}{\tt{a}}A)&
%-2(aD{\tt{d}}{\tt{A}}) &(a{\tt{D}}d{\tt{A}})
%&(a{\tt{D}}{\tt{d}}A)& (a{\tt{B}}b{\tt{A}})&
%-(a{\tt{B}}{\tt{b}}A)&-(a{\tt{C}}c{\tt{A}})\\
%(a{\tt{C}}{\tt{c}}A)& ({\tt{a}}A{\tt{a}}A)& ({\tt{a}}Dd{\tt{A}})&
%({\tt{a}}D{\tt{d}}A)& -2({\tt{a}}{\tt{D}}dA)&
%-({\tt{a}}Bb{\tt{A}})& ({\tt{a}}B{\tt{b}}A)&
%({\tt{a}}Cc{\tt{A}})\\
%-({\tt{a}}C{\tt{c}}A)& (d{\tt{D}}d{\tt{D}})&
%-(d{\tt{D}}{\tt{d}}D)& -(d{\tt{B}}b{\tt{D}})&
%(d{\tt{B}}{\tt{b}}D)& (d{\tt{C}}c{\tt{D}})& -(d{\tt{C}}{\tt{c}}D)&
%({\tt{d}}D{\tt{d}}D)\\
%({\tt{d}}Bb{\tt{D}})& -({\tt{d}}B{\tt{b}}D)&
%-({\tt{d}}Cc{\tt{D}})& ({\tt{d}}C{\tt{c}}D)& (b{\tt{B}}b{\tt{B}})&
%-(b{\tt{B}}{\tt{b}}B)& -2(bC{\tt{c}}{\tt{B}})&
%(b{\tt{C}}c{\tt{B}})\\
%(b{\tt{C}}{\tt{c}}B)& ({\tt{b}}B{\tt{b}}B)& ({\tt{b}}Cc{\tt{B}})&
%({\tt{b}}C{\tt{c}}B)& -2({\tt{b}}{\tt{C}}cB)&
%(c{\tt{C}}c{\tt{C}})& -(c{\tt{C}}{\tt{c}}C)& ({\tt{c}}C{\tt{c}}C)
%\end{array}
\]}}where recall since we are inside $\SL(4,\C)$ a negative sign is
introduced with cyclic permutation.  Differentiating appropriately
gives the relations {\tiny{\[
\begin{array}{c}
\begin{array}{cccc}
\begin{array}{c}
Da=0\\ {\tt{Da}}=0\\
{\tt{D}}a=-D{\tt{a}}\\
{\tt{D}}b=D{\tt{b}}\\ {\tt{D}}c=D{\tt{c}}\\ {\tt{D}}d=D{\tt{d}}
\end{array} &
\begin{array}{c}
Ad=0\\ {\tt{Ad}}=0\\
{\tt{A}}a=A{\tt{a}}\\ {\tt{A}}b=A{\tt{b}}\\
{\tt{A}}c=A{\tt{c}}\\ {\tt{A}}d=-A{\tt{d}}
\end{array}
 &
 \begin{array}{c}
Cb=0\\ {\tt{Cb}}=0\\
{\tt{C}}a=C{\tt{a}}\\ {\tt{C}}b=-C{\tt{b}}\\
{\tt{C}}c=C{\tt{c}}\\ {\tt{C}}d=C{\tt{d}}
\end{array} &
\begin{array}{c}
Bc=0\\ {\tt{Bc}}=0\\
{\tt{B}}a=B{\tt{a}}\\
{\tt{B}}b=B{\tt{b}}\\ {\tt{B}}c=-B{\tt{c}}\\ {\tt{B}}d=B{\tt{d}}
\end{array}
\end{array} \\
aA+bB=cC+dD \\{\tt{aA+bB}}={\tt{cC+dD}} \\
{\tt{a}}A+b{\tt{B}}=a{\tt{A}}+{\tt{b}}B={\tt{c}}C+d{\tt{D}}=c{\tt{C}}+{\tt{d}}D=\Sigma\\
\t{where } \Sigma=
\frac{1}{2}(a{\tt{A}}+b{\tt{B}}+c{\tt{C}}+d{\tt{D}})=\frac{1}{2}({\tt{a}}A+{\tt{b}}B+{\tt{c}}C+{\tt{d}}D)
\end{array}
\]}}
% again if xy=x then y the relations are
%{\tiny{\[
%\begin{array}{c}
%\begin{array}{cccc}
%\begin{array}{c}
%aD=0\\ {\tt{aD}}=0\\
%a{\tt{D}}=-{\tt{a}}D\\
%b{\tt{D}}={\tt{b}}D\\ c{\tt{D}}={\tt{c}}D\\ d{\tt{D}}={\tt{d}}D
%\end{array} &
%\begin{array}{c}
%dA=0\\ {\tt{dA}}=0\\
%a{\tt{A}}={\tt{a}}A\\ b{\tt{A}}={\tt{b}}A\\
%c{\tt{A}}={\tt{c}}A\\ d{\tt{A}}=-{\tt{d}}A
%\end{array}
% &
% \begin{array}{c}
%bC=0\\ {\tt{bC}}=0\\
%a{\tt{C}}={\tt{a}}C\\
%b{\tt{C}}=-{\tt{b}}C\\ c{\tt{C}}={\tt{c}}C\\ d{\tt{C}}={\tt{d}}C
%\end{array} &
%\begin{array}{c}
%cB=0\\ {\tt{cB}}=0\\
%a{\tt{B}}={\tt{a}}B\\
%b{\tt{B}}={\tt{b}}B\\ c{\tt{B}}=-{\tt{c}}B\\ d{\tt{B}}={\tt{d}}B
%\end{array}
%\end{array} \\
%Aa+Bb=Cc+Dd \\{\tt{Aa+Bb}}={\tt{Cc+Dd}} \\
%A{\tt{a}}+{\tt{B}}b={\tt{A}}a+B{\tt{b}}=C{\tt{c}}+{\tt{D}}d={\tt{C}}c+D{\tt{d}}=\Sigma\\
%\t{where } \Sigma=
%\frac{1}{2}({\tt{A}}a+{\tt{B}}b+{\tt{C}}c+{\tt{D}}d)=\frac{1}{2}(A{\tt{a}}+B{\tt{b}}+C{\tt{c}}+D{\tt{d}})
%\end{array}
%\]}}
The calculations involving this example were done using a computer
program written in GAP \cite{GAP4}. The source code of this
program can be downloaded at \\{\tt
http://www.algebra.ua.ac.be/research/mckay.gap}.
\end{example}
\section{Koszul algebras}
\label{koszul}
Thus far, we have explained that, for $G < GL(V)$, $\C[V] \# G$, and
hence the quiver algebras Morita equivalent to it, are twisted
Calabi-Yau and derived from a twisted superpotential (in the case
$G < \SL(V)$, we may remove the word ``twisted'').  Here we explain
that this is part of a more general phenomenon: \textit{any
  Koszul, (twisted) Calabi-Yau algebra is of the form
  $\cD(\omega, k)$}.  This was proved in \cite{dbv} for algebras over
a field, so our result generalizes this to the quiver case.  We also
prove a converse: any algebra of the form $A=\cD(\omega, k)$ is Koszul
and (twisted) Calabi-Yau iff a natural complex attached to $\omega$ is
a bimodule resolution of $A$.  

Recall that a graded algebra is Koszul if all the maps in its bimodule
resolution have degree $1$. This is clearly invariant under a Morita
equivalence $A \sim eAe$, using the functor described in Section
\ref{cofree}. Then, McKay correspondence algebras are Koszul, by the
following well-known lemma:
\begin{lemma}\label{mckaykoszul}
If $G \subset \SL(V)\cong \SL_n$ then $\C[V]\# G$ is $n$-CY and Koszul.
\end{lemma}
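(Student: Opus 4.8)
The plan is to construct the bimodule resolution of $A = \C[V]\#G$ by smashing the Koszul resolution of the polynomial ring $\C[V] = \Sym(V^*)$ with $G$, and then to read off both Koszulity and the Calabi--Yau property directly from the resulting complex. The hypothesis $G \subset \SL(V)$ enters at exactly one point, namely when one checks that the resolution is self-dual with no twist.

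First I would recall that $B := \C[V]$ is the prototypical Koszul algebra, carrying the $G$-equivariant Koszul bimodule resolution
\[
0 \to B \otimes_\C \wedge^n V^* \otimes_\C B \to \cdots \to B \otimes_\C \wedge^1 V^* \otimes_\C B \to B \otimes_\C B \to B \to 0,
\]
whose differentials are the usual Koszul contractions and whose final map is multiplication. Each differential is homogeneous of degree $1$, and since $G$ acts linearly on $V$ (hence on each $\wedge^k V^*$) the entire complex is $G$-equivariant.

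Next I would apply the functor $-\#G$, equivalently tensor over $\C$ with $\C G$ and reorganize the result as a tensor algebra over the semisimple base $S = \C G$ exactly as in Section~\ref{mckay}. Because $\C G$ is semisimple this operation is exact and sends projective $B$-bimodules to projective $A$-bimodules, so I obtain a length-$n$ bimodule resolution $\cP^\bullet$ of $A$ with
\[
\cP^k = A \otimes_{S} (\wedge^k V^* \otimes_\C \C G) \otimes_{S} A,
\]
the $\C G$-bimodule structure on the middle factor being the diagonal one used for $U$ in Section~\ref{mckay}. Since smashing does not affect internal degrees, the differentials are still of degree $1$, so $A$ is Koszul; this is consistent with the Morita-invariance of Koszulity noted above.

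Finally, for the $n$-Calabi--Yau property I would apply $\Hom_{A-A}(-, A\otimes_\C A)$ to $\cP^\bullet$. Using the duality dictionary of Section~\ref{duals} (in particular the identification $(\C G)^* \cong \C G$ via the trace), the dual of $\cP^k$ has middle factor $(\wedge^k V^*)^* \otimes_\C \C G \cong \wedge^k V \otimes_\C \C G$. The canonical isomorphism $\wedge^k V \cong \wedge^{n-k} V^* \otimes_\C \wedge^n V$ then rewrites this as the middle factor of $\cP^{n-k}$, twisted by the one-dimensional representation $\det V = \wedge^n V$. This is the one and only place the hypothesis is used: for $G \subset \SL(V)$ the representation $\det V$ is trivial, so the twist disappears and $\Hom_{A-A}(\cP^\bullet, A\otimes_\C A)\cong \cP^{n-\bullet}$, which is precisely condition~\eqref{cycond}. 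The step requiring the most care is checking that these term-by-term identifications are compatible with the differentials and with the $S$-bimodule structure; both follow from the $G$-equivariance of the Koszul differential together with Schur's lemma, handled exactly as the scalars $c_p$ were in Section~\ref{mckay}. Hence $A$ is $n$-CY and Koszul.
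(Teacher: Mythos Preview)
Your argument is correct and follows essentially the same route as the paper: both take the standard $G$-equivariant Koszul bimodule resolution of $\C[V]$, smash with $\C G$ (an exact functor preserving grading, hence Koszulity), and use $G\subset\SL(V)$ to make the pairing $\wedge^k V^*\times\wedge^{n-k}V^*\to\C$ a $G$-module pairing so that the resulting resolution is self-dual without twist. Your write-up is in fact somewhat more explicit about the duality identifications than the paper's own proof.
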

\begin{proof}
  The standard Koszul bimodule resolution for $\C[V]$ gives a self-dual
  resolution of $\C[V]$, so $\C[V]$ is $n$-CY. The $k^{th}$ term of
  this resolution $\cK^\bullet$ is
  $\C[V] \otimes_\C \wedge^k V^* \otimes_\C \C[V]$ and it is
  isomorphic to the $(n-k)^{th}$ dual term because of the pairing
\[
\wedge^k V^* \times \wedge^{n-k} V^*\to \C:(v_1,v_2) \to a \iff \phi_1 \wedge \phi_2  = a x_1\wedge\dots \wedge x_n.
\]
Because $G \subset \SL(V)$ this pairing is a pairing of
left $\C G$-modules. 

Now we smash the whole resolution over $\C$ with $\C G$. This tensor
functor is exact so we get a new resolution. This is now self-dual as
$\C[V] \# G$-bimodules over the base ring $\C[G]$ (i.e., as
$(\C[V] \# G) \otimes_{\C[G]} (\C[V] \# G)$-modules).  The Koszul
property follows from the fact that smashing preserves the grading.
\end{proof}

In order to formulate our main theorem, we need to introduce a natural
complex $\cW^{\bullet}$ attached to any superpotential $\omega$, for
$A = \cD(\omega,k)$ (which may be viewed as a bimodule version of
\cite[(5.3)]{dbv}).  For simplicity, we will assume for now that
$|\omega| = k+2$, so that $A$ is quadratic.

Recall the spaces $W_i$ defined just above Definition \ref{cddfn}.
Consider the complex
\begin{equation} \label{ucplx}
\cW^\bullet :=  0 \rightarrow A \otimes W_{|\omega|} \otimes A \mathop{\rightarrow}^{d_{|\omega|}} A \otimes W_{|\omega|-1} \otimes A
  \rightarrow \cdots \rightarrow  A \otimes W_1 \otimes A \mathop{\rightarrow}^{d_1} A \otimes W_0 \otimes A \rightarrow 0,
\end{equation}
where, for $v_1, \ldots, v_i \in W$ and $a, a' \in A$,
\begin{gather*}
  d_i = \varepsilon_i (\spl_{L} + (-1)^{i} \spl_{R})|_{A \otimes W_i \otimes A}, 
\\  \spl_{L}(a \otimes v_1 v_2 \cdots v_i \otimes a') = a v_1 \otimes
  v_2 \cdots v_i \otimes a', 
\\ \spl_{R}(a \otimes v_1 v_2 \cdots
  v_i \otimes a') = a \otimes v_1 \cdots v_{i-1} \otimes v_i a', \\
\varepsilon_i := \begin{cases} (-1)^{i (|\omega|-i)}, & \text{if $i < (|\omega|+1)/2$,} \\
     1, & \text{otherwise}.
   \end{cases}
\end{gather*}
%and where $\varepsilon_i$ are some signs $\pm 1$ which will be determined later.
It is easy to check that the above yields a complex, i.e.,
$d_i \circ d_{i+1} = 0$. Moreover, the terms, aside from $A$ itself,
are projective bimodules, and the maps are $A$-bimodule maps.  We will
see that it is exact iff $A$ is Koszul and Calabi-Yau.  More precisely, we will prove:
\begin{theorem}\label{kosthm} An algebra $T_S W / \langle R \rangle$ 
is Koszul and Calabi-Yau iff it is of the form $\cD(\omega, k)$ and
the corresponding complex \eqref{ucplx} is exact in positive degree and $H^0(\cW^\bullet)=A$.  In this case, \eqref{ucplx} is the Koszul resolution of $A$, and is self-dual.
\end{theorem}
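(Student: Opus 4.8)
The plan is to recognize $\cW^\bullet$ as the bimodule Koszul complex of the quadratic algebra $A = T_S W/\langle R\rangle$ and to extract both properties from this identification. Recall the standard Koszul--dual spaces $K_i := \bigcap_{a+b=i-2} W^{\otimes a}\otimes_S R\otimes_S W^{\otimes b}$, with $K_0=S$, $K_1=W$, $K_2=R$, and the bimodule complex $\cK^\bullet$ whose terms are $A\otimes K_i\otimes A$ and whose differentials are the alternating split maps $\spl_{L}\pm\spl_{R}$. The well-known characterization of Koszulity is that $A$ is Koszul exactly when $\cK^\bullet$ is a resolution of $A$, i.e.\ exact in positive degree with $H^0=A$; and since $\cK^\bullet$ is then the minimal resolution, the (twisted) Calabi--Yau property is precisely the self-duality of $\cK^\bullet$ up to the twist. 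Thus the central linear-algebra task is the identity $K_i=W_i$, where $W_i=\Image(\Delta_{n-i}^\omega)$ and $n=|\omega|$, together with the observation that the two complexes then have literally the same differentials.

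For the implication ($\Leftarrow$): assume $A=\cD(\omega,k)$ with $|\omega|=k+2$ and that $\cW^\bullet$ is exact in positive degree with $H^0=A$. The terms $A\otimes W_i\otimes A$ are projective bimodules and each $d_i$ is homogeneous of degree $1$, so by hypothesis $\cW^\bullet$ is a linear projective bimodule resolution of $A$; this is precisely Koszulity, and no further identification is needed here. For the Calabi--Yau property I would prove that $\cW^\bullet$ is self-dual: dualizing a term gives $A\otimes W_i^*\otimes A$, so self-duality reduces to a perfect $S$-bimodule pairing $W_i\times W_{n-i}\to S$ compatible with the differentials. The (twisted) superpotential symmetry $[\psi^{\tau^*}\omega]=(-1)^{n-1}[\omega\psi]$ is exactly what makes the transpose of the left-contraction $\Delta_{n-i}^\omega$ coincide, up to the sign $\varepsilon_i$ and the twist, with the right-contraction, yielding $W_i^*\cong(W_{n-i})_\tau$ and hence $\Hom_{A-A}(\cW^\bullet,A\otimes_\C A)\cong\cW^{n-\bullet}\otimes_A A^\sigma$. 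As $\cW^\bullet$ is a resolution, this gives the (twisted) Calabi--Yau condition.

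For the implication ($\Rightarrow$): assume $A$ is Koszul and (twisted) Calabi--Yau of dimension $n$. Koszulity makes $\cK^\bullet$ a resolution, and the Calabi--Yau property makes it self-dual; self-duality pairs the bottom term $A\otimes K_0\otimes A$ (the free bimodule, $K_0=S$) with the top term $A\otimes K_n\otimes A$, forcing $K_n$ to be an invertible $S$-bimodule of the form $S_\tau$, one-dimensional over each vertex. I would then define $\omega$ to be a generator of $K_n\subset W^{\otimes n}$; chasing the self-duality isomorphism down through the differentials forces $\omega$ to satisfy the twisted superpotential symmetry, accounting for the sign $(-1)^{n-1}$ and the automorphism $\tau$. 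Granting the identity $K_i=W_i$ from the first paragraph, we then get $R=K_2=W_2=\Image(\Delta_{n-2}^\omega)$, so $A=\cD(\omega,n-2)$, and $\cK^\bullet=\cW^\bullet$ is the (self-dual) Koszul resolution, in particular exact in positive degree with $H^0=A$.

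The main obstacle is the identity $K_i=W_i$. The inclusion $W_i\subseteq K_i$ is the easy half: since $\omega\in K_n$ lies in every $W^{\otimes c}\otimes_S R\otimes_S W^{\otimes d}$ with $c+d=n-2$, applying the bracket associativity $[(\phi\otimes\psi)x]=[\phi[\psi x]]$ to contract the first $n-i$ tensor factors places each element $[\psi\omega]$ of $W_i$ inside every $W^{\otimes a}\otimes_S R\otimes_S W^{\otimes b}$ with $a+b=i-2$, hence inside $K_i$. The reverse inclusion $K_i\subseteq W_i$ is the delicate point: it says the contraction map $\Delta_{n-i}^\omega$ already surjects onto all of $K_i$, and I would deduce it from the self-duality of the Koszul resolution, which realizes the perfect pairing $K_i\cong(K_{n-i})^*$ precisely as contraction against $\omega$. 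Threading the sign conventions $\varepsilon_i$ so that the left- and right-contractions assemble into a genuine complex and a genuine duality isomorphism, and correctly transporting the automorphism $\tau$ through the evaluation and coevaluation maps, will be the most demanding bookkeeping.
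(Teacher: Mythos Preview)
Your ($\Leftarrow$) direction is correct and matches the paper: once $\cW^\bullet$ is assumed to be a resolution of $A$, the fact that all differentials have degree $+1$ gives Koszulity, and the self-duality of $\cW^\bullet$ (your second paragraph, which is the content of the paper's Lemma~\ref{selfdual}) gives the Calabi--Yau property.

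For ($\Rightarrow$) your route diverges from the paper's, and the key step is not yet justified. You work directly with the bimodule self-duality $\cK^{n-\bullet}\cong(\cK^\bullet)^\vee$, extract $\omega$ from the top term, and then assert that the induced pairing $K_i\cong K_{n-i}^*$ ``is precisely contraction against $\omega$''. But the Calabi--Yau condition only hands you \emph{some} chain isomorphism; identifying it with the concrete contraction map is not merely sign bookkeeping. An inductive chase from the ends does not obviously propagate, since the differentials $d_{n-i+1}:K_{n-i+1}\to K_{n-i}$ are not surjective, so the commuting-square condition $\phi_i\circ d_{n-i+1}=d_i^*\circ\phi_{i-1}$ does not pin down $\phi_i$ from $\phi_{i-1}$. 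Equivalently, what you need is that the contraction map $A^!_{n-i}\to K_i$, $\xi\mapsto[\xi\omega]$, is injective (hence an isomorphism by the dimension equality $\dim K_i=\dim K_{n-i}$ coming from self-duality); this is exactly nondegeneracy of the pairing $(\xi,\eta)\mapsto\langle\omega,\xi\eta\rangle$ on $A^!$, which does not follow from the superpotential symmetry alone.

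The paper closes this gap by passing to the Yoneda algebra $\Ext^*_A(S,S)\cong A^!$, where the Calabi--Yau property is invoked (via \cite[Theorem~A.5.2]{bocklandt}) to produce a \emph{nondegenerate} trace $\Tr:\Ext^n(S,S)\to\C$ with the supersymmetry \eqref{trssym}. Since the Yoneda product on $A^!$ is literally multiplication, the trace is an element $\omega\in(A^!_n)^*=K_n$, and the pairing $\Ext^i\times\Ext^{n-i}\to\C$ is manifestly $(\xi,\eta)\mapsto\langle\omega,\xi\eta\rangle$. Nondegeneracy then gives $A^!_{n-i}\cong(A^!_i)^*=K_i$ with image exactly $W_i$, so $W_i=K_i$ for all $i$ and $\cW^\bullet=\cK^\bullet$. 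Your approach could be completed by proving the analogue of that cited theorem directly for the bimodule resolution, but as written the crucial identification of the abstract self-duality with contraction against $\omega$ is a genuine gap rather than routine bookkeeping.
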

\begin{remark}
The condition that $\cW^\bullet$ is a resolution of $A$ is very subtle and hard to check for a given potential.
In the one vertex case it is shown in \cite{dbvfirst,dbv} that this implies some special regularity conditions on $\omega$.
\end{remark}

We begin with the
\begin{lemma}\label{selfdual}
For any superpotential $\omega$, the complex \eqref{ucplx}
is self-dual.
\end{lemma}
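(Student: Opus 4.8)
The plan is to construct an explicit degree-reversing isomorphism of complexes between $\cW^\bullet$ and its bimodule dual $\Hom_{A-A}(\cW^\bullet, A \otimes_\C A)$, built out of the pairings from Section \ref{duals} together with the superpotential symmetry. First I would identify the $i$-th dual term. Since $\cW^i = A \otimes W_i \otimes A$ with $W_i \subset V^{\otimes i}$ a bimodule, applying $\Hom_{A-A}(-, A \otimes_\C A)$ yields $A \otimes W_i^{*B} \otimes A$, where $W_i^{*B}$ is the bimodule dual of Section \ref{duals}. The crux is therefore to produce, for each $i$, a bimodule isomorphism $\theta_i : W_{|\omega|-i} \iso W_i^{*B}$, so that $\cW^{|\omega|-i}$ is carried to $(\cW^i)^\vee$ and the total degree reverses as demanded by \eqref{cycond}.

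The construction of $\theta_i$ is where the superpotential enters. Recall $W_{|\omega|-k} = \Image \Delta_k^\omega$, so $W_j$ is spanned by elements $[\psi\, \omega]$ for $\psi \in (V^{\otimes (|\omega|-j)})^*$; the pairing $\langle,\rangle$ between $V^{*\otimes m}$ and $V^{\otimes m}$ from Section \ref{duals} then lets me pair $W_{|\omega|-i}$ against $W_i$ by contracting both against $\omega$ and taking the trace. Concretely I would define the pairing $W_{|\omega|-i} \times W_i \to S$ by sending $(\,[\phi\,\omega]\,,\,[\psi\,\omega]\,)$ to the appropriate bracket $[\phi\,\omega\,\psi]$ (an element of $S$, since the total contraction length matches $|\omega|$), promoting it to a bimodule map $W_{|\omega|-i} \to W_i^{*B}$. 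The superpotential condition $[\phi^{?}\,\omega] = (-1)^{|\omega|-1}[\omega\,\phi]$ guarantees this pairing is nondegenerate and, crucially, symmetric up to the sign bookkeeping encoded in $\varepsilon_i$; this is exactly why those signs were inserted in the definition of $d_i$.

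The verification then splits into two routine-but-delicate checks. The main obstacle is the \textbf{commutativity of the squares}: one must show that under the $\theta_i$ the dual of $d_i$ matches $d_{|\omega|-i+1}$ up to sign. This reduces to comparing $\spl_L$ with the dual of $\spl_R$ (and vice versa) after contraction against $\omega$, and tracking how the $\varepsilon_i$ and the $(-1)^i$ in $d_i$ conspire with the cyclic-symmetry sign $(-1)^{|\omega|-1}$. Getting every sign to cancel is the genuinely fiddly part, and is precisely the reason the statement is isolated as Lemma \ref{selfdual} before the harder exactness claim in Theorem \ref{kosthm}. The other check, that each $\theta_i$ is a bimodule \emph{isomorphism}, follows from nondegeneracy of the evaluation/coevaluation pairings together with the observation that $\Delta_k^\omega$ and its dual have the same image, so $\dim W_{|\omega|-i} = \dim W_i^{*B}$. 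I would carry these out in the order: (1) define $\theta_i$ via the contracted pairing; (2) check it is a bimodule map and an isomorphism using the evaluation--coevaluation identities of Section \ref{duals}; (3) verify the squares commute, managing the signs; and (4) conclude that $\theta_\bullet$ is an isomorphism of complexes exhibiting the self-duality, noting that the whole argument is phrased bimodule-theoretically and hence descends through the Morita equivalence of Section \ref{morita} so that no choice of quiver basis is needed.
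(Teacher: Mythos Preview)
Your proposal is correct and follows essentially the same route as the paper. Both arguments construct the duality from the pairing induced by $\omega$, invoke the superpotential symmetry to get the required (super)symmetry of that pairing, and then reduce the compatibility with differentials to the identification of $\spl_L$ with the dual of $\spl_R$ up to the signs $\varepsilon_i$. The only cosmetic difference is that the paper pairs the duals, defining $\langle\,,\rangle: W_{|\omega|-i}^* \otimes W_i^* \to \C$ by $\langle\xi,\eta\rangle = [(\xi\otimes\eta)\omega]$ and reading off $W_{|\omega|-i}^* \iso W_i$, whereas you pair the spaces themselves via $([\phi\,\omega],[\psi\,\omega])\mapsto [\phi\,\omega\,\psi]$ to get $W_{|\omega|-i}\iso W_i^{*B}$; under the trace identifications of Section~\ref{duals} these are the same map.
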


\begin{proof}
First, note that $\omega$ induces perfect pairings
\begin{equation*}
\<\,,\>: W_{|\omega|-i}^* \otimes W_i^* \rightarrow \C, \quad \<\xi, \eta\> := [(\xi \otimes \eta) \omega],
\end{equation*}
satisfying the supersymmetry property,
\begin{equation*}
\<\xi, \eta\> = (-1)^{|\eta| |\xi|} \< \eta, \xi\>.
\end{equation*}
This yields an isomorphism $\eta: W_{|\omega|-i}^* \iso W_{i}$, and hence
a duality pairing of bimodules
\begin{equation*}
\<\,,\>: (A \otimes W_i \otimes A) \otimes (A \otimes W_{|\omega|-i} \otimes A) \rightarrow A \otimes A, \quad
\<a \otimes x \otimes a', b \otimes y \otimes b'\> :=  a'b \otimes [\eta^{-1}(x) y] \otimes b'a.
\end{equation*}
This explains why the terms in the above complex are in
duality.

It remains to check that the differentials satisfy the self-duality
property: $d_i = d_{|\omega|+1-i}^*$.  It suffices to show that
$\spl_L|_{A \otimes W_i \otimes A}$ is identified with
$\varepsilon_i \varepsilon_{|\omega|-i} \spl_R^*|_{A \otimes
  W_{|\omega|-i}^* \otimes A}$
under the above duality.  That is, if we denote by
$\<\,,\>^{-1}: W_i \otimes W_{|\omega|-i} \rightarrow
\C$ the inverse to the pairing $\<\,,\>$, then for all
$x \in W_{i}, y \in W_{|\omega|+1-i}$, we need to check that
\begin{equation*}
\< 1 \otimes y \otimes 1, \spl_{L}(1 \otimes x \otimes 1)\>  = \varepsilon_i \varepsilon_{|\omega|-i} \< \spl_R(1 \otimes y \otimes 1), 1 \otimes x \otimes 1 \>.
\end{equation*}
This amounts to checking that, for all $\xi \in W_1^* = W^*$, we have
\begin{equation*}
\< [\xi x], y \>^{-1} = \varepsilon_i \varepsilon_{|\omega-i|} \< x, [y \xi] \>^{-1},
\end{equation*}
where $\<\,,\>^{-1}$ denotes the inverse pairing to $\<\,,\>$, i.e.,
\begin{equation*}
\<u, w\>^{-1} := \<\eta^{-1}(u), \eta^{-1}(w)\> = \varepsilon_i \varepsilon_{|\omega|-i} [u \eta^{-1}(w)] = (-1)^{|u| |w|} [w \eta^{-1}(u)].
\end{equation*}
Thus, we have to check that
\begin{equation*}
[\eta^{-1}([\xi x]) y] = \varepsilon_i \varepsilon_{|\omega|-i} [x \eta^{-1}[y \xi]] = \varepsilon_i \varepsilon_{|\omega|-i} (-1)^{|x| (|y|-1)} [\eta^{-1}(x) [y \xi]].
\end{equation*}
By associativity identities and the definition of $\eta$, the left-hand
side is $[(\eta^{-1}(x) \otimes \xi) y]$, which is equal to
$\varepsilon_i \varepsilon_{|\omega|-i} (-1)^{i(|\omega|-i)}$ times the RHS.  Thus, setting 
$\varepsilon_i = \begin{cases} (-1)^{i (|\omega|-i)}, & \text{if $i < (|\omega|+1)/2$,} \\
     1, & \text{otherwise}
   \end{cases}$ yields the desired self-duality.
\end{proof}

\begin{lemma}\label{subcomplex}
The complex \eqref{ucplx} is a subcomplex of the Koszul complex for $\cD(\omega, |\omega|-2)$.
\end{lemma}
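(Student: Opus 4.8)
The plan is to realize $\cW^\bullet$ as a sub-bimodule-complex of the standard bimodule Koszul complex of the quadratic algebra $A = \cD(\omega, n-2)$, where I write $n := |\omega|$ and $R := W_2$ for the space of relations. Recall that this Koszul complex has $i$-th term $A \otimes K_i \otimes A$, with
\begin{equation*}
K_i := \bigcap_{j=0}^{i-2} V^{\otimes j} \otimes R \otimes V^{\otimes (i-2-j)} \subseteq V^{\otimes i},
\end{equation*}
and differential $\spl_L + (-1)^i \spl_R$ (the two summands landing in $A \otimes K_{i-1} \otimes A$ via the inclusions $K_i \subseteq V \otimes K_{i-1}$ and $K_i \subseteq K_{i-1} \otimes V$). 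Since $d_i = \varepsilon_i(\spl_L + (-1)^i \spl_R)$ has exactly this shape, it suffices to prove two things: first, that $W_i \subseteq K_i$ for every $i$, so that the terms of $\cW^\bullet$ are sub-bimodules of the terms of the Koszul complex; and second, that $\spl_L$ and $\spl_R$ carry $A \otimes W_i \otimes A$ into $A \otimes W_{i-1} \otimes A$, so that the Koszul differential genuinely restricts to $\cW^\bullet$.

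For the inclusion $W_i \subseteq K_i$, I would fix $j$ and use that membership in $V^{\otimes j} \otimes R \otimes V^{\otimes(i-2-j)}$ is detected by contractions: an element $z \in V^{\otimes i}$ lies in this subspace iff $[\phi[z\chi]] \in R = W_2$ for all $\phi \in (V^{\otimes j})^*$ and all $\chi \in (V^{\otimes(i-2-j)})^*$. Writing a general element of $W_i$ as $z = [\psi\omega]$ with $\psi \in (V^{\otimes(n-i)})^*$ and applying the associativity identities of \S\ref{duals}, I would rewrite
\begin{equation*}
[\phi[z\chi]] = \big[[(\phi\otimes\psi)\,\omega]\,\chi\big],
\end{equation*}
which is a contraction of $\omega$ by a total of $n-2$ covectors, split as $m_1 := j+(n-i)$ factors on the left and $m_2 := i-2-j$ factors on the right. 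The crucial step is to turn this two-sided contraction into a one-sided one. Iterating the superpotential identity $\vec\omega = (-1)^{n-1}\omega$ from \S\ref{quivers} a total of $m_2$ times gives $\omega = (-1)^{(n-1)m_2}\,\vec\omega^{\,(m_2)}$, which rotates the $m_2$ right-hand factors to the front and yields
\begin{equation*}
\big[[(\phi\otimes\psi)\,\omega]\,\chi\big] = (-1)^{(n-1)m_2}\,[(\chi\otimes\phi\otimes\psi)\,\omega].
\end{equation*}
As $\chi\otimes\phi\otimes\psi \in (V^{\otimes(n-2)})^*$, the right-hand side is a pure left-derivative of $\omega$ and hence lies in $W_2 = R$ by definition. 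This holds for every $\phi,\chi$ and every $j$, so $W_i \subseteq K_i$.

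For the differential compatibility, I would check $W_i \subseteq V \otimes W_{i-1}$ directly by peeling the first tensor factor: for $z = [\psi\omega]$, expanding $z = \sum_x x \otimes [x^* z]$ in a basis and using associativity gives $z = \sum_x x \otimes [(x^*\otimes\psi)\,\omega]$, whose second slot lies in $W_{i-1}$; thus $\spl_L(A \otimes W_i \otimes A) \subseteq A \otimes W_{i-1}\otimes A$. The companion inclusion $W_i \subseteq W_{i-1}\otimes V$ needed for $\spl_R$ follows the same way after one application of $\vec\omega = (-1)^{n-1}\omega$ to peel the last factor. Hence the Koszul differential preserves the sub-bimodules $A \otimes W_\bullet \otimes A$ and restricts there to $\spl_L + (-1)^i\spl_R$, which equals $d_i$ up to the scalar $\varepsilon_i = \pm 1$. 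Since these signs do not affect the complex structure, I would absorb them by rescaling the $i$-th term by $\delta_i := \prod_{m \le i}\varepsilon_m$, turning the inclusions into an honest chain map and exhibiting $\cW^\bullet$ as a subcomplex.

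I expect the main obstacle to be the sign bookkeeping in the second paragraph: carefully iterating the (super)cyclic symmetry $\vec\omega = (-1)^{n-1}\omega$ to move the right contraction $\chi$ past the left contraction, and confirming that the accumulated sign is exactly $(-1)^{(n-1)m_2}$. In the twisted case one must replace $\vec\omega = (-1)^{n-1}\omega$ by the twisted identity $\vec\omega^\tau = (-1)^{n-1}\omega$ and track the automorphism $\tau$ appearing both in the rotated covectors and in the twisted definition of $W_2$; everything else is a routine application of the bracket associativity identities.
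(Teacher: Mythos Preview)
Your proposal is correct and follows the same route as the paper: identify the terms of the bimodule Koszul complex as $K_i=\bigcap_{j} V^{\otimes j}\otimes R\otimes V^{\otimes(i-2-j)}=(A^!_i)^*$ and prove $W_i\subseteq K_i$. The paper is far terser here---it simply declares the inclusion $W_k\subset (A^!_k)^*$ to be ``immediate''---whereas you correctly pinpoint that this step genuinely uses the supercyclic invariance of $\omega$ to turn a two–sided contraction into a left derivative (for a mere weak potential such as $\omega=xyxy$ the inclusion fails). You also supply two checks the paper leaves implicit: that $W_i\subseteq V\otimes W_{i-1}$ and $W_i\subseteq W_{i-1}\otimes V$ so the Koszul differential really restricts, and that the signs $\varepsilon_i$ can be absorbed by rescaling to produce an honest chain map.

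One small slip in your explicit formula: with the paper's convention that in $[(\xi_1\otimes\xi_2)\,w]$ the \emph{rightmost} factor $\xi_2$ contracts first, the covector you obtain after rotating the last $m_2$ tensor slots of $\omega$ to the front should read $\phi\otimes\psi\otimes\chi$ rather than $\chi\otimes\phi\otimes\psi$. This does not affect the argument, since all you need is that the result is \emph{some} left derivative of $\omega$ and hence lies in $W_2$; you rightly flag the sign/ordering bookkeeping as the only delicate point.
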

\begin{proof}
  The Koszul complex can be defined as follows. If $A =T_S W/\<R\>$
  where $R$ is an $S$-subbimodule of $W\otimes W$, then we denote by
  $R^\perp$ the submodule of $W^*\otimes W^*$ that annihilates $R$.
  The Koszul dual of $A$ is $A^! := T_S W^*/\<R^\perp\>$ and it is
  again a graded algebra. For each $k$ we have a projection
  $W^{*\otimes k} \to A^!_k$, and, dually, this gives us injections
  $(A^!_k)^* \to W^{\otimes k}$.  The Koszul complex $\cK^\bullet$ is
  defined by the maps
  $d:A\otimes (A^!_k)^* \otimes A\to A\otimes (A^!_{k-1})^* \otimes A$
  which are constructed analogously to the maps in \eqref{ucplx}. To
  prove the lemma we only have to show that $W_k \subset (A^!_k)^*$.

What does $(A^!_k)^*$ look like? Because $A^!_k = W^{*\otimes k}/(\sum_l W^{*\otimes l} \otimes R^\perp \otimes W^{*\otimes k-l-2})$ one has that $w \in (A^!_k)^*$
if and only if $\<w,\phi\>=0$ for all $\phi \in (\sum_l W^{*\otimes l} \otimes R^\perp \otimes W^{*\otimes k-l-2})$. This is the same as to say that
\[
 w \in \bigcap_l W^{\otimes l} \otimes R \otimes W^{\otimes k-l-2} = \bigcap_l W^{\otimes l} \otimes W_2 \otimes W^{\otimes k-l-2}.
\]
We conclude immediately that $W_k \subset (A^!_k)^*$.
\end{proof}
\begin{remark}
According to theorem 5.4 in \cite{marconnet}, the pairing between the $W_k$ can be extended to a pairing between
the $(A^!_k)^*$, but this pairing is not necessarily perfect. Only in the AS-Gorenstein (Koszul twisted Calabi-Yau) case this pairing becomes perfect because then $(A^!_k)^*$ and $W_k$ coincide. 
\end{remark}

\begin{proof}[Proof of Theorem \ref{kosthm}]
  If \eqref{ucplx} is a resolution of $A$ then
  since all of the differentials have degree $+1$ with respect to the
  grading of $A$, this would imply (by one definition of Koszulity)
  that $A$ is Koszul, and that \eqref{ucplx} is a Koszul resolution of
  $A$ (more generally, for any graded algebra, any free bimodule
  resolution of $A$ with differentials of positive degree must be
  minimal and unique).  Then, by Lemma \ref{selfdual}, $A$ is
  Calabi-Yau as well.

  Conversely, suppose that $A$ is CY($n$) and Koszul. Using the
  CY($n$) property, \cite[Theorem A.5.2]{bocklandt} shows that there
  is a trace function $\Tr: \Ext^n_A(S, S) \rightarrow \C$ such that
 \begin{equation}\label{trssym}
\Tr(\alpha * \beta) = (-1)^{k(n-k)} \Tr(\beta * \alpha), \alpha \in \Ext^k(S,S), \beta \in \Ext^{n-k}(S,S)
\end{equation} 
induces a perfect pairing, where $*$
denotes the Yoneda cup product.  Using the Koszul property, 
we may identify $\Ext^n(S, S)$ with a quotient of $(W^*)^{\otimes n}$,
so that a trace function becomes canonically an element $\omega \in W^{\otimes n}$.  Then, \eqref{trssym} says precisely that $\omega$ is a superpotential.

By nondegeneracy, the trace pairing induces an isomorphism
$\Ext^2(S, S) \cong \Ext^{n-2}(S, S)^*$.  Furthermore,
$\Ext^2(S, S) \cong R$, so this isomorphism translates into the
statement that $W_2 = R$.  Thus, $A \cong \cD(\omega, n-2)$.

Moreover, for the same reason, $\Ext^i(S, S) \cong W_i$ for all $i$, and
hence \eqref{ucplx} must be exact.  Thus it is the minimal=Koszul resolution
of $A$.
\end{proof}
\begin{remark}
  This theorem is a generalization of Theorem \ref{maintheorem} (at
  least in the nontwisted case): to obtain Theorem \ref{maintheorem},
  we combine Theorem \ref{kosthm} and Lemma \ref{mckaykoszul}.
\end{remark}

Finally, we explain briefly how to generalize to $N$-Koszul and
twisted Calabi-Yau algebras.  First, for the twisted Calabi-Yau and
twisted superpotential setting, all that changes is that the twisted
superpotential property proves a twisted self-duality in Lemma
\ref{selfdual}, and conversely in the proof of Theorem \ref{kosthm}.

Next, for the $N$-Koszul setting, first recall \cite{berger} that an
$N$-Koszul algebra is an algebra $A$ presented by homogeneous relations of degree $N$ so that there is a free resolution of $A$ with differentials of degrees
alternating between $N-1$ and $1$:
\begin{equation*}
\cdots \rightarrow A \otimes Y_2 \otimes A \mathop{\rightarrow}^{d_2} A \otimes Y_1 \otimes A \mathop{\rightarrow}^{d_1} A \otimes A \mathop{\rightarrow}^m A \rightarrow 0,
\end{equation*}
where $d_{i}$ has degree $1$ if $i$ is odd, and $N-1$ if $i$ is even.  

In the $N$-Koszul setting, it is natural to define $N$-complexes of bimodules instead of complexes \cite{wambst}. These are sequences of maps $\cK_i\stackrel{d}{\to} \cK_{i-1}$ such that $d^N=0$ (instead of $d^2$). As in the ordinary Koszul case
one can define a selfdual bimodule $N$-complex 
\begin{equation} 
\cW^\bullet :=  0 \rightarrow A \otimes W_{|\omega|} \otimes A \mathop{\rightarrow}^{d_{|\omega|}} A \otimes W_{|\omega|-1} \otimes A
  \rightarrow \cdots \rightarrow  A \otimes W_1 \otimes A \mathop{\rightarrow}^{d_1} A \otimes W_0 \otimes A \rightarrow 0,
\end{equation}
where $d_i = \varepsilon_i (\spl_{L} + (q)^{i} \spl_{R})|_{A \otimes W_i \otimes A}$, and $q$ is the primitive $N^th$
root of $1$. 
This $N$-complex is a subcomplex of the Koszul $N$-complex as defined in \cite{marconnet}. 

The $N$-complexes can be contracted to obtain complexes. In the case of our selfdual $N$-complex we get
\begin{equation} \label{uncplx}
0 \rightarrow A \otimes W_{mN + 1} \otimes A \rightarrow A \otimes W_{mN} \otimes A \rightarrow A \otimes W_{(m-1)N+1} \otimes A \rightarrow \cdots \rightarrow A \otimes W_N \otimes A \rightarrow A \otimes W_1 \otimes A \rightarrow A \otimes W_0 \otimes A  \rightarrow 0,
\end{equation}
where the differentials alternate between
$\pm(\spl_{L}^{N-1} + \spl_{L}^{N-2} \spl_R + \cdots + \spl_{L}
\spl_R^{N-2} + \spl_R^{N-1})$
and $\pm(\spl_L - \spl_R)$.  
This is not selfdual anymore unless we are in the case that $|\omega|=mN+1$ for some $m \in N$.

The $N$-Koszul generalization of theorem \ref{kosthm} becomes
\begin{theorem}
An algebra $T_S W / \langle R \rangle$ 
is $N$-Koszul and twisted Calabi-Yau iff it is of the form $\cD(\omega, k)$ for a twisted superpotential $\omega$ (with the same twisting) and
the corresponding complex \eqref{uncplx} is exact in positive degree and $H^0(\cW^\bullet)=A$.  In this case, \eqref{uncplx} is the $N$-Koszul resolution of $A$, and is twisted self-dual.
\end{theorem}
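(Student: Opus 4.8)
The plan is to run the proof of Theorem \ref{kosthm} essentially verbatim, making the two substitutions the paper has already set up: replacing the ordinary bimodule complex by the contracted $N$-complex \eqref{uncplx}, and replacing plain self-duality, superpotential, and Calabi-Yau by their $\sigma$-twisted analogues. So I would state at the outset that the argument is parallel to the $N=2$, untwisted case and only indicate the modifications.

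For the ``if'' direction, suppose $A = \cD(\omega,k)$ with $\omega$ a twisted superpotential and that \eqref{uncplx} is exact in positive degrees with $H^0(\cW^\bullet)=A$. First I would note that, by construction, the differentials of \eqref{uncplx} alternate in degree between $N-1$ and $1$; since these are bimodule maps between free bimodules and every differential raises degree, an exact complex of this shape is by definition (\cite{berger}) an $N$-Koszul resolution, and such a resolution is automatically minimal and hence unique, exactly as in the $N=2$ case. This gives $N$-Koszulity and identifies \eqref{uncplx} as the $N$-Koszul resolution of $A$. For the twisted Calabi-Yau conclusion I would invoke the $N$-complex version of Lemma \ref{selfdual}: the twisted superpotential property makes the pairing $\langle \xi,\eta\rangle = [(\xi \otimes \eta)\omega]$ into a $\sigma$-twisted supersymmetric perfect pairing, yielding a twisted self-duality of the $N$-complex $\cW^\bullet$ and hence, after contraction, of \eqref{uncplx}, which is precisely the twisted Calabi-Yau condition.

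For the ``only if'' direction, suppose $A$ is $N$-Koszul and twisted Calabi-Yau of dimension $n$. I would first extend \cite[Theorem A.5.2]{bocklandt} to the twisted setting: the twisted CY($n$) property supplies a trace $\Tr: \Ext^n_A(S,S) \to \C$ whose associated Yoneda pairing $\Ext^k(S,S) \otimes \Ext^{n-k}(S,S) \to \C$ is perfect and $\sigma$-twisted supersymmetric in the sense of \eqref{trssym}. Using $N$-Koszulity I would identify $\Ext^n(S,S)$ with a subquotient of $(W^*)^{\otimes |\omega|}$, so that the trace becomes canonically an element $\omega \in W^{\otimes |\omega|}$; the twisted supersymmetry of the pairing then reads off exactly as the twisted superpotential condition on $\omega$. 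Nondegeneracy of the pairing identifies $\Ext^i(S,S) \cong W_i$ for every $i$; in particular $W_N = \Ext^2(S,S) \cong R$ (recall the relations have degree $N$), so $A \cong \cD(\omega, |\omega|-N)$, and the identification of every $\Ext^i$ with $W_i$ forces \eqref{uncplx} to be exact, hence to be the minimal $N$-Koszul resolution.

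The hard part will be the bookkeeping around the $N$-complex and its contraction: making precise that $d^N = 0$ for $\cW^\bullet$ with coefficients $\varepsilon_i(\spl_L + q^i \spl_R)$, that its contraction is exactly \eqref{uncplx} with the stated alternating differentials $\pm(\spl_L^{N-1}+\cdots+\spl_R^{N-1})$ and $\pm(\spl_L-\spl_R)$, and that twisted self-duality survives contraction only in the case $|\omega| = mN+1$ for some $m \in \N$, which must be reconciled with the length of a twisted Calabi-Yau resolution (this is what forces $|\omega|=mN+1$ in the theorem). The other delicate point is tracking the automorphism $\sigma$ (and the induced twist $\tau^*$) consistently through the Yoneda pairing, the identification $\Ext^i(S,S) \cong W_i$, and the duality isomorphism $W_{|\omega|-i}^* \iso W_i$; once these compatibilities are established, the remainder is a routine transcription of the untwisted, $N=2$ argument of Theorem \ref{kosthm}.
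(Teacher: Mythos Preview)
Your proposal is correct and follows essentially the same approach as the paper, which in fact gives only a one-sentence proof sketch (``the same arguments as in the proof of Theorem \ref{kosthm} but adapted to the $N$-Koszul situation in accordance with the results from \cite{berger} and \cite{marconnet}''). Your write-up is considerably more detailed than what the paper provides, and you have correctly identified the key adaptations---the contracted $N$-complex, the constraint $|\omega|=mN+1$ needed for self-duality to survive contraction, and the tracking of the twist $\sigma$ through the Yoneda pairing.
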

The proof of the theorem uses basically the same arguments as in the proof of theorem \ref{kosthm} but adapted
to the $N$-Koszul situation in accordance with the result from \cite{berger} and \cite{marconnet}.

\section{Sklyanin algebras}\label{sklyanin}

In this section we show how to compute the superpotential for the
four-dimensional Sklyanin algebras as introduced by Sklyanin in
\cite{Skly1, Skly2}.  These algebras may be thought of as ``elliptic deformations'' of the polynomial algebra in four variables, and they are in particular
Koszul and have the same Hilbert series $\frac{1}{(1-t)^4}$ as the
polynomial ring.  

Following \cite{SS}, \S 0, fix values $\alpha, \beta,$ and
$\gamma$ satisfying\footnote{In the original form \cite{Skly1, Skly2},
  see also e.g.~\cite{OF89, Sm}, not all values
  $\alpha, \beta, \gamma$ satisfying this equation are
  considered---only those that arise from an elliptic curve and a
  point of that curve. By, e.g., \cite{SS}, these are the values where 
\eqref{nondegwk} holds and $\alpha, \beta, \gamma \neq 0$; cf.~Theorem
\ref{sszthm}.}
\begin{equation} \label{abgsolns}
\alpha + \beta + \gamma + \alpha \beta \gamma = 0.
\end{equation}
Then, the algebra $A$ is defined by
\begin{equation*}
A := \C \langle x_0, x_1, x_2, x_3 \rangle / I,
\end{equation*}
where $I$ is the two-sided ideal generated by the relations $r_i, s_i$,\footnote{Our notation $r_i$ is for the relation involving $x_0 x_i - x_i x_0$, and $s_i$ is the relation involving $x_0 x_i + x_i x_0$.}
\begin{gather*}
r_1 := x_0 x_1 - x_1 x_0 - \alpha (x_2 x_3 + x_3 x_2), \quad s_1 := x_0 x_1 + x_1 x_0 - (x_2 x_3 - x_3 x_2), \\
r_2 := x_0 x_2 - x_2 x_0 - \beta (x_3 x_1 + x_1 x_3), \quad s_2 := x_0 x_2 + x_2 x_0 - (x_3 x_1 - x_1 x_3), \\
r_3 := x_0 x_3 - x_3 x_0 - \gamma (x_1 x_2 + x_2 x_1), \quad s_3 := x_0 x_3 + x_3 x_0 - (x_1 x_2 - x_2 x_1).
\end{gather*}

We would like to find a superpotential for $A$.  This must be a
supercyclic element of $I$ which is homogeneous of degree four.  It is
easy to compute that, under the assumption
\begin{equation} \label{nondegwk}
(\alpha, \beta, \gamma) \notin \{(\alpha, -1, 1), (1, \beta, -1),
  (-1, 1, \gamma)\},
\end{equation}
the
space of such elements is one-dimensional and spanned by the following
element:
\begin{gather} \label{sklypot}
\omega := \kappa_1(r_1 s_1 + s_1 r_1) + \kappa_2(r_2 s_2 + s_2 r_2) + \kappa_3 (r_3 s_3 + s_3 r_3),
\intertext{where $(\kappa_1, \kappa_2, \kappa_3) \neq (0,0,0)$ is determined up to
a nonzero multiple by}
  \kappa_1(1+\alpha) = \kappa_3(1-\gamma), \quad \kappa_1(1-\alpha) = \kappa_2(1+\beta), \quad \kappa_2(1-\beta) = \kappa_3(1+\gamma).
\end{gather}
\begin{proposition} The element $\omega$ is a superpotential.
  Moreover, for any $\alpha, \beta, \gamma$ satisfying
  \eqref{nondegwk}, $A \cong \cD(\omega, 2)$, and in this case, the
  resolution \eqref{ucplx} is a self-dual resolution of $A$, making
  $A$ Calabi-Yau.
\end{proposition}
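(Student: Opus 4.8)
The statement has three parts: that $\omega$ is a superpotential, that $A\cong\cD(\omega,2)$, and that \eqref{ucplx} is an exact self-dual resolution (whence $A$ is Calabi--Yau). I would treat them in this order.

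\emph{$\omega$ is a superpotential.} Since $|\omega|=4$, the super-cyclic condition is $[\psi\omega]=-[\omega\psi]$ for all $\psi\in V^*$ (the sign being $(-1)^{|\omega|-1}$). The plan is to check this on the basis $\psi=x_a^*$. As $\omega=\sum_i\kappa_i(r_is_i+s_ir_i)$ is a sum of products of two quadratics, a one-sided derivative only reaches the adjacent factor, so $[x_a^*\omega]=\sum_i\kappa_i\big([x_a^*r_i]\,s_i+[x_a^*s_i]\,r_i\big)$ and $[\omega x_a^*]=\sum_i\kappa_i\big(r_i\,[s_ix_a^*]+s_i\,[r_ix_a^*]\big)$, both cubic. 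Expanding in the monomial basis and matching the coefficients of the resulting cubic monomials, the identity holds coefficient-by-coefficient precisely when the three relations $\kappa_1(1+\alpha)=\kappa_3(1-\gamma)$, $\kappa_1(1-\alpha)=\kappa_2(1+\beta)$, $\kappa_2(1-\beta)=\kappa_3(1+\gamma)$ of \eqref{sklypot} hold. I would record alongside this that the homogeneous system for $(\kappa_1,\kappa_2,\kappa_3)$ has determinant $(1+\alpha)(1+\beta)(1+\gamma)-(1-\alpha)(1-\beta)(1-\gamma)=2(\alpha+\beta+\gamma+\alpha\beta\gamma)$, which vanishes exactly under \eqref{abgsolns}; this explains simultaneously why such an $\omega$ exists and why it is unique up to scale.

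\emph{$A\cong\cD(\omega,2)$.} It suffices to show $W_2=R$ inside $V^{\otimes 2}$, since then $\cD(\omega,2)=\C Q/\<W_2\>=\C Q/\<R\>=A$. The inclusion $W_2\subseteq R$ is formal: because $\omega$ is a sum of products of two quadratics, any length-two derivative satisfies $\partial_p(r_is_i)=c_p^{(i)}s_i$ and $\partial_p(s_ir_i)=d_p^{(i)}r_i$, where $c_p^{(i)},d_p^{(i)}$ are the coefficients of the monomial $p$ in $r_i,s_i$; hence each $\partial_p\omega$ is a linear combination of the $r_i,s_i$. For the reverse inclusion I would display generators explicitly, e.g. $\partial_{x_0x_1}\omega=\kappa_1(r_1+s_1)$ and $\partial_{x_1x_0}\omega=\kappa_1(r_1-s_1)$, together with the analogous pairs built from $x_0x_i,x_ix_0$; these recover every $r_i,s_i$ as soon as $\kappa_i\neq 0$. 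The nonvanishing of all $\kappa_i$ is exactly where \eqref{nondegwk} is used: a vanishing $\kappa_i$ would force $(\alpha,\beta,\gamma)$ into one of the excluded triples via the three relations above. Thus $R\subseteq W_2$, giving equality.

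\emph{Resolution, self-duality, Calabi--Yau.} Self-duality of \eqref{ucplx} is immediate from Lemma \ref{selfdual} once $\omega$ is known to be a superpotential, so the real content is exactness. Here I would import the classical facts recalled at the start of the section: $A$ is Koszul with Hilbert series $1/(1-t)^4$, so its Koszul dual has Hilbert series $(1+t)^4$ and the Koszul bimodule complex $\cK^\bullet$, with terms $A\otimes(A^!_k)^*\otimes A$ of ranks $1,4,6,4,1$, is a genuine resolution of $A$. By Lemma \ref{subcomplex}, \eqref{ucplx} embeds into $\cK^\bullet$ with $W_k\subseteq(A^!_k)^*$, so it is enough to match dimensions. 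The ranks for $k=0,2,4$ are already in hand ($W_0=S$, $W_2=R$ of dimension $6$, $W_4=\C\omega$). For $k=1$ I would use the associativity identity to get $W_1\supseteq\{[\phi\,w]:\phi\in V^*,\ w\in W_2\}$ and observe that $[x_0^*r_i]=x_i$ for $i=1,2,3$ together with $[x_1^*r_1]=-x_0$ already span $V$, so $W_1=V$ has dimension $4$. Self-duality (Lemma \ref{selfdual}) then forces $\dim W_3=\dim W_1=4=\dim(A^!_3)^*$, whence $W_3=(A^!_3)^*$ as well. Therefore \eqref{ucplx} coincides with $\cK^\bullet$, is exact with $H^0=A$, and by Theorem \ref{kosthm} is the self-dual Koszul resolution of $A$; in particular $A$ is Calabi--Yau.

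The step I expect to be the genuine obstacle is exactness of \eqref{ucplx}, which the authors themselves flag as subtle right after Theorem \ref{kosthm}. My plan deliberately avoids proving exactness from scratch: it leans on the externally known Koszulity and Hilbert series of the Sklyanin algebra to identify \eqref{ucplx} with the full Koszul complex, reducing everything to the single rank computation $\dim W_1=\dim W_3=4$. An intrinsic argument establishing that \eqref{ucplx} is a resolution purely from regularity properties of the potential $\omega$, without invoking the a priori good behaviour of $A$, would be substantially harder.
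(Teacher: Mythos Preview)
Your proposal is correct and follows essentially the same route as the paper: verify the superpotential condition directly, use $\kappa_i\neq 0$ under \eqref{nondegwk} to get $W_2=R$ and hence $A\cong\cD(\omega,2)$, then invoke the Smith--Stafford Koszulity theorem together with $H(A^!,t)=(1+t)^4$ and Lemma~\ref{subcomplex} to identify \eqref{ucplx} with the Koszul complex by a dimension count (with self-duality handling $W_3$). Your write-up is more explicit---the determinant calculation for existence of $\kappa$, the specific derivatives recovering each $r_i,s_i$, the explicit witnesses for $W_1=V$---but the architecture is the same as the paper's.
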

\begin{proof}
  It is easy to verify that $\omega$ is a superpotential (in fact,
  it makes sense and is cyclically supersymmetric even if \eqref{nondegwk}
  is not satisfied).  Next,
  suppose \eqref{nondegwk} holds.  Then, $\kappa_1, \kappa_2,$ and $\kappa_3$
  are nonzero.  Since the elements $r_i, s_i$ are linearly
  independent, it follows that $A \cong \cD(\omega, 2)$.

  To deduce that \eqref{ucplx} is a resolution of $A$, we make use of
  the
\begin{theorem}\cite{SS}
  Assuming \eqref{nondegwk}, $A$ is
  Koszul.  Moreover, $H(A^!, t) = (1+t)^4$.
\end{theorem}
In the above theorem, $H(V, t)$ denotes the Hilbert series of a graded
vector space $V$, i.e., $H(V,t) = \sum_{m \geq 0} \dim V(m) t^m$. The
hard part of the above theorem is the Koszulity.

Now, by Lemma \ref{subcomplex} and the formula for the Koszul complex
(see the proof of Lemma \ref{subcomplex}), it suffices only to show
that $\dim W_i = \binom{n}{i}$ for all $i$. For $i = 2$, this follows
from the above observations; then, it follows by applying partial
derivatives to the relations $r_j, s_j$ that this is true for $i=1$.
Since $i=0$ is obvious, we get $\dim W_i = \binom{n}{i}$ for all
$i$ by the self-duality of $\cW^{\bullet}$.  Thus, $A$ is Calabi-Yau
with self-dual resolution $\cW^{\bullet}$.
\end{proof}

\begin{remark}
  It is also easy to derive that $A$ is Calabi-Yau directly from
  \cite{SS}: in particular, in \cite{SS} it is shown that $A^!$ is
  Frobenius, one may easily show that $A^!$ is in fact symmetric.  Our
  contribution here is in producing a superpotential and showing that
  the minimal (Koszul) resolution of $A$ is produced in this way.
\end{remark}

\subsubsection{Modified Sklyanin algebras from \cite{Staff}} \label{modskly}
In \cite{Staff}, some new algebras related to the above are defined
and shown to be Koszul, and have the same Hilbert series
$\frac{1}{(1-t)^4}$ as the polynomial ring in four variables.  Here,
we explain that these algebras are not Calabi-Yau, but rather twisted
Calabi-Yau, with twisted superpotential described below.  We omit
the proofs, which are the same as for the Sklyanin algebra. Following
\cite{Staff}, let us assume in this subsection that
$\{\alpha, \beta, \gamma\} \cap \{0, 1, -1\} = \emptyset$.

Heuristically, these algebras are ``elliptic deformations'' of the algebra
$\C \langle x_0, x_1, x_2, x_3 \rangle / ( -x_0^2 + x_1^2 + x_2^2 +
x_3^2, x_i x_j - x_j x_i \mid \{i,j\} \neq \{2,3\} )$
in the same way that the Sklyanin algebras are deformations of
$\C[x_0, x_1, x_2, x_3]$.

Precisely, the relations are given by using any five of the relations
$r_i, s_j$, and replacing the sixth with the new relation
$q := d_1 \Omega_1 + d_2 \Omega_2$, where
\begin{equation*}
  \Omega_1 := -x_0^2 + x_1^2 + x_2^2 + x_3^2, \quad 
\Omega_2 := x_1^2 + \frac{1+\alpha}{1-\beta} x_2^2 + 
\frac{1-\alpha}{1+\gamma} x_3^2.
\end{equation*}
We obtain the algebra
$A' = \C \langle x_0, x_1, x_2, x_3 \rangle / I'$, where $I'$ is the
ideal generated by $q$ and five of the $r_i, s_j$.  (The geometric
motivation for studying $A'$ is that it and the Sklyanin algebra $A$
both surject to the same ring $B := A/(\Omega_1, \Omega_2)$ of
geometric origin.)

%By \cite[\S 0.3]{Staff}, provided $A'$ is Koszul, and as above,
%$h((A')^{!};t) = (1+t)^4$.  
First, suppose that the relations are $q, r_2, r_3, s_1, s_2, s_3$ (so $r_1$ is not a relation). We claim that $A$ is twisted Calabi-Yau with twisting
$\sigma(x_0)= -x_0, \sigma(x_1)=-x_1, \sigma(x_2)=x_2,
\sigma(x_3)=x_3$,
and with unique twisted superpotential (up to scaling) given by
\begin{gather*}
\lambda_1 (q s_1 + s_1 q) + \lambda_2 (r_2 r_3 - r_3 r_2) + \lambda_3 (s_2 s_3 - s_3 s_2), 
\intertext{with $(\lambda_1: \lambda_2: \lambda_3) \in \PP^2$ determined by}
%d_2 \lambda_1 = \lambda_2(\beta \gamma +1), \quad 
%\intertext{and}
%\lambda_1 (q r_1 + r_1 q) + \lambda_2 (r_2 s_3 - s_3 r_2) + \lambda_3 (s_2 r_3 - r_3 s_2), \quad 
%\intertext{with $(\lambda_1: \lambda_2: \lambda_3) \in \PP^2$ determined by}
d_2 \lambda_1 = \lambda_2 (\beta \gamma + 1), \quad d_1 \lambda_1 = - \lambda_2 + \lambda_3,
\end{gather*}
provided that any nonzero solution to the above has all of
$\lambda_1, \lambda_2, \lambda_3$ nonzero, i.e., $(d_1, d_2)$ is not a
multiple of $(1,0)$ or $(1,-1-\beta \gamma)$.

Next, suppose that the relations are $q, r_1, r_2, r_3, s_2, s_3$ (so $s_1$ is not a relation).  Then, $A$ is twisted Calabi-Yau with the same twisting as above, and the unique superpotential (up to scaling) is given by
\begin{gather*}
\lambda_1 (q r_1 + r_1 q) + \lambda_2 (r_2 s_3 - s_3 r_2) + \lambda_3 (s_2 r_3 - r_3 s_2), 
\intertext{with $(\lambda_1: \lambda_2: \lambda_3) \in \PP^2$ determined by}
%d_2 \lambda_1 = \lambda_2(\beta \gamma +1),
%\intertext{and}
%\lambda_1 (q r_1 + r_1 q) + \lambda_2 (r_2 s_3 - s_3 r_2) + \lambda_3 (s_2 r_3 - r_3 s_2), \quad 
%\intertext{with $(\lambda_1: \lambda_2: \lambda_3) \in \PP^2$ determined by}
\alpha d_1 \lambda_1 = \lambda_2 - \lambda_3, \quad \alpha(d_1+d_2)\lambda_1 = \beta \lambda_2 + \gamma \lambda_3,
\end{gather*}
again provided all of $\lambda_1, \lambda_2, \lambda_3$ can be nonzero
(i.e., $(d_1, d_2)$ is not a multiple of $(1,\beta-1)$ or $(1,-1-\gamma)$).
Any other $A'$ can be obtained from this or the previous paragraph by a cyclic permutation of the parameters and relations.

Finally, in \cite{Staff}, also the algebra
$A'_\infty = \C\langle x_0, x_1, x_2, x_3 \rangle /
(r_2,s_2,r_3,s_3,\Omega_1, \Omega_2)$
is studied, and shown to be Koszul and have the same Hilbert series as
the polynomial ring in four variables (just as in all the other
examples).  We claim that this algebra is twisted Calabi-Yau with
twisting $\sigma(x_i) = -x_i$ for all $i$.  In other words, the
twisted superpotential $\omega$ (which is unique up to scaling) is
actually cyclically symmetric.  We omit the formula for the
twisted superpotential.

%Then, as with $A$, there a unique superpotential for $A'$, up to
%scalar (which endows it with a Calabi-Yau structure).  It is given by
%the formula
%\begin{equation}
%fill in
%\end{equation}
% Let $f \in \{r_i, s_j\}$ be the discarded relation.  Then call the
% above algebra $A_{f,d_1,d_2}$.  (The motivation for this is that
% $\Omega_1, \Omega_2$, viewed as elements of the Sklyanin algebra
% $A$, are central, and both $A$ $A / (\Omega_1, \Omega_2)$.)
%\end{remark}

\subsection{McKay correspondence for four-dimensional Sklyanin algebras}
It makes sense to think of the potential \eqref{sklypot} as a deformed
version of the volume form in the case of the polynomial algebra in
four variables: precisely, by \S 6, the potential for a Koszul
Calabi-Yau algebra always spans the top Hochschild homology group
(here, $HH_4(A,A)$) as a free bimodule (cf.~\cite[Proposition
10]{dbv}).  As explained in \textit{op. cit.}, there is an action of
the finite Heisenberg group on $A$ by automorphisms.\footnote{Note
  that the analogue of $\SL(4)$ in this context is a quantum version
  of $\SL(4)$ associated to an elliptic $R$-matrix; see
  e.g.~\cite{Skly1, Skly2, OF89, con1,con2}.}
In fact, the full automorphism group preserving \eqref{sklypot} is
finite:
\begin{theorem}\cite[\S 2]{ssz} \label{sszthm} Assume that 
  $\alpha, \beta, \gamma \neq 0$ and \eqref{nondegwk} holds.  Then,
  the group of graded automorphisms of $A$ ($\subset \Aut(V)$) is
  isomorphic to $\widetilde H$,
\begin{equation}
1 \rightarrow \C^\times \rightarrow 
 \widetilde H \rightarrow (\Z/4 \oplus \Z/4) \rightarrow 1,
\end{equation}
except in the case $\alpha = \beta = \gamma = \pm \sqrt{-3}$, when the group
has the form $\widetilde H \ltimes \Z/3$.
\end{theorem}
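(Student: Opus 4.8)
The plan is to identify the graded automorphism group with the theta (Heisenberg) group of a degree-four line bundle on the point-scheme elliptic curve of $A$. Since $A$ is generated in degree one, any graded automorphism $\phi$ is determined by its restriction to $V = A_1 = \langle x_0, x_1, x_2, x_3\rangle$, and this restriction lies in $\{g \in GL(V) : (g \otimes g)R = R\}$ because $\phi$ must preserve the space of quadratic relations $R$. Thus the restriction map is an isomorphism $\Aut_{gr}(A) \cong \{g \in GL(V) : (g \otimes g)R = R\}$, and I would compute the right-hand side geometrically. By the Artin--Tate--Van den Bergh theory of the four-dimensional Sklyanin algebra, under \eqref{nondegwk} the algebra $A$ is the twisted homogeneous coordinate ring attached to a triple $(E, \sigma, L)$, where $E \subset \PP(V^*) = \PP^3$ is an elliptic curve embedded by a complete linear system $|L|$ with $\deg L = 4$ (the point scheme of $A$ being $E$ together with four extra points), and $\sigma = \tau_\eta$ is translation by a point $\eta$ encoding the multiplication. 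Consequently a graded automorphism is exactly an element of $GL(V)$ inducing a projective automorphism of $\PP^3$ that preserves the embedded curve $E$ and commutes with $\sigma$, together with the scalar matrices $\C^\times$ forming the kernel of $GL(V) \to PGL(V)$.

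I would then compute this group of geometric automorphisms. An automorphism of the embedded curve factors as a translation $\tau_t$, $t \in E$, followed by an element $\theta \in \Aut(E,0)$; preserving the embedding forces $\theta^* L \cong L$, and commuting with the fixed translation $\sigma$ forces the $\Aut(E,0)$-part to fix $\eta$. The translations preserving $L$ up to isomorphism are precisely $K(L) = \ker(\phi_L) = E[4] \cong \Z/4 \oplus \Z/4$, and each such $t$ lifts to a linear automorphism of $H^0(E, L) \cong V^*$ that is well defined only up to a scalar. The collection of all such lifts is by definition the theta group $\cG(L)$, fitting into $1 \to \C^\times \to \cG(L) \to K(L) \to 1$; this is precisely the asserted extension $\widetilde H$. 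That $\widetilde H$ is a genuine (nonabelian) Heisenberg group, rather than an abelian extension, follows from the nondegeneracy of its commutator pairing $K(L) \times K(L) \to \C^\times$, which is the Weil pairing $e^L$.

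The remaining, and most delicate, issue is the contribution of $\Aut(E,0)$, and this is exactly the source of the exceptional case. Generically $\Aut(E,0) = \{\pm 1\}$, and $-1$ does not fix $\eta$, so no nontrivial group automorphism survives and $\Aut_{gr}(A) = \widetilde H$. The symmetric solutions of \eqref{abgsolns} with $\alpha, \beta, \gamma \neq 0$ are exactly $\alpha = \beta = \gamma = \pm \sqrt{-3}$, and one checks directly that precisely when $\alpha = \beta = \gamma$ the cyclic permutation $x_1 \mapsto x_2 \mapsto x_3 \mapsto x_1$ (fixing $x_0$) carries $r_i \mapsto r_{i+1}$ and $s_i \mapsto s_{i+1}$, hence is an order-three graded automorphism lying outside $\widetilde H$. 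Geometrically this forces $E$ to have $j$-invariant $0$, so that $\Aut(E,0) \supset \Z/3$ with $\eta$ a fixed point; this $\Z/3$ normalizes the translation subgroup $K(L)$ and lifts to $GL(V)$, producing the semidirect product $\widetilde H \ltimes \Z/3$.

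The main obstacle is the verification, for generic parameters, that nothing beyond $\C^\times$ and the translations $K(L)$ can occur: one must rule out any group automorphism, other than the identity (generically) or the order-three symmetry (in the exceptional case), that simultaneously fixes $\eta$, preserves $L$, and lifts linearly. This reduces to controlling $\Aut(E,0)$ and the action of $\eta$ in terms of $(\alpha, \beta, \gamma)$, together with checking that the exceptional $\Z/3$ really does lift to a linear (not merely projective) automorphism compatible with the chosen linearization of $L$. These are explicit but delicate computations with the theta structure on the degree-four bundle, and are where the precise value $\pm\sqrt{-3}$ and the constraint \eqref{abgsolns} enter.
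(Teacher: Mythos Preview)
The paper does not contain a proof of this theorem: it is quoted verbatim from Smith--Staniszkis \cite[\S 2]{ssz}, and the paper simply uses the result (and the explicit matrices reproduced later in \eqref{rho0}) without reproving it. So there is no proof in the present paper to compare your attempt against.

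That said, your sketch is essentially the argument one finds in \cite{ssz}: reduce to linear maps preserving the relation space, pass to the point scheme, and identify the translation part with the theta group of the degree-four line bundle on $E$. A few points deserve tightening. First, $A$ is not literally the twisted homogeneous coordinate ring $B(E,\sigma,L)$; rather $A$ surjects onto it with kernel generated by the two central quadrics $\Omega_1,\Omega_2$. This does not damage your argument, since graded automorphisms still act on the point scheme, but the identification of $\Aut_{gr}(A)$ with automorphisms of the geometric data requires the extra step of showing that any $g\in GL(V)$ preserving $R$ also preserves the pair $(\Omega_1,\Omega_2)$ up to linear combination (equivalently, preserves the four isolated point modules). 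Second, your assertion that ``$-1$ does not fix $\eta$'' is exactly the statement that $\eta$ is not $2$-torsion, and you should say why the hypotheses $\alpha,\beta,\gamma\neq 0$ and \eqref{nondegwk} guarantee this; this is where those hypotheses enter. Third, in the exceptional case you correctly exhibit the cyclic permutation of $x_1,x_2,x_3$ as an explicit order-three automorphism when $\alpha=\beta=\gamma$, but the geometric claim that this forces $j(E)=0$ and that $\eta$ is a fixed point of the $\Z/3$ action needs a direct check rather than an appeal to plausibility. None of these are fatal; they are precisely the ``explicit but delicate computations'' you flag at the end, and they are carried out in \cite{ssz}.
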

By the explicit matrices given in \cite[\S 2]{ssz} (see also the end
of \S \ref{mss}), one may easily compute that the $\widetilde H$ above
has the form
$\widetilde H \cong (\C^\times \times \langle X, Y\rangle)/ ([X,Y] =
\sqrt{-1})$,
by picking lifts $X, Y$ of generators of the quotient
$\widetilde H/\C^\times \cong \Z/4 \oplus \Z/4$.

\begin{notation} We call the subgroup of $\Aut(V)$ preserving
  $\omega \in V^{\otimes 4}$ the automorphism group of $\omega$, and
  denote it by $\Aut(\omega)$.
\end{notation}
The only elements of $\C^\times$ that act trivially on $V^{\otimes 4}$
are fourth roots of unity. As a result, the automorphism group of the
superpotential $\omega$ will be finite, of size only $64$.  It turns
out this is one of the ``$\Z/4$-Heisenberg groups,'' which we describe
as follows.  Let $X, Y \in \widetilde H$ be elements as in the previous
paragraph, chosen to have the property $X^4 = Y^4 = -1$.
Then, $H$ is the group generated by $X$ and $Y$.
  It is a central extension
\begin{equation}
  1 \rightarrow \mu_4 \rightarrow H \rightarrow \Z/4 \oplus \Z/4 \rightarrow 1,
\end{equation}
where $\mu_4 \subset \C^\times$ is the subgroup of elements of order four.
A presentation for $H$ is given by
\begin{equation}
  H \cong \langle X,Y,Z \mid XZ=ZX, YZ=ZY, Z^4 = 1, X^4=Y^4=Z^2, [X,Y]=Z \rangle.
\end{equation}
We deduce the following:
\begin{proposition}
  For any $\alpha, \beta, \gamma$ as in Theorem \ref{sszthm},
  $\Aut(\omega) \cong H$, unless $\alpha=\beta=\gamma = \pm
  \sqrt{-3}$,
  in which case this group is $H \rtimes \Z/3$, where $\Z/3$ acts
  nontrivially on $H$.
\end{proposition}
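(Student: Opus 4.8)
The plan is to realize $\Aut(\omega)$ as the kernel of the character by which the full graded automorphism group scales $\omega$, and then to identify this kernel group-theoretically with $H$ (resp.\ $H \rtimes \Z/3$). First I would set up the reduction. Since $A \cong \cD(\omega,2)$ by the preceding proposition, the space of relations $R = W_2 = \Image \Delta_2^\omega$ is recovered from $\omega$, and the assignment $\omega \mapsto W_2$ is $GL(V)$-equivariant (any $g$ sends $W_2(g\cdot\omega) = g^{\otimes 2}W_2(\omega)$). Hence every $g \in \Aut(\omega)$ preserves $R$ and so descends to a graded automorphism of $A$; thus $\Aut(\omega)$ is contained in the graded automorphism group, which is $\widetilde H$ by Theorem \ref{sszthm}. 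Conversely, because $\omega$ spans $HH_4(A,A)$ and is therefore the unique superpotential of $A$ up to scalar, every $g \in \widetilde H$ satisfies $g \cdot \omega = \chi(g)\omega$ for a single character $\chi \colon \widetilde H \to \C^\times$, and consequently $\Aut(\omega) = \ker\chi$ exactly.

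Next I would pin down $\chi$. The central $\C^\times \subset \widetilde H$ acts on $V$ by scalars, hence on $V^{\otimes 4}$ by the fourth power, so $\chi|_{\C^\times}(\lambda) = \lambda^4$; in particular $\ker(\chi|_{\C^\times}) = \mu_4$ and $\chi$ is surjective. It then follows formally that $\ker\chi$ is a central extension $1 \to \mu_4 \to \ker\chi \to \Z/4\oplus\Z/4 \to 1$ of order $64$, inheriting the Heisenberg commutator pairing $[X,Y] = \sqrt{-1}$ from $\widetilde H$. To identify the isomorphism type with $H$ I must show that a lift $x \in \ker\chi$ of a generator of $\Z/4 \oplus \Z/4$ satisfies $x^4 = -1$ rather than $x^4 = 1$ (the two central extensions compatible with this pairing). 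Writing $x = \mu X_0$, where $X_0$ is the corresponding generator of the Heisenberg representation normalized so that $X_0^4 = 1$, membership $x \in \ker\chi$ reads $\mu^4 \chi(X_0) = 1$, and then $x^4 = \mu^4 = \chi(X_0)^{-1}$. Thus everything reduces to the claim that $\chi = \det$ on $\widetilde H$, equivalently $\chi(X_0) = \det(X_0) = -1$ for the standard generators.

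This identification $\chi = \det$ is the main obstacle. Conceptually it is the statement that, for the Calabi--Yau algebra $A$ deforming $\C[V]$, the generator $\omega$ of the top Hochschild homology transforms by the Jacobian (determinant) character, exactly as the volume form $x_1\wedge\cdots\wedge x_4$ does in the classical limit; the character on the one-dimensional $HH_4$ is constant in the flat family, so it agrees with the commutative value $\det$. Alternatively, and most concretely, one verifies $\chi = \det$ by a finite computation: using the explicit matrices for $X_0, Y_0$ from \cite[\S 2]{ssz} and the explicit formula \eqref{sklypot} for $\omega$, one checks directly that $X_0 \cdot \omega = -\omega$ and $Y_0 \cdot \omega = -\omega$, matching $\det(X_0) = \det(Y_0) = -1$ (the determinant of a phase operator $\diag(1,\sqrt{-1},-1,-\sqrt{-1})$ or of a $4$-cycle shift is $-1$). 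Once $\chi = \det$ is established, choosing $\mu$ with $\mu^4 = -1$ produces lifts $x = \mu X_0,\ y = \mu Y_0 \in \ker\chi$ with $x^4 = y^4 = -1$ and $[x,y] = \sqrt{-1}$, which is precisely the presentation of $H$; comparing orders ($|H| = 64 = |\ker\chi|$) then gives $\Aut(\omega) = \ker\chi = H$.

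Finally, for the exceptional case $\alpha = \beta = \gamma = \pm\sqrt{-3}$, I would extend $\chi = \det$ to $\widetilde H \ltimes \Z/3$. The extra $\Z/3$ cyclically permutes the three coordinate directions while fixing $x_0$, hence lies in $\SL(V)$ and is killed by $\det$; so $\Z/3 \subseteq \ker\det = \Aut(\omega)$. Since $\Z/3$ acts on $\widetilde H$ by automorphisms preserving $\det$, it normalizes $H = \ker(\det|_{\widetilde H})$, yielding $\Aut(\omega) = \ker(\det|_{\widetilde H \ltimes \Z/3}) = H \rtimes \Z/3$, with the $\Z/3$ acting nontrivially by permuting the three Heisenberg directions, as claimed.
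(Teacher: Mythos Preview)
Your approach---realizing $\Aut(\omega)$ as $\ker\chi$ for the scaling character $\chi\colon\Aut(A)\to\C^\times$ and then pinning $\chi$ down on generators---is precisely what the paper's very terse deduction is doing implicitly, and your treatment of the generic case is correct and more carefully argued than what the paper records. The direct check that $\chi(X_0)=\chi(Y_0)=-1$ on the explicit Heisenberg matrices from \cite{ssz} is the computation that carries the weight; your deformation heuristic for $\chi=\det$ is suggestive but not a proof as stated, since the polynomial algebra does not sit in the Sklyanin family parametrized by $(\alpha,\beta,\gamma)$, and (as the next paragraph shows) the identity $\chi=\det$ actually fails once one leaves $\widetilde H$.

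There is a genuine gap in the exceptional case $\alpha=\beta=\gamma=\pm\sqrt{-3}$. The naive cyclic permutation $\sigma\colon x_0\mapsto x_0,\ x_1\mapsto x_2\mapsto x_3\mapsto x_1$ does lie in $\SL(V)$, but it does \emph{not} fix $\omega$. From the defining relations \eqref{sklypot} one finds $(\kappa_1,\kappa_2,\kappa_3)$ proportional to $(1,t^{-1},t)$ with $t=(1+\alpha)/(1-\alpha)$ a primitive cube root of unity; since $\sigma$ sends $(r_i,s_i)\mapsto(r_{i+1},s_{i+1})$, it cyclically permutes the three summands of $\omega$ and gives $\sigma\cdot\omega=t\,\omega$. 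Hence $\chi(\sigma)=t\neq 1=\det(\sigma)$, so $\chi=\det$ breaks down on $\widetilde H\ltimes\Z/3$ and your inclusion ``$\Z/3\subseteq\ker\det=\Aut(\omega)$'' is false as written. The repair is easy within your own framework: replace $\sigma$ by the scalar multiple $\sigma'=t^{2}\sigma$, for which $\chi(\sigma')=(t^{2})^{4}\cdot t=t^{9}=1$ and $(\sigma')^{3}=t^{6}=1$. Then $\sigma'\in\Aut(\omega)$ has order $3$, normalizes $H=\ker(\chi|_{\widetilde H})$ (characters are conjugation-invariant), and acts nontrivially on it; the order count $|\ker\chi|=4\cdot|(\widetilde H\ltimes\Z/3)/\C^\times|=192$ then forces $\Aut(\omega)=H\rtimes\Z/3$.
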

As a consequence, we see that, under the assumptions of Theorem
\ref{sszthm}, $\omega \otimes 1 \in A \# H$ is still a superpotential,
and hence also gives a superpotential for any Morita equivalent
algebra to $A \# H$.  Letting $f_1, \ldots, f_m$ be a full set of
primitive idempotents (one for each irreducible representation of
$H$), and $f := f_1 + \ldots + f_m$, we then have
\begin{proposition}
The algebra $f(A \# H)f$ is Calabi-Yau.  For any subgroup $G < \widetilde H$,
$f'(A \# G)f'$ is twisted Calabi-Yau, where $f'$ is the sum of a full set
of primitive idempotents for $G$.
\end{proposition}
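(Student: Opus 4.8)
The plan is to mimic the proof of Lemma \ref{mckaykoszul}, replacing the polynomial ring $\C[V]$ and its Koszul complex by the Sklyanin algebra $A$ together with its self-dual resolution $\cW^\bullet$ (shown just above to be a Calabi-Yau resolution of $A$), and replacing $\SL(V)$ by the automorphism group $\widetilde H$. The first step is to record how $\widetilde H$ interacts with the superpotential. The space of degree-four supercyclic elements of the ideal $I$ is one-dimensional, spanned by $\omega$ (this is the uniqueness used in producing \eqref{sklypot}). Since every $g \in \widetilde H$ is a graded algebra automorphism of $A$, it preserves $I$ and the supercyclic condition, so it sends $\omega$ to a scalar multiple $\chi(g)\,\omega$. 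This defines a character $\chi : \widetilde H \to \C^\times$, and by definition $\Aut(\omega) = H = \ker \chi$. On the central $\C^\times \subset \widetilde H$ the character $\chi$ is the fourth-power map, so $H \cap \C^\times = \mu_4$, recovering $|H| = 64$.

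For the first assertion I would check that $\cW^\bullet$ is $H$-equivariant as a complex of self-dual bimodules. Each $W_i = \Image \Delta^\omega_{|\omega|-i} \subset V^{\otimes i}$ is $\widetilde H$-stable, because $g\cdot[\psi\,\omega] = \chi(g)\,[(g\psi)\,\omega]$ again lies in $W_i$; the maps $\spl_L, \spl_R$ are manifestly $\widetilde H$-equivariant; and the self-duality pairing $\langle \xi, \eta \rangle = [(\xi \otimes \eta)\,\omega]$ of Lemma \ref{selfdual} is $H$-\emph{invariant} precisely because $H$ fixes $\omega$. Smashing $\cW^\bullet$ over $\C$ with $\C H$ is exact, so it yields a bimodule resolution of $A \# H$ over the base ring $\C H$, and the $H$-invariance of the pairing makes this resolution self-dual as $(A \# H) \otimes_{\C H} (A \# H)$-modules, exactly as in Lemma \ref{mckaykoszul}. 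Since smashing preserves the grading, the differentials still have degree $1$, so $A \# H$ is Koszul and Calabi-Yau. Applying the Morita equivalence of \S\ref{morita} with the idempotent $f$ then gives that $f(A \# H)f$ is Calabi-Yau.

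For the second assertion I would invoke the twisting mechanism of \S\ref{mckay}. For $G < \widetilde H$ the restriction $\chi|_G$ presents the one-dimensional $G$-representation $\C\omega$, so $\C\omega \otimes_\C \C G \cong (\C G)^\tau$ with $\tau(g) = \chi(g)\,g$, in direct analogy with the way $\wedge^n V^*$ produced the twist in the McKay case. Consequently $\omega \otimes 1$ is a \emph{twisted} superpotential for $A \# G$ with twisting $\tau$, and the same $G$-equivariant smashing of $\cW^\bullet$ now produces a \emph{twisted} self-dual bimodule resolution, by the twisted form of Lemma \ref{selfdual}. By the twisted version of Theorem \ref{kosthm}, $A \# G$ is twisted Calabi-Yau; since $\tau$ permutes the irreducibles of $G$ (by tensoring with the character $\chi|_G$), it permutes their primitive idempotents, so the full sum $f'$ satisfies $f'^\tau = f'$ automatically and the Morita equivalence of \S\ref{morita} applies. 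This yields that $f'(A \# G)f'$ is twisted Calabi-Yau, with the twist $\sigma$ on the quiver algebra induced by $\tau$. Taking $G = H$ recovers the first assertion, since then $\chi|_H = 1$ and $\tau$ is trivial.

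The main obstacle I anticipate is the bookkeeping of the twist: one must verify carefully that the single character $\chi$ by which $\widetilde H$ scales $\omega$ is exactly the datum controlling both the failure of $H$-invariance of the pairing and the twisting bimodule $(\C G)^\tau$, so that the twisted self-duality of the smashed complex matches the twisted Calabi-Yau condition with the stated automorphism $\sigma$. Everything else---exactness of smashing, preservation of the grading, and compatibility with Morita equivalence---is already packaged in Lemma \ref{mckaykoszul}, the Morita lemma of \S\ref{morita}, and the (twisted) Theorem \ref{kosthm}, so the remaining work is essentially a translation between the character $\chi$ and the twist $\sigma$ on the Morita equivalent path algebra.
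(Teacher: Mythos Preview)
Your proposal is correct and follows the same line as the paper. The paper does not give a separate proof environment for this proposition: its argument is the one sentence preceding the statement, namely that since $H=\Aut(\omega)$ fixes $\omega$, the element $\omega\otimes 1\in A\#H$ is still a superpotential (and for general $G<\widetilde H$ a twisted one, via the character by which $G$ scales $\omega$, playing exactly the role of $\det$ in \S\ref{mckay}); the Calabi-Yau and twisted Calabi-Yau conclusions then follow from the smashing argument of Lemma~\ref{mckaykoszul}, Theorem~\ref{kosthm}, and the Morita lemma of \S\ref{morita}. You have simply made this explicit, including the identification of the twist $\tau(g)=\chi(g)\,g$ and the verification that $f'^\tau=f'$, which is precisely the intended mechanism.
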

These algebras may be considered the elliptic McKay correspondence
algebras in dimension four, and $f(A \# H)f$ is the maximal Calabi-Yau
one, in the sense that $H$ is maximal (and so the McKay quiver is also
the largest possible).
\subsection{The case $\alpha = 0$}
The theorem \ref{sszthm} did not apply to the case that one of
$\alpha, \beta, \gamma$ is zero.  Since we only need \eqref{nondegwk}
to obtain a Calabi-Yau algebra and a potential, it is worth proving
the analogue of Theorem \ref{sszthm} in the degenerate cases
$(\alpha,\beta,\gamma) \in \{(0,\beta, -\beta), (\alpha, 0, -\alpha),
(\alpha, -\alpha, 0), (0,0,0)\}$ (and we will use this in the
next subsection). 
By symmetry, we restrict ourselves to the case $\alpha = 0$. 

It is likely that this result is known, but we did not find it in the
literature. We
remark that, in \cite[\S 1]{SS}, it is shown that these degenerate cases are
iterated Ore extensions.
\begin{theorem}\label{degauts}
(i) Assume $(\alpha, \beta, \gamma) = (0, \beta, -\beta)$ with
$\beta \neq 0$.
  Then, the graded automorphism group of $A$  is generated by $\C^\times$, the
  group $SO(2,\C)$ acting on $\Span\{x_2,x_3\}$, i.e.,
  $\Biggl\{ \begin{pmatrix} 1 & 0 & 0 & 0 \\ 0 & 1 & 0 & 0 \\ 0 & 0
    & a & b \\ 0 & 0 & -b & a \end{pmatrix} \Biggl| a^2+b^2=1 \Biggr\}$,
  and the elements $\Biggl\{\begin{pmatrix} 0 & \frac{1}{\pm \sqrt{\beta}} & 0 & 0 \\ \pm \sqrt{\beta} & 0 & 0 & 0 \\ 0 & 0
    & 1 & 0 \\ 0 & 0 & 0 & 1 \end{pmatrix}, \begin{pmatrix} i & 0 & 0 & 0 \\ 0 & -i & 0 & 0 \\ 0 & 0 & -i & 0 \\ 0 & 0 & 0 & i \end{pmatrix} \Biggr\}$, where
  $i$ denotes a square-root of $-1$.  

  (ii) If $\alpha = \beta = \gamma = 0$, then the automorphism group
  is $\C^{\times} \cdot SO(3,\C)$, with $SO(3,\C)$ the automorphism group of
  $\Span\{x_1, x_2, x_3\}$ together with its standard symmetric
  bilinear form $(x_i, x_j) = \delta_{ij}$.
\end{theorem}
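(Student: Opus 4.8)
The plan is to reduce the computation of the graded automorphism group to a finite-dimensional problem in linear algebra and then solve it. Since $A$ is quadratic (generated in degree one with defining relations in degree two), a graded automorphism is the same datum as a linear map $g \in GL(V)$, where $V = \Span\{x_0,x_1,x_2,x_3\}$, whose square $g\otimes g$ preserves the relation space $R = \Span\{r_1,r_2,r_3,s_1,s_2,s_3\} \subset V \otimes V$; that is, $\Aut(A) = \{g \in GL(V) : (g \otimes g)(R) = R\}$. Writing $V \otimes V = \Sym^2 V \oplus \wedge^2 V$ and reading off the antisymmetric parts of the six relations, one sees that in both degenerate cases these parts are, up to sign, the six basis vectors $x_i \wedge x_j$, so the projection $R \to \wedge^2 V$ is an isomorphism. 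Hence $R \cap \Sym^2 V = 0$ and $R$ is the graph of a linear map $\phi : \wedge^2 V \to \Sym^2 V$ sending each antisymmetric part to the corresponding symmetric part. Under this description the condition $(g \otimes g)(R) = R$ becomes the single equivariance relation $\Sym^2(g) \circ \phi = \phi \circ \wedge^2(g)$, and the whole theorem turns into the problem of determining the symmetries of $\phi$.

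First I would read off the rigid invariants of $\phi$, which any $g$ must respect. Computing $\phi$ from the relations, in case (i) one finds $\phi(x_0 \wedge x_1) = 0$ while the images of the other five basis vectors are independent (using $\beta \neq 0$), so $\ker \phi = \C\,(x_0 \wedge x_1)$; invariance of $\ker\phi$ under $\wedge^2(g)$ forces $g$ to preserve the plane $\Span\{x_0,x_1\}$. In case (ii) all three $r_i$ become commutators, so $\ker\phi = x_0 \wedge \Span\{x_1,x_2,x_3\}$, which forces $g(\C x_0) = \C x_0$. Dually, the image of $\phi$ is a $\Sym^2(g)$-invariant subspace: in case (i), analysing it together with the already-known invariance of the symmetric square of $\Span\{x_0,x_1\}$ pins down the preservation of $\Span\{x_2,x_3\}$ and of the quadratic form $x_2^2 + x_3^2$ on it, yielding the factor $SO(2,\C)$; in case (ii), the restriction of $\phi$ to the exterior square of $\Span\{x_1,x_2,x_3\}$ is exactly the cross product $x_i \wedge x_j \mapsto \pm x_0 x_k$ up to a uniform scalar, and invariance of its image forces $g(\Span\{x_1,x_2,x_3\}) \subseteq \Span\{x_1,x_2,x_3\}$ together with preservation of the standard form $\sum x_i^2$, yielding $SO(3,\C)$.

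It then remains to pin down the residual finite data. In case (i), having reduced to matrices preserving $\Span\{x_0,x_1\}$ and $\Span\{x_2,x_3\}$, I would substitute the general such $g$ back into $\Sym^2(g) \circ \phi = \phi \circ \wedge^2(g)$ and solve the resulting polynomial equations in the matrix entries. I expect these to cut the $\Span\{x_2,x_3\}$ block down to $SO(2,\C)$, to force the $\Span\{x_0,x_1\}$ block to be scalar or the anti-diagonal swap $\begin{pmatrix} 0 & \frac{1}{\pm\sqrt{\beta}} \\ \pm\sqrt{\beta} & 0 \end{pmatrix}$, to couple the two blocks through the element $\diag(i,-i,-i,i)$, and to leave only the global scaling $\C^\times$ free; assembling these generators gives the group in part (i). For part (ii) the same substitution, now with $g(x_0) = a x_0$ and $g$ preserving $\Span\{x_1,x_2,x_3\}$, forces the restriction there to lie in $\C^\times \cdot SO(3,\C)$ with $x_0$ fixed up to the global scalar, which is the asserted answer.

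The step I expect to be the main obstacle is the last one, namely proving completeness: that the equivariance equations admit no solutions beyond the generators listed. The continuous factor $SO(2,\C)$ (resp.\ $SO(3,\C)$) makes it easy to exhibit automorphisms but delicate to be sure none are missed and that no spurious one is admitted; in particular, in case (i) one must rule out any $g$ mixing $\Span\{x_0,x_1\}$ with $\Span\{x_2,x_3\}$, and one must correctly recover the isolated discrete elements such as $\diag(i,-i,-i,i)$. Once the full solution set of the system $\Sym^2(g) \circ \phi = \phi \circ \wedge^2(g)$ has been determined and checked against the generators, the theorem follows; the remaining degenerate parameter triples are then handled by the symmetry permuting $(\alpha,\beta,\gamma)$ invoked just before the statement.
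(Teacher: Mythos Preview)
Your proposal is correct and follows essentially the same route as the paper. Both arguments begin by splitting $V\otimes V=\Sym^2 V\oplus\wedge^2 V$ and observing that the relation space projects isomorphically onto $\wedge^2 V$; the paper phrases this as ``the relations project isomorphically to $\Lambda^2 V$'' and then reads off invariants relation by relation ($r_1$ forces $\Span\{x_0,x_1\}$ invariant, then $s_1$ forces $\Span\{x_2,x_3\}$ invariant, then $r_2,r_3$ pin down the $SO(2)$ block and the fourth-root condition), while you package the same data as a single equivariance $\Sym^2(g)\circ\phi=\phi\circ\wedge^2(g)$ and extract the invariants from $\ker\phi$ and $\operatorname{Im}\phi$. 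In part (ii) your cross-product description of $\phi|_{\wedge^2 U}$ is exactly the paper's ``canonical isomorphism $\Lambda^2 U\iso U$'' which forces $g|_U\in\C^\times SO(U)$; the final reduction to $SO(U)$ (eliminating the extra scalar) is the one step the paper makes explicit that your outline leaves implicit, and is done there by checking that a diagonal element must fix $s_1$. The endgame computation you flag as the main obstacle is indeed where the paper spends its effort, and your setup leads to the same system of equations it solves.
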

\begin{proof}
  (i) The vector $r_1 = x_0 \wedge x_1$ must be preserved up to scalar
  by any automorphism, so the span of $x_0, x_1$ is preserved; then
  the only element of $\Sym^2 \Span \{x_0, x_1\}$ in the
  symmetrization of the relations is $x_0 x_1 + x_1 x_0$. Hence, any
  automorphism must send $(x_0, x_1)$ to $(\lambda x_0, \mu x_1)$ or
  $\mu x_1, \lambda x_0$.  Up to the automorphism
  $x_0 \mapsto \sqrt{\beta} x_1, x_1 \mapsto \frac{1}{\sqrt{\beta}}
  x_0, x_2 \mapsto x_1, x_3 \mapsto x_3$
  and scaling, we may assume that our automorphism $\psi$ satisfies
  $\psi(x_0) = \lambda x_0, \psi_(x_1) = \lambda^{-1} x_1$.  Since
  then $x_0 x_1 + x_1 x_0$ is preserved, looking at $s_1$, we see that
  $x_2 x_3 - x_3 x_2$ is preserved, and hence the span of $x_2, x_3$
  is preserved.

  Next, note that the relations project isomorphically to
  $\Lambda^2 V$.  Let $[x,y] := xy-yx$ denote the commutator and
  $\{x,y\} := xy+yx$ the anticommutator.  We have
  $\psi(r_2) = [\lambda x_0, \psi(x_2)] - \frac{\beta}{\lambda}
  \{\psi(x_3), x_1\}$.
  If we write $\lambda \psi(x_2) = a x_2 + b x_3$, then we must have
  $\psi(r_2) = a r_2 + b r_3$.  This implies that, restricted to
  $\Span\{x_2, x_3\}$, $\psi$ must have the form
\begin{equation}
\psi = \begin{pmatrix} \frac{a}{\lambda} & -b \lambda \\ \frac{b}{\lambda} & a \lambda \end{pmatrix}.
\end{equation}
Applying the same reasoning to $\psi(x_3)$, we deduce furthermore that $\lambda^4 = 1$. This yields the claimed description.

(ii) Let $R$ be the vector space spanned by the relations.
In the case $\alpha = \beta = \gamma = 0$, the intersection $\Lambda^2 V \cap R$ is $\Span\{x_0\} \wedge V$, and hence any automorphism $\psi$ must send $x_0$ to a multiple of itself.  Up to scaling, let us assume
that $\psi(x_0) = x_0$.  Then, the fact that the relations project isomorphically to $\Lambda^2 V$ yields a canonical isomorphism $\Lambda^2 V / (\Lambda^2 V \cap R) \iso \Span \{x_1, x_2, x_3\}$, sending $w \in \Lambda^2 V$ to the unique
element $v$ such that $w - (x_0 v + v x_0) \in R$.  This must be preserved
by any automorphism. Hence, $U := \Span\{x_1, x_2, x_3\}$ is preserved, and the
map may be written as an isomorphism $\Lambda^2 U \iso U$. Preserving this
map in particular means that two vectors which are perpendicular under
the standard form $(x_i, x_j) = \delta_{ij}$ remain perpendicular, so that 
$\psi$, restricted to $U$, must lie in $\C^\times SO(U)$.  However,
any diagonal matrix must preserve $s_1$ and hence must be the identity, so
that $\psi \in SO(U)$.  Hence the automorphism group of $A$ (now acting on
all of $V$) lies in $\C^\times SO(U)$.  On the other hand, it is clear that
this group acts by automorphisms on $A$.
\end{proof}
\begin{corollary}
  The automorphism group of the potential $\Aut(\omega)$ is generated
  by the elements listed in the theorem, except that $\C^\times$ is
  replaced by the group $\mu_4 \subset \C^\times$ of fourth roots of
  unity.
\end{corollary}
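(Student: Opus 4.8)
The plan is to realize $\Aut(\omega)$ as the kernel of a determinant character on the automorphism group of Theorem \ref{degauts}. First I would record the inclusion $\Aut(\omega) \subseteq \Aut(A)$: any $\psi \in \Aut(V)$ fixing $\omega \in V^{\otimes 4}$ preserves the relation space $W_2 = \Image \Delta_2^\omega$, hence descends to a graded automorphism of $A = \cD(\omega,2)$. Conversely, $\omega$ is the unique superpotential up to scaling in the cases at hand (where \eqref{nondegwk} holds, e.g.\ $\beta \neq -1$ for $(0,\beta,-\beta)$): by the Proposition (that $A \cong \cD(\omega,2)$ is Koszul Calabi--Yau) together with \S\ref{koszul}, $\omega$ freely generates the top Hochschild homology $HH_4(A,A)$, so every graded automorphism scales $\omega$. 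This gives a character $c \colon \Aut(A) \to \C^\times$ defined by $\psi \cdot \omega = c(\psi)\,\omega$, with $\Aut(\omega) = \ker c$, and the problem reduces to computing $c$ on the generators of Theorem \ref{degauts}.

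The key point is that $c = \det$. Conceptually, $\omega$ is a flat deformation of the volume element $\sum_{\sigma \in \Symm_4} (-1)^\sigma x_{\sigma(0)} \otimes \cdots \otimes x_{\sigma(3)}$, the potential of $\C[V]$ recovered as $\alpha,\beta,\gamma \to 0$, on which $\psi^{\otimes 4}$ acts by $\det(\psi|_V)$; thus $\Aut(\omega) = \Aut(A) \cap \SL(V)$. Rather than justify this abstractly, I would check $c$ directly on each type of generator using \eqref{sklypot}. A scalar $\lambda \in \C^\times$ acts on the degree-four element $\omega$ by $\lambda^4$, so $\ker c \cap \C^\times = \mu_4$, which is precisely the asserted replacement of $\C^\times$ by $\mu_4$. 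The factor $SO(2,\C)$ fixes $r_1,s_1$ and rotates each of the pairs $(r_2,r_3)$ and $(s_2,s_3)$ by the same orthogonal matrix, hence preserves the rotation-invariant combination $r_2 s_2 + s_2 r_2 + r_3 s_3 + s_3 r_3$ appearing in $\omega$ (using $\kappa_2 = \kappa_3$, which follows from the defining relations of the $\kappa_i$ when $\alpha = 0,\ \gamma = -\beta$); so $SO(2,\C) \subset \Aut(\omega)$. Likewise $\diag(i,-i,-i,i)$ fixes every relation, hence fixes $\omega$.

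The only nontrivial case is the swap $x_0 \mapsto \sqrt{\beta}\, x_1,\ x_1 \mapsto \frac{1}{\sqrt{\beta}} x_0$ (fixing $x_2,x_3$): a short computation from \eqref{sklypot} gives $g \cdot \omega = -\omega$, in agreement with $\det g = -1$, so $g$ scales $\omega$ by $-1$ and falls outside $\ker c$. I would conclude that $\Aut(\omega)$ is exactly the determinant-one subgroup $\Aut(A) \cap \SL(V)$, i.e.\ the group described by the generators of Theorem \ref{degauts} once $\C^\times$ is cut down to $\mu_4 = \C^\times \cap \SL(V)$. In case (ii) the continuous part is even cleaner: $SO(3,\C) \cong PSL_2(\C)$ is connected and simple, so it has no nontrivial characters and must act trivially on the line $\C\omega$; the finite generators are handled as above.

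The main obstacle is making the identification $c = \det$ rigorous. The ``deformation of the volume form'' heuristic gives the correct answer, but a proof requires either the explicit computation of $\psi \cdot \omega$ from \eqref{sklypot} for each generator (finite but error-prone, the nontrivial outputs being $c(\lambda) = \lambda^4$ and $c(g) = -1$) or an appeal to the transformation law of the fundamental class of $HH_4(A,A)$ under graded automorphisms of a Koszul Calabi--Yau algebra. A secondary, routine-but-delicate point is exhibiting an explicit generating set for the index-two subgroup $\ker c$: since the swap $g$ lies outside $\Aut(\omega)$, some of the needed generators arise only as $g$-conjugates of the determinant-one generators --- for instance $[g, \diag(i,-i,-i,i)] = \diag(-1,-1,1,1)$ --- a standard Reidemeister--Schreier bookkeeping.
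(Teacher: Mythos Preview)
The paper gives no proof of this Corollary; it is stated as immediate from Theorem~\ref{degauts} together with the earlier remark that $\lambda\in\C^\times$ acts on $V^{\otimes 4}$ by $\lambda^4$. Your approach via the scaling character $c\colon\Aut(A)\to\C^\times$, $\psi\cdot\omega=c(\psi)\omega$, is the right way to make this precise, and your identification $c=\det$ (verified generator by generator) is correct. In particular your argument is strictly more careful than what the paper supplies.

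In fact your computation $c(g)=\det(g)=-1$ for the swap $g\colon x_0\mapsto\sqrt\beta\,x_1,\ x_1\mapsto\frac1{\sqrt\beta}x_0$ shows that the Corollary as literally phrased is inaccurate in case~(i): $g\notin\Aut(\omega)$, so $\Aut(\omega)$ is \emph{not} generated by the listed elements with $\C^\times$ replaced by $\mu_4$. The correct statement is $\Aut(\omega)=\Aut(A)\cap\SL(V)$, and one checks that $g$ normalises the determinant-one subgroup $\langle\mu_4,\,SO(2),\,h\rangle$ (indeed $ghg^{-1}=\diag(-i,i,-i,i)=-h\cdot\diag(1,1,-1,-1)$ with $\diag(1,1,-1,-1)\in SO(2)$), so that $\Aut(\omega)=\langle\mu_4,\,SO(2),\,h\rangle$ in case~(i). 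In case~(ii) there is no discrepancy: $SO(3,\C)\subset\SL(V)$ and your simplicity argument goes through.

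So your proposal is correct and actually corrects a small imprecision in the paper; the only place to tighten is the final ``i.e.'' clause, which as you yourself flag does not literally follow, and the resolution is the normaliser computation above rather than a Reidemeister--Schreier argument.
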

As a consequence, we may again consider $A \# G$ for any finite
subgroup $G \subset \Aut(\omega)$, which will be a Calabi-Yau algebra,
and in the case $G \subset \Aut(A)$ but not $\Aut(\omega)$, we get a
twisted Calabi-Yau algebra.  As before, one may consider the Morita
equivalent algebras and write down their potentials.

\subsection{Moduli space of four-dimensional Sklyanin algebras} \label{mss}
In this subsection we will use the theory of the Weil representation
over $\Z/4$ and the preceding results to give a simple computation of
the moduli space of Sklyanin algebras in dimension four.  Throughout,
when we say ``isomorphism'' or ``automorphism'' of Sklyanin algebras,
we mean a graded isomorphism or automorphism.

First, we note that, given any $(\alpha, \beta, \gamma)$, the algebras
associated to this triple and any cyclic permutation are isomorphic:
the permutation
$x_0 \mapsto x_0, x_1 \mapsto x_2 \mapsto x_3 \mapsto x_1$ sends the
relations for $(\alpha, \beta, \gamma)$ to the relations for
$(\gamma, \alpha, \beta)$.  Similarly, the map
$x_0 \mapsto x_0, x_1 \mapsto x_2, x_2 \mapsto -x_1, x_3 \mapsto x_3$
sends the relations for $(\alpha, \beta, \gamma)$ to the relations for
$(-\beta, -\alpha, -\gamma)$.

Hence, if we consider the $\Symm_3$ action on the surface $\cS$ given by
$\alpha + \beta +\gamma + \alpha \beta \gamma = 0$, given by multiplying
the standard permutation action by the sign representation, we get a map
\begin{equation} \label{cs3eqn}
\cS / \Symm_3 \twoheadrightarrow \{\text{Isomorphism classes of four-dimensional Sklyanin algebras}\}.
\end{equation}
\begin{theorem}\label{sklymod}
The map \eqref{cs3eqn} is a bijection.
\end{theorem}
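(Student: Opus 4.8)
The plan is to dispatch well-definedness and surjectivity essentially by construction, and then to reduce injectivity to an explicit computation with the Weil representation. The map \eqref{cs3eqn} is well-defined on the quotient because the two explicit isomorphisms exhibited just before the theorem show that the $\Symm_3$-action (the standard permutation action twisted by the sign character) carries the relations for $(\alpha,\beta,\gamma)$ to the relations for a permuted-and-signed triple, and hence fixes the isomorphism class. Surjectivity is immediate, since by construction every four-dimensional Sklyanin algebra is the algebra $A$ attached to some point of $\cS$. The content of the theorem is therefore injectivity: if $A_{(\alpha,\beta,\gamma)} \cong A_{(\alpha',\beta',\gamma')}$ as graded algebras, I must show the two triples lie in a common $\Symm_3$-orbit.

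To set up injectivity, any graded isomorphism $\phi$ is determined by its restriction to the degree-one part, a linear isomorphism of the underlying spaces $V \iso V'$. Because $A$ is Koszul and Calabi-Yau (by the results of \S\ref{koszul}), the superpotential $\omega$ of \eqref{sklypot} spans the top Hochschild homology $HH_4(A,A)$ as a free bimodule (cf.~\cite[Proposition 10]{dbv}), so it is canonically attached to $A$ up to scalar; hence the induced map $V^{\otimes 4} \to (V')^{\otimes 4}$ sends $\omega$ to a nonzero multiple of $\omega'$. Consequently $\phi$ conjugates $\Aut(\omega)$ onto $\Aut(\omega')$. For generic parameters both groups are the $\Z/4$-Heisenberg group $H$ of Theorem \ref{sszthm}; since the Heisenberg representation is determined by its central character, I may conjugate so that $H \subset \GL(V)$ is a single fixed copy acting in the standard way, and then $\phi$ lies in the normalizer $N := N_{\GL(V)}(H)$. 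Conversely, any element of $N$ carrying $\omega$ to a multiple of $\omega'$ induces an algebra isomorphism.

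Thus isomorphism classes of generic Sklyanin algebras correspond exactly to $N$-orbits on the projective space of $H$-invariant degree-four superpotentials, which by \eqref{sklypot} is the $\PP^2$ with homogeneous coordinates $(\kappa_1 : \kappa_2 : \kappa_3)$. The action of $N$ on this space factors through $\bar N := N/(H \cdot \C^\times)$, a form of the $\SL_2(\Z/4)$ of the introduction, and is a version of the Weil representation. I would compute this action explicitly on $(\kappa_1 : \kappa_2 : \kappa_3)$ and then transport it to the triple $(\alpha,\beta,\gamma)$ via the linear relations defining the $\kappa_i$; one checks that the coefficient determinant of that linear system is $2\kappa_1\kappa_2\kappa_3$, so that $(\alpha,\beta,\gamma)$ is recovered uniquely from $(\kappa_1 : \kappa_2 : \kappa_3)$ on the nonvanishing locus and the map $\cS \to \PP^2$ is generically injective. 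The goal is to verify that the resulting transformations are precisely the sign-twisted permutations generating the $\Symm_3$-action on $\cS$, with the extra elements of $\bar N$ — those in the kernel of the reduction $\SL_2(\Z/4) \to \SL_2(\Z/2) \cong \Symm_3$ — acting on $\PP^2$ only through scalars, hence trivially projectively. This identifies $N$-orbits with $\Symm_3$-orbits and yields injectivity for generic parameters.

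The main obstacle is exactly this explicit Weil-representation computation, together with the bookkeeping of the special and degenerate loci. At $\alpha=\beta=\gamma=\pm\sqrt{-3}$ the automorphism group of $\omega$ enlarges to $H \rtimes \Z/3$, the extra $\Z/3$ appearing as a fixed triple of the $\Symm_3$-action, so one must check the fiber of \eqref{cs3eqn} there remains a single orbit. At the loci where some parameter vanishes, Theorem \ref{sszthm} does not apply and the automorphism group becomes positive-dimensional; these cases I would treat directly, reading off the isomorphisms from the explicit automorphism groups of Theorem \ref{degauts} and confirming by hand that the fibers of \eqref{cs3eqn} are again single $\Symm_3$-orbits. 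Assembling the generic computation with these boundary checks then gives the bijection.
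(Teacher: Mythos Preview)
Your overall architecture matches the paper's: dispose of degenerate loci by hand, then in the generic case use Stone--von~Neumann to normalize the Heisenberg action, identify $\Out(H,\mu_4)\cong\SL_2(\Z/4)$ acting via a Weil representation on the $H$-invariant superpotentials, and show the kernel $K$ of $\SL_2(\Z/4)\twoheadrightarrow\SL_2(\Z/2)\cong\Symm_3$ acts trivially on the image of $\cS_0$.

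There is, however, a genuine gap in your parametrization step. You assert that ``the projective space of $H$-invariant degree-four superpotentials \ldots\ is the $\PP^2$ with homogeneous coordinates $(\kappa_1:\kappa_2:\kappa_3)$,'' citing \eqref{sklypot}. But the vectors $r_is_i+s_ir_i$ in \eqref{sklypot} depend on $(\alpha,\beta,\gamma)$ through the relations $r_i,s_i$ themselves; they are not a fixed basis of a fixed $H$-invariant subspace of $V^{\otimes 4}$. After you conjugate different algebras to a common $H$ via intertwiners $\psi$, the images $\psi^{-1}(r_is_i+s_ir_i)$ still depend on the original parameters and on the (noncanonical) choice of $\psi$, so $(\kappa_1:\kappa_2:\kappa_3)$ are not coordinates on a single space. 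Your proposed passage ``transport to $(\alpha,\beta,\gamma)$ via the linear relations defining the $\kappa_i$'' is therefore circular: the linear system you want to invert is written in a moving frame. The paper resolves this by a different device: it parametrizes $\cS_0$ via theta-constants \eqref{thetdfn}, which makes the intertwiner $\psi$ \emph{diagonal}, and then recovers $\alpha,\beta,\gamma$ directly from ratios of coefficients of $\psi^{-1}(\omega)$ in the fixed monomial basis of $V^{\otimes 4}$ (e.g.\ $\alpha$ is the ratio of the $\{x_0,x_1\}\{x_2,x_3\}$ coefficient to the $[x_0,x_1][x_2,x_3]$ coefficient). This simultaneously shows that the ambiguity in $\psi$ is exactly the action of $K$ and that $\cS_0\hookrightarrow\PP U/K$ is injective, without ever needing to identify $\dim U$ or find a canonical basis.

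A second, smaller omission: your list of degenerate cases covers only ``some parameter vanishes,'' but the theorem is stated over all of $\cS$, so you must also handle the loci where \eqref{nondegwk} fails, i.e.\ $(\alpha,-1,1)$ and its $\Symm_3$-translates, where there is no superpotential and the argument via $\omega$ is unavailable. The paper treats these by a direct analysis of the rank-two tensors in the relation space.
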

The rest of the subsection will be devoted to the proof of the
theorem.  The main case of the theorem concerns those parameters
satisfying the conditions of Theorem \ref{sszthm}, and we will prove
the result by finding a description of the moduli space of potentials
in terms of the Heisenberg and Weil representations.
\begin{remark}
  Note that, in the locus of elements satisfying Theorem \ref{sszthm},
  the $\Symm_3$ action is free except at the two points
  $\alpha=\beta=\gamma = \pm \sqrt{-3}$.  Here, these two points form
  a two-element orbit, and the isotropy $\Z/3$ is picked up by the
  automorphism group at these points (cf.~Theorem \ref{sszthm}).
\end{remark}

First, let us handle the degenerate cases when one of
$\alpha, \beta, \gamma$ is zero.  Suppose only one is zero, and
without loss of generality, say it is $\alpha$.  Then
$(\alpha, \beta, \gamma) = (0, \beta, -\beta)$. Note that, in this
case, the automorphism group of $A$ is independent of the value of
$\beta$.  In particular, any $\psi: V \iso V$ inducing an isomorphism
$A \iso A'$ with $A'$ of the same form must normalize the connected
component of the identity of the common automorphism group, i.e.,
$\C^\times \cdot SO(2)$.  Since $\psi$ must therefore preserve the
trivial weight spaces of $SO(2)$ and either preserve or interchange
the nontrivial weight spaces, $\psi$ must have the form
$\psi = \psi' \oplus \psi''$, where
$\psi' = \psi|_{\Span\{x_0, x_1\}}$ and
$\psi'' = \psi|_{\Span\{x_2, x_3\}}$, and
$\psi'' \in O(\Span\{x_2, x_3\}$ (the orthogonal group).  Up to an
automorphism of $A$, we may assume that
$\psi'' = \begin{pmatrix} 1 & 0 \\ 0 & \varepsilon \end{pmatrix} \}$,
with $\varepsilon \in \{1,-1\}$.  By the same argument as in the proof
of Theorem \ref{degauts}, we must have that $\psi'$ is either diagonal
or strictly off-diagonal, and using the automorphism
$\begin{pmatrix} 0 & \pm \frac{1}{\sqrt{\beta}} \\ \pm \sqrt{\beta} &
  0 \end{pmatrix}$,
we may assume $\psi'$ is diagonal, say
$\psi' = \begin{pmatrix} \lambda & 0 \\ 0 & \mu \end{pmatrix}$. Using
that the relations for $A$ and $A'$ both contain $s_1, s_2, s_3$, it
follows that $\mu \lambda = \varepsilon, \mu = \lambda$, and
$\mu = \varepsilon \lambda$.  Put together, this says that
$\varepsilon = 1$ and $\mu = \lambda = \pm 1$.  This is already an
automorphism of $A$, so $\psi \in \Aut(A)$.  That is, $A$ and $A'$
already had the same relations. So \eqref{cs3eqn} is injective when
restricted to parameters $(\alpha,\beta,\gamma)$ with exactly one
parameter equal to zero.

In the case $\alpha = \beta = \gamma = 0$, it is clear that no other
triple $(\alpha, \beta, \gamma)$ yields an isomorphic algebra.  

We didn't restrict ourselves to the Calabi-Yau condition
\eqref{nondegwk}, so let us also explain the contrary cases.  First
assume $(\alpha, \beta, \gamma) = (\alpha, -1, 1)$ with
$\alpha \neq \pm 1$.  Call $R$ the span of the relations.  We quickly
compute the automorphism group of $A$ as follows.  We see that
$R$ contains the rank-two tensors
\begin{equation}
x_0 x_2 + x_1 x_3, \quad x_2 x_0 - x_3 x_1, \quad x_0 x_3 - x_1 x_2, \quad x_3 x_0 + x_2 x_1.
\end{equation}
Set $U := \Span\{x_0, x_1\}$ and $U' := \Span\{x_2, x_3\}$. Then, the
rank-two tensors form a union of an open subvariety of
$R \cap (U \otimes U')$ and an open subvariety of
$R \cap (U' \otimes U)$.  Thus, any automorphism of $A$ must either
preserve or interchange $U$ and $U'$.  Moreover, equip $U$ and $U'$
each with their standard symmetric bilinear forms.  We see that, given
nonzero vectors $w_1, w_2 \in U$, the subspace of relations
$\{w_1 w_1' - w_2 w_2' \mid w_1', w_2' \in U'\} \cap R$ is
two-dimensional iff $(w_1, w_2) = 0$.  Hence, any automorphism of $A$
which preserves $U, U'$ must also preserve their standard symmetric
bilinear forms.  Thus, $\Aut(A)$ must be a subgroup of
$(\C^\times SO(U)) \oplus (\C^\times SO(U')) \rtimes \Z/2$, where
$1 \in \Z/2$ interchanges $U$ and $U'$, e.g., it may be the element
$x_0 \mapsto x_2 \mapsto x_0, x_1 \mapsto \sqrt{-1} x_3 \mapsto x_1$.
We claim that the automorphism group is
$\C^\times (SO(U) \oplus SO(U')) \rtimes \Z/2$. To prove this it
suffices to show that any automorphism of $A$ in
$\C^\times \oplus \C^\times$ is diagonal, i.e., if $\psi \in \Aut(A)$
has the property that $\psi|_U$ and $\psi|_U'$ are scalar, then the
two scalars are equal.  Such an element must preserve the relation
$s_1$, which implies the needed result.

This yields the statement of the theorem for the case
$(\alpha, \beta, \gamma) = (\alpha, -1, 1)$ with $\alpha \neq \pm 1$:
although we have only computed the automorphism group of $A$, any
intertwiner $\psi: V \iso V$ which sends $A$ to some other $A'$ with
parameters $(\alpha', -1, 1)$ must also be of the above form, since
nothing depended on $\alpha$ (except that $\alpha \neq \pm 1$ so that
our statements about rank-two tensors are accurate).  

The case where $\alpha, \beta, \gamma \in \{\pm 1\}$ is trivial since
all of these cases are under the same orbit of $\Symm_3$ (and they
cannot be equivalent to any other example because their relations have
the largest subvarieties of rank-two tensors, or alternatively,
because we show in all other examples that this case is not
equivalent).

Thus, we have reduced the theorem to the nondegenerate case when
$\alpha, \beta,$ and $\gamma$ are all nonzero and \eqref{nondegwk} is
satisfied.  We will not make further mention of this assumption.  

Recall the Heisenberg group $H \cong \Aut(\omega)$ from the previous
section.  We will need the Stone-von Neumann theorem in our context
(we omit the proof, which is easy):
\begin{lemma} \label{svn} (\textbf{Stone-von Neumann theorem.})
  There is a unique irreducible representation of $H$ which sends
  elements $\zeta \in \mu_4$ to the corresponding scalar matrix
  $\zeta \cdot \Id$.  
\end{lemma}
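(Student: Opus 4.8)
The plan is to prove existence and uniqueness together by a direct analysis of the images of the two generators, using crucially that the prescribed central character is faithful on $\mu_4$.

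First I would set up notation. Let $(\rho, W)$ be any nonzero representation with $\rho(\zeta) = \zeta\,\Id$ for all $\zeta \in \mu_4$, and put $P := \rho(X)$ and $Q := \rho(Y)$. Identifying the central generator $Z = [X,Y]$ with $\sqrt{-1} \in \mu_4$, the presentation of $H$ translates into the operator identities $PQP^{-1} = \sqrt{-1}\,Q$ and $P^4 = Q^4 = \rho(Z^2) = -\Id$. In particular $P$ and $Q$ are invertible of order eight, and every eigenvalue of $Q$ is a fourth root of $-1$.

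Next comes the eigenvalue bookkeeping, which yields uniqueness. The relation $PQP^{-1} = \sqrt{-1}\,Q$ shows that conjugation by $P$ multiplies each $Q$-eigenvalue by a primitive fourth root of unity; hence $P$ carries the $\lambda$-eigenspace of $Q$ isomorphically onto the eigenspace for the product of $\lambda$ with that root. Since the four fourth roots of $-1$ form a single orbit under multiplication by a primitive fourth root of unity, $P$ cyclically permutes the corresponding four eigenspaces, which therefore all share one common dimension $m$, so $\dim W = 4m$. For any nonzero $v$ in a fixed eigenspace, the span of $v, Pv, P^2v, P^3v$ is a four-dimensional $H$-stable subspace: it is $P$-stable because $P^4 = -\Id$, and $Q$-stable because each $P^jv$ is a $Q$-eigenvector. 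It is irreducible, since any stable subspace is a sum of some of these four eigenlines (being $Q$-stable) while $P$ permutes them transitively. In the ordered basis $v, Pv, P^2v, P^3v$ the matrices of $P$ and $Q$ are completely forced (a cyclic shift carrying the single sign dictated by $P^4 = -\Id$, and the diagonal matrix of the four fourth roots of $-1$), so every such four-dimensional subrepresentation is isomorphic to one fixed module $W_{\mathrm{std}}$, and $W \cong W_{\mathrm{std}}^{\oplus m}$. This proves that $W_{\mathrm{std}}$ is the unique irreducible with the prescribed central character.

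For existence I would exhibit $W_{\mathrm{std}}$ explicitly on $\C^4$ with basis $v_0, v_1, v_2, v_3$, setting $Pv_j = v_{j+1}$ for $j < 3$, $Pv_3 = -v_0$, and $Qv_j = \lambda(-\sqrt{-1})^j v_j$ with $\lambda^4 = -1$; a routine check gives $P^4 = Q^4 = -\Id$ and $PQ = \sqrt{-1}\,QP$, so that $X \mapsto P$, $Y \mapsto Q$, $Z \mapsto \sqrt{-1}\,\Id$ respects all relations of $H$ and realizes the required central character. The only genuinely delicate point in the whole argument is the claim that the four fourth roots of $-1$ really do form a single $P$-orbit of equal multiplicities; this is precisely where faithfulness of the central character on $\mu_4$ is used, since if $\rho(Z)$ were a proper root of unity or trivial the orbit would be shorter and the representation would fail to be irreducible of dimension four. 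Everything else is elementary linear algebra.
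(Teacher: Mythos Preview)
Your argument is correct. The paper itself omits the proof entirely, stating only that it is easy, so there is nothing to compare against; your direct analysis via the eigenspace decomposition of $Q=\rho(Y)$ and the cyclic permutation induced by $P=\rho(X)$ is exactly the standard Stone--von~Neumann argument specialized to this finite Heisenberg group.

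One small point of exposition: when you say that in the ordered basis $v,Pv,P^2v,P^3v$ the matrices of $P$ and $Q$ are ``completely forced'', the diagonal matrix for $Q$ still depends on which of the four fourth roots of $-1$ you chose as the eigenvalue $\lambda$ of the initial vector $v$. To close the uniqueness argument cleanly you should observe that all four eigenvalues occur in \emph{every} such irreducible (by the transitivity of the $P$-action on eigenspaces that you already established), so given two irreducibles $W,W'$ with the prescribed central character you may choose starting vectors with the \emph{same} $Q$-eigenvalue in each and then your basis comparison goes through verbatim. This is implicit in what you wrote but worth making explicit, since otherwise a reader might worry that different choices of $\lambda$ yield non-isomorphic four-dimensional modules.
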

Call this the \textbf{Heisenberg representation}.
Note that our given representation $V$ of $H$ is of this form.
\begin{notation} Let $\Aut(H,\mu_4)$ denote the subgroup of the
  automorphism group of $H$ which acts trivially on the center
  $\mu_4 < H$.  Similarly, let $\Inn(H, \mu_4) = \Inn(H)$ be the inner
  automorphisms, and $\Out(H, \mu_4)$ be $\Aut(H,\mu_4)$ modulo inner
  automorphisms.
\end{notation}
We know that a Sklyanin algebra is specified by a potential
$\omega \in V^{\otimes 4}$, up to a scalar multiple.  Now, let us fix
one such algebra $A_0$ with potential $\omega_0$. Then, $V$ naturally
has the structure of the unique irreducible Heisenberg representation
of Lemma \ref{svn}, given by any fixed isomorphism
$H \cong \Aut(\omega_0) \subset \Aut(V)$. Let
$\rho_0: H \rightarrow \Aut(V)$ be such a representation. 

So, we have fixed the data $(A_0, \omega_0, \rho_0)$. Now, given any
other algebra $A$ with potential $\omega \in V^{\otimes 4}$, it is
equipped with a Heisenberg representation
$\rho: H \rightarrow \Aut(V)$ which is unique up to precomposition
with an element of $\Aut(H,\mu_4)$. By Lemma \ref{svn} and Schur's
Lemma, there must be a unique up to scalar intertwiner
$\psi: V \iso V$ such that $\psi \rho_0(h) \psi^{-1} = \rho(g)$ for
all $h \in H$.  Hence, we obtain the vector
$\psi^{-1}(\omega) \in V^{\otimes 4}$.  This vector is uniquely
determined by $(A, \omega, \rho)$ up to scaling.

If we had picked a different potential $\omega$, this could also only
affect the vector $\psi^{-1}(\omega)$ by scaling.

If, instead of $\rho$, we had chosen $\rho' = \rho \circ \phi$ for
some element $\phi \in \Aut(H,\mu_4)$, then instead of
$\psi^{-1}(\omega) \in V^{\otimes 4}$, we would have obtained
$\psi_\phi^{-1} \psi^{-1}(\omega)$, where $\psi_\phi^{-1}: V \iso V$
is any intertwiner (unique up to scaling) between $\rho_0$ and
$\rho_0 \circ \phi$, i.e., such that
$\psi_\phi \rho_0(h) \psi_\phi^{-1} = \rho_0(\phi(h))$.

Note that, by Lemma \ref{svn}, we have a projective representation
$\Aut(H,\mu_4) \rightarrow PGL(V^{\otimes 4})$.
Thus, we have obtained a map from Sklyanin algebras to 
$\PP V^{\otimes 4} / \Aut(H, \mu_4)$.  In fact, we can do better: since
$\omega$ is fixed by the action of $\rho(H)$, $\psi^{-1}(\omega_0)$ is fixed
by the action of $\rho_0(H)$, and this is the same as the action of $\Inn(H, \mu_4)$ on $\PP V^{\otimes 4}$.  Hence, letting $U \subset V^{\otimes 4}$
be the subspace of fixed vectors under $\rho_0(H)$, we have a projective
representation of $\Out(H, \mu_4)$ on $U$, and have a map
\begin{equation} \label{ellsklymap}
\text{Four-dimensional Sklyanin algebras} \rightarrow \PP U / \Out(H, \mu_4).
\end{equation}
Furthermore, suppose we have $(A, \omega, \rho)$ as above, and another
 Sklyanin algebra $A' \cong A$, together with an isomorphism
$\theta: V \iso V$ carrying the relations of $A$ to the relations of
$A'$.  We may pick $\omega' = \theta(\omega)$ as our potential for
$A'$, and $\rho' := \theta \rho \theta^{-1}$ as our Heisenberg
representation $H \rightarrow \Aut(\omega')$.  Thus, using the
intertwiner $\psi' = \theta \circ \psi$, we see that the image of $A$
and $A'$ under \eqref{ellsklymap} is the same. Conversely, if we are
given $(A, \omega, \rho), (A', \omega', \rho'), \psi, \psi'$ such that
$\psi^{-1}(\omega) = (\psi')^{-1}(\omega')$, then
$\psi' \circ \psi^{-1}: V \iso V$ is an isomorphism carrying $\omega$
to $\omega'$, and hence induces an isomorphism between (the relations
of) $A$ and $A'$.

We thus obtain a canonical map (having fixed just $A_0$ and $\rho_0$):
\begin{equation}
\text{Isomorphism classes of four-dimensional Sklanin algebras} \hookrightarrow
\PP U / \Out(H, \mu_4).
\end{equation}

Next, we will describe the image of this map. Also, the reader will
probably recognize that $\Out(H, \mu_4) \cong \SL_2(\Z/4)$ and its
action on $\PP U$ is a version of the Weil representation, which we will
explain.

Let us define $U := (\Z/4)^{\oplus 2}$ and think of this as a free
rank-two $\Z/4$-module.
\begin{lemma}
The outer automorphism group $\Out(H,\mu_4)$ of $H$ fixing its center is 
$\SL_2(\Z/4)$.  We have the exact sequence
\begin{equation}
1 \rightarrow U \rightarrow \Aut(H, \mu_4) \rightarrow \SL_2(\Z/4) \rightarrow 1.
\end{equation}
\end{lemma}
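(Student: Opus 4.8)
The plan is to exhibit $\Aut(H,\mu_4)$ as an extension of $\SL_2(\Z/4)$ by $\Inn(H)$ and then to identify $\Inn(H)$ with $U$. The first step is to compute the center of $H$. From $[X,Y]=Z$ one derives the commutation rule $Y^b X^a = Z^{-ab} X^a Y^b$, and using it together with $X^4=Y^4=Z^2$ I would check that $X^a Y^b Z^c$ is central exactly when $a\equiv b\equiv 0 \pmod 4$; since $X^4=Y^4=Z^2$, every such element is a power of $Z$, so $Z(H)=\langle Z\rangle=\mu_4$. Hence $\Inn(H)\cong H/Z(H)\cong H/\mu_4\cong \Z/4\oplus\Z/4=U$, which supplies the injection $U\hookrightarrow \Aut(H,\mu_4)$ in the claimed sequence.

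Next, because $\mu_4=Z(H)$ is characteristic, every $\phi\in\Aut(H,\mu_4)$ descends to an automorphism $\bar\phi$ of $U=H/\mu_4$, giving a homomorphism $\Aut(H,\mu_4)\to\GL(U)=\GL_2(\Z/4)$. The key point is that the commutator induces a nondegenerate alternating pairing $\langle\,,\rangle\colon U\times U\to\mu_4$, which in the basis $\bar X,\bar Y$ is (up to the sign convention) the standard symplectic form $\langle(a_1,b_1),(a_2,b_2)\rangle=a_1b_2-a_2b_1$, as follows from the computation $[X^{a_1}Y^{b_1},X^{a_2}Y^{b_2}]=Z^{a_1b_2-a_2b_1}$. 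Since $\phi$ fixes $\mu_4$ pointwise it preserves this pairing, so $\bar\phi$ lies in $\mathsf{Sp}_2(\Z/4)=\SL_2(\Z/4)$; for $2\times 2$ matrices preserving the symplectic form is the same as having determinant one.

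I would then identify the kernel of $\phi\mapsto\bar\phi$. A kernel element fixes $Z$ and acts trivially on $U$, so necessarily $\phi(X)=XZ^i,\ \phi(Y)=YZ^j$ for some $i,j$; conversely any such assignment respects the presentation (for instance $(XZ^i)^4=X^4Z^{4i}=Z^2$ as $Z^4=1$), so these are precisely the kernel elements. A short calculation shows conjugation by $X^aY^b$ sends $X\mapsto XZ^{-b}$ and $Y\mapsto YZ^{a}$; as $(a,b)$ ranges over $U$ so does $(-b,a)$, so every kernel element is inner. Thus the kernel equals $\Inn(H)=U$ and $\Out(H,\mu_4)\hookrightarrow\SL_2(\Z/4)$.

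Finally, surjectivity is the step I expect to be the main obstacle, since it requires lifting each $M=\left(\begin{smallmatrix}p&q\\r&s\end{smallmatrix}\right)\in\SL_2(\Z/4)$ to an honest automorphism. I would try the naive lift $\phi(X)=X^pY^r,\ \phi(Y)=X^qY^s,\ \phi(Z)=Z$ and verify it respects every defining relation. The commutator relation is immediate: $[\phi(X),\phi(Y)]=Z^{\det M}=Z$, precisely because $\det M=1$. The delicate relations are $\phi(X)^4=\phi(Y)^4=Z^2$; a direct expansion gives $(X^pY^r)^4=Z^{2(p+r-pr)}$, and $p+r-pr\equiv 1\pmod 2$ holds exactly when $p,r$ are not both even, which is forced by $\det M=1$ (both even would make $\det M$ even). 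So the naive lift satisfies all relations; being invertible on $U$ it is surjective modulo the center, hence surjective, hence bijective on the finite group $H$, and it fixes $Z$. This establishes surjectivity onto $\SL_2(\Z/4)$, completing the exact sequence and yielding $\Out(H,\mu_4)\cong\SL_2(\Z/4)$.
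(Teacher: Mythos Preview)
Your proof is correct and follows essentially the same strategy as the paper's: identify $\Inn(H)$ with $U$, show the induced action on $U$ preserves the commutator pairing and hence lands in $\SL_2(\Z/4)$, and establish surjectivity by lifting each matrix via $X\mapsto X^pY^r,\ Y\mapsto X^qY^s$ and checking the order-$8$ condition $(X^pY^r)^4=Z^2$. You supply more detail than the paper (which leaves the kernel identification and the power computation to the reader), but the argument is the same.
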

Here, $\Aut(H, \mu_4)$ denotes the automorphism group of $H$ which
acts trivially on the center $\mu_4$.  Note that the size of
$\SL_2(\Z/4)$ is 48.
\begin{proof}
  It is clear that the inner automorphism group is $H/\mu_4 \cong U$.
  This acts by characters $U \rightarrow \mu_4$, fixing the center.
  Furthermore, the action of the outer automorphism group on $H$
  fixing the center descends to an action on $H/Z \cong U$, and this
  action must preserve commutators since $[H,H] \subset Z$. If we
  consider $x \wedge y \mapsto [x,y]$ for $x, y \in U \cong H/Z$ to be
  a volume form, then we obtain an embedding
  $\Out(H, \mu_4) \hookrightarrow \SL_2(\Z/4)$.  We have to show this is
  surjective.  If $X, Y$ are lifts of generators of $U$ to $H$, they
  have order $8$, and it follows that the same is true for $X^a Y^b$
  whenever at least one of $a, b$ is odd.  As a result, we see that,
  for any two elements $X', Y' \in H$ such that $[X',Y'] = \mu_4$, the
  map $X \mapsto X', Y \mapsto Y'$ must yield an automorphism of $H$
  fixing $\mu_4$.
\end{proof}

As a consequence, the action of $\Out(H, \mu_4)$ on $\PP U$ is a 
projective representation
of $\SL_2(\Z/4)$, which we will call the Weil representation on $U$.

Let $\cS_0 \subset \cS$ be the subset of tuples satisfying the assumptions
of Theorem \ref{sszthm}.
Next, we will describe explicitly the map $\cS_0 \rightarrow \PP U / \SL_2(\Z/4)$ and show that its kernel is $\Symm_3$.  More precisely, we show that this
map factors as follows.  Let $K \subset \SL_2(\Z/4)$ be the kernel of the
canonical surjection $\SL_2(\Z/4) \twoheadrightarrow \SL_2(\Z/2)$ (note
that $K \cong (\Z/2)^{\times 3}$). Then, we prove the following
\begin{claim}
The map $(\alpha, \beta, \gamma) \mapsto \omega$ given by
\eqref{sklypot} factors as follows:
\begin{equation}
\cS_0 \hookrightarrow \PP U / K \twoheadrightarrow \PP U / \SL_2(\Z/4).
\end{equation}
Moreover, using a natural isomorphism $\Symm_3 \cong \SL_2(\Z/2)$, the
action of $\Symm_3$ on $\cS$ is identified with the action of
$\SL_2(\Z/2)$ on $\PP U / (\Z/2)^{\times 3}$.
\end{claim}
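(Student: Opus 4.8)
The plan is to make the Heisenberg and Weil actions completely explicit and then compare them, generator by generator, with the sign-twisted permutation action of $\Symm_3$ on $(\alpha,\beta,\gamma)$. First I would fix the standard model of the unique irreducible (Lemma \ref{svn}) Heisenberg representation: a basis $v_0,\dots,v_3$ of $V$ indexed by $\Z/4$, with $X$ the shift $v_j\mapsto v_{j+1}$ and $Y$ the clock $v_j\mapsto \zeta^j v_j$ where $\zeta=\sqrt{-1}$, so that $[X,Y]=Z$ acts by $\zeta$. Since $Z^{\otimes 4}$ acts by $\zeta^4=1$, the space $U=(V^{\otimes 4})^H$ is nonzero; using $Y^{\otimes 4}$-invariance and averaging over $X^{\otimes 4}$, it acquires the explicit orbit-sum basis indexed by the $\Z/4$-orbits of tuples $(j_1,\dots,j_4)$ with $\sum_\ell j_\ell\equiv 0\pmod 4$, so $\dim U=16$. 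Using the explicit matrices of \cite{ssz} to relate the Sklyanin generators $x_i$ to the $v_j$, I would rewrite \eqref{sklypot} as a concrete vector $\omega(\alpha,\beta,\gamma)\in U$, which presents the map $\cS_0\to\PP U$ in coordinates.

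The injectivity of $\cS_0\to\PP U$ I would establish by a soft argument avoiding coordinates: from $[\omega]$ one recovers $W_2=\Image\Delta_2^\omega=R$, the space of relations, by second partial derivatives, and $R$ visibly determines $(\alpha,\beta,\gamma)$ (the symmetric $x_2x_3+x_3x_2$-component of the normalized element of $R$ with antisymmetric $x_0x_1$-part is $-\alpha$, and similarly for $\beta,\gamma$). Hence distinct parameters give distinct points of $\PP U$.

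To obtain the factorization $\cS_0\hookrightarrow\PP U/K\twoheadrightarrow\PP U/\SL_2(\Z/4)$, it then suffices to show that $K=\ker(\SL_2(\Z/4)\twoheadrightarrow\SL_2(\Z/2))$ fixes the Sklyanin locus pointwise. Because differentiation is equivariant, $W_2(g\cdot\omega)=g\cdot W_2(\omega)$, so this reduces to showing each $g\in K$ preserves $R$. Here I would use the $H$-isotypic decomposition of $V\otimes V$: the centre acts by $-1$, so $V\otimes V$ is built from the two-dimensional irreducibles of $H$ with this central character, each occurring with multiplicity two, and the isotypic labels are characters of the $2$-torsion $(\Z/2)^2\subset(\Z/4)^2$. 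Since every element of $K$ is the identity modulo $2$, it fixes each isotypic component; the remaining content is that the distinguished lines cut out by the Sklyanin $R$ in the two-dimensional multiplicity spaces are eigenlines for the residual $K$-action. This last verification is the heart of the matter and I expect it to be the main obstacle: it is exactly the assertion that the level-$4$ symmetries in $K$ act trivially on the potential, reflecting that the Sklyanin datum depends only on the level-$2$ structure, and carrying it out requires pinning down the $x_i$-to-$v_j$ dictionary precisely.

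Finally I would match the two $\Symm_3\cong\SL_2(\Z/2)$ actions on generators. The isomorphisms $c:x_1\mapsto x_2\mapsto x_3\mapsto x_1$ and $t:x_1\mapsto x_2,\,x_2\mapsto -x_1$ (fixing the other $x_i$) carry the relations of $A(\alpha,\beta,\gamma)$ to those of $A(\gamma,\alpha,\beta)$ and $A(-\beta,-\alpha,-\gamma)$; since $\Aut(\omega)=H$ and conjugation by $c$ (resp.\ $t$) sends $\Aut(\omega(\alpha,\beta,\gamma))$ to the automorphism group of the image potential, both normalise $\rho_0(H)$ and hence define classes $[c],[t]\in\Out(H,\mu_4)=\SL_2(\Z/4)$ whose Weil operators realise the parameter permutations on $[\omega]$. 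Equivariance of $\cS_0\to\PP U/K$ is then automatic, with $\Symm_3$ acting through $\overline{[c]},\overline{[t]}\in\SL_2(\Z/2)$; it remains to compute $[c],[t]$ from the matrices of \cite{ssz}, check that $\overline{[c]}$ and $\overline{[t]}$ have orders $3$ and $2$ and generate $\SL_2(\Z/2)$, and confirm that the resulting isomorphism $\Symm_3\cong\SL_2(\Z/2)$ intertwines the two actions. Combined with the previous paragraphs this gives the injection $\cS_0\hookrightarrow\PP U/K$ together with its $\SL_2(\Z/2)$-equivariance, and passing to quotients yields the factorization through $\PP U/\SL_2(\Z/4)$; the exceptional orbit $\alpha=\beta=\gamma=\pm\sqrt{-3}$, where the $\Symm_3$-action fails to be free, is handled separately using the extra $\Z/3$ isotropy recorded in Theorem \ref{sszthm}.
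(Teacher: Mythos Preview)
Your approach has a genuine gap at the heart: the assertion that $K$ fixes the Sklyanin locus pointwise in $\PP U$, i.e., that each lift $\psi_\phi$ of $\phi\in K$ preserves $R$, is false. If $\psi_\phi$ preserved $R$ it would preserve $\C\omega$ and hence lie in $\Aut(A)=\widetilde H$ (Theorem \ref{sszthm}); but every element of $\widetilde H$ acts on $H$ by an inner automorphism, so its image in $\Out(H,\mu_4)\cong\SL_2(\Z/4)$ is trivial, contradicting $\phi\neq 1$. Concretely, the lifts of $K$ are diagonal matrices $\mathrm{diag}(1,\varepsilon_1,\varepsilon_2,\varepsilon_3)$ with $\varepsilon_j\in\mu_4$, and applying such a matrix to $r_1=[x_0,x_1]-\alpha\{x_2,x_3\}$ gives $\varepsilon_1[x_0,x_1]-\alpha\varepsilon_2\varepsilon_3\{x_2,x_3\}$, which lies in $R$ only if $\varepsilon_1=\varepsilon_2\varepsilon_3$; combining with the analogous constraints from $r_2,r_3$ forces all $\varepsilon_j\in\{\pm1\}$ with product $1$, i.e., the element is already in $\rho_0(H)$ and trivial in $K$. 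So your isotypic argument cannot succeed: the ``distinguished Sklyanin lines'' are \emph{not} $K$-eigenlines.

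The paper's route is different and avoids this trap. It does not produce a well-defined map $\cS_0\to\PP U$ at all; instead it parameterises, for each $(\alpha,\beta,\gamma)$, the Heisenberg representation by auxiliary parameters $\theta'_0,\dots,\theta'_3$ satisfying $\alpha=(\theta'_0\theta'_1/\theta'_2\theta'_3)^2$ etc., so that the intertwiner $\psi$ to the fixed model is the diagonal matrix $\mathrm{diag}(\theta_i/\theta'_i)$. The point is that the \emph{ambiguity} in choosing the $\theta'_i$ (after normalising $\theta'_0=1$) is a group of diagonal $\varepsilon$-matrices whose quotient by $\rho_0(H)\cap\{\text{diagonals}\}$ is exactly $K$; thus the map $(\alpha,\beta,\gamma)\mapsto[\psi^{-1}(\omega)]$ is well defined only in $\PP U/K$, and the $K$ appears as the indeterminacy of the construction rather than as a stabiliser. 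Injectivity is then read off directly from coefficients because $\psi$ is diagonal: $\alpha$ is recovered as the ratio of the $\{x_0,x_1\}\{x_2,x_3\}$- and $[x_0,x_1][x_2,x_3]$-coefficients of $\psi^{-1}(\omega)$. For the $\Symm_3\cong\SL_2(\Z/2)$ identification the paper also argues more structurally than your generator-by-generator proposal: all Weil intertwiners are monomial, giving a map $\Aut(H,\mu_4)\to\Symm_4$ whose restriction to $\Inn(H)$ hits the Klein four-group, so one obtains $\SL_2(\Z/2)\to\Symm_4/(\Z/2\times\Z/2)\cong\Symm_3$ without computing $[c],[t]$ individually.
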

The theorem follows immediately from the claim.

To prove the claim, we recall from \cite[\S 2]{ssz} explicit formulas
for $\rho_0(X), \rho_0(Y)$.  Let
$\theta_0, \theta_1, \theta_2, \theta_3 \in \C^\times$ be numbers such that
\begin{equation}\label{thetdfn}
  \alpha_0 = \bigl( \frac{\theta_0 \theta_1}{\theta_2 \theta_3} \bigr)^2, \quad \beta_0 = - \bigl( \frac{\theta_0 \theta_2}{\theta_1 \theta_3} \bigr)^2, \quad \gamma_0 = - \bigl( \frac{\theta_0 \theta_3}{\theta_1 \theta_2} \bigr)^2.
\end{equation}
(The numbers $\theta_i$ are in fact Jacobi's four theta-functions
associated with an elliptic curve valued at a point of that curve,
which may be used to give a geometric definition of $A_0$.  We will not
need this fact.)  Fix $i = \sqrt{-1} \in \C$. We have: 
\begin{equation}\label{rho0}
\rho_0(X) = \begin{pmatrix} 0 & 0 & 0 & i \frac{\theta_3}{\theta_0} \\
0 & 0 & -i \frac{\theta_2}{\theta_1} & 0 \\ 0 & i \frac{\theta_1}{\theta_2} & 0 & 0 \\ i \frac{\theta_0}{\theta_3} & 0 & 0 & 0 
\end{pmatrix}, \quad \rho_0(Y) = \begin{pmatrix} 0 & 0 & -i \frac{\theta_2}{\theta_0} & 0 \\ 0 & 0 & 0 & -\frac{\theta_3}{\theta_1} \\ i \frac{\theta_0}{\theta_2} & 0 & 0 & 0 \\ 0 & \frac{\theta_1}{\theta_2} & 0 & 0 \end{pmatrix}.
\end{equation}
Then, if $(\alpha, \beta, \gamma) \in \cS_0$, for any choice of
$\theta_0', \theta_1', \theta_2', \theta_3'$ satisfying the version of
\eqref{thetdfn} for $(\alpha, \beta, \gamma)$, we may define the
representation $\rho$ using \eqref{rho0} with primed thetas.  It is
easy to see that an intertwiner $\psi$ carrying $\rho_0$ to $\rho$ is
given by
\begin{equation}
  \psi = \begin{pmatrix} \theta_0 / \theta'_0 & 0 & 0 & 0 \\ 0 & \theta_1 / \theta'_1 & 0 & 0 \\ 0 & 0 & \theta_2/\theta'_2 & 0 \\ 0 & 0 & 0 & \theta_3/\theta'_3 \end{pmatrix}.
\end{equation}
As a consequence, we obtain a vector $\psi^{-1}(\omega)$ in $U$. However,
the construction involved a choice of the $\theta_j'$, so it is not yet
well-defined.  First, nothing is affected by multiplying all the $\theta_j'$ by
the same scalar, since everything only involves ratios of the same number
of the thetas. So let us assume that $\theta_0' = 1$.  Any other choice
of $\theta_1', \theta_2', \theta_3'$ must differ by a transformation
\begin{equation} \label{vemat}
\begin{pmatrix} 1 \\ \theta_1' \\ \theta_2' \\ \theta_3' \end{pmatrix} \mapsto \begin{pmatrix} 1 & 0 & 0 & 0 \\ 0 & \varepsilon_1 & 0 & 0 \\ 0 & 0 & \varepsilon_2 & 0 \\ 0 & 0 & 0 & \varepsilon_3 \end{pmatrix} \begin{pmatrix} 1 \\ \theta_1' \\ \theta_2' \\ \theta_3' \end{pmatrix},
\end{equation}
where $\varepsilon_i \in \mu_4$, and
$\varepsilon_1 \varepsilon_2 \varepsilon_3 = \pm 1$.  First of all, in
the case that $\varepsilon_j \in \{\pm 1\}$ for all $j$ and
$\varepsilon_1 \varepsilon_2 \varepsilon_3 = 1$, then the matrix
$\begin{pmatrix} 1 & 0 & 0 & 0 \\ 0 & \varepsilon_1 & 0 & 0 \\ 0 & 0 &
  \varepsilon_2 & 0 \\ 0 & 0 & 0 & \varepsilon_3 \end{pmatrix}$
is already in $\rho_0(H)$ (and $\rho(H)$, so it will not affect
$\psi^{-1}(\omega)$.  Factoring the group of $\varepsilon$-matrices
\eqref{vemat} by this subgroup leaves a group isomorphic to
$(\Z/2)^{\times 3}$.  Conjugating $\rho_0$ by the action of this group
is easily verified to send $\rho_0$ to $\rho_0 \circ K$, where
$K \subset \SL_2(\Z/4) \cong \Out(H)$ is the kernel of
$\SL_2(\Z/4) \twoheadrightarrow \SL_2(\Z/2)$.  After all, given any
$h \in H$, the elements $i^k\rho_0(h X^{2\ell} Y^{2m})$ for
$k, \ell, m \in \Z$ are exactly those that differ from $h$ by a
diagonal matrix.  Hence, we obtain a well-defined map from tuples
$(\alpha, \beta, \gamma) \in \cS_0$ to $\PP U / K$.

We claim that the resulting map $\cS_0 \rightarrow \PP U  / K$ is injective.
To see this, note that, since $\psi^{-1}$ is diagonal, we may recover
$\alpha$ from $\psi^{-1}(\omega)$ as follows: Write $\psi^{-1}(\omega)$
as a linear combination of terms of the form 
\begin{equation}
[x_i, x_j] \{x_k, x_\ell\}, \quad [x_i, x_j] \{x_k, x_\ell\}, \quad \{x_i, x_j\} [x_k, x_\ell], \quad \{x_i, x_j\} \{x_k, x_\ell\},
\end{equation}
where, as before, $\{x,y\} := xy+yx$ is the anticommutator.  We see that
\begin{equation}
\frac{\text{Coefficient in $\psi^{-1}(\omega)$ of } \{x_0, x_1\} \{x_2, x_3\}}{
\text{Coefficient in $\psi^{-1}(\omega)$ of } [x_0, x_1] [x_2, x_3]} = \alpha.
\end{equation}
This does not depend on rescaling $\omega$.  Similarly, we may recover
$\beta, \gamma$ from $\psi^{-1}(\omega)$.  This proves injectivity.

It remains only to show that the action of $\SL_2(\Z/2)$ is identified
with the action given in the theorem of $\Symm_3$ under an isomorphism
$\SL_2(\Z/2) \cong \Symm_3$.  Since $\Symm_3$ clearly acts by
automorphisms and faithfully so except at two points, this must be
true, but we give a direct argument.  The intertwining action
$\Aut(H,\mu_4) \rightarrow PGL(V)$ is easily seen to be given by
matrices which are products of diagonal matrices with permutation
matrices (just like all the formulas above).  

Thus, we have a map $P: \Aut(H,\mu_4) \rightarrow \Symm_4$ given by
modding by diagonal matrices.  On the other hand, we see that
$\rho(X^2), \rho(Y^2)$ are diagonal matrices, so that $P|_{\Inn(H)}$
descends to a map $Q: (\Z/2 \times \Z/2) \rightarrow \Symm_4$ under the
quotient $\Inn(H) \cong (\Z/4 \times \Z/4) \onto (\Z/2 \times \Z/2)$.
This map $Q$ is an isomorphism onto the normal subgroup
$\{(ab)(cd)\} \cong (\Z/2 \times \Z/2) \subset \Symm_4$, as is clear
from \eqref{rho0}.

As a result, the map $P$ itself descends to a map
$\overline{P}: \SL_2(\Z/2) \rightarrow \Symm_4 / (\Z/2 \times \Z/2) \cong
\Symm_3$.
This is the isomorphism sending an element of $\SL_2(\Z/2)$ to the
permutation induced on the three nonzero elements of
$\Z/2 \times \Z/2$: after all, for $\phi \in \SL_2(\Z/2)$ and
$w \in \Z/2 \times \Z/2$, we have
$Q(\phi(w)) = \overline{P}(\phi) Q(w)$.   This completes the proof.

% To proceed further, note that, since $H$ and hence $\Aut(H,\mu_4)$ act
% diagonally on $\PP U$, we have an induced $\Symm_4$-action on $U$ which
% commutes with the projective action of $\SL_2(\Z/4)$.

% First, let us write the multiplication on $\widetilde H$ in the following
% way.  Set $W := \Z/4 \times \Z/4$, and let
% $\beta: W \times W \rightarrow \C^x$ be a bilinear form, valued in
% fourth roots of unity, such that $\beta(w, w') \beta(w', w)^{-1}$ is a
% nondegenerate symplectic form. (For example,
% $\beta(x,y) = e^{\frac{\pi i}{2}}$ and
% $\beta(x,x) = \beta(y,x) = \beta(y,y) = 0$, for standard generators
% $x,y$ of $W$.)  We write $\widetilde H = (W \times \C^\times, \bullet)$
% with the following multiplication $\bullet$:
% \begin{equation}
% (w, z) \bullet (w', z') = (w+w', z \cdot z'\cdot \beta(w,w')).
% \end{equation}

\bibliographystyle{amsplain}
\def\cprime{$'$}
\providecommand{\bysame}{\leavevmode\hbox to3em{\hrulefill}\thinspace}
\providecommand{\MR}{\relax\ifhmode\unskip\space\fi MR }
% \MRhref is called by the amsart/book/proc definition of \MR.
\providecommand{\MRhref}[2]{%
  \href{http://www.ams.org/mathscinet-getitem?mr=#1}{#2}
}
\providecommand{\href}[2]{#2}

\end{document}